\documentclass[a4paper]{amsart}
\usepackage{amssymb,latexsym}
\usepackage[utf8x]{inputenc}
\usepackage{amsmath,amsthm}
\usepackage{amsfonts,mathrsfs}
\usepackage{cleveref}
\usepackage{enumerate,units}
\usepackage{url}

\parskip=3pt

\theoremstyle{plain}
\newtheorem{theorem}{Theorem}[section]
\newtheorem{corollary}[theorem]{Corollary}
\newtheorem{lemma}[theorem]{Lemma}
\newtheorem{proposition}[theorem]{Proposition}

\newtheorem{fact}[theorem]{Fact}
\newtheorem*{claim}{Claim}
\newtheorem*{theorem*}{Theorem A}

\theoremstyle{definition}
\newtheorem{definition}[theorem]{Definition}

\theoremstyle{remark}
\newtheorem{remark}{Remark}[section]

\numberwithin{equation}{section}
\newcommand{\forkindep}[1][]{%
  \mathrel{
    \mathop{
      \vcenter{
        \hbox{\oalign{\noalign{\kern-.3ex}\hfil$\vert$\hfil\cr
              \noalign{\kern-.7ex}
              $\smile$\cr\noalign{\kern-.3ex}}}
      }
    }\displaylimits_{#1}
  }
}

\newenvironment{claimproof}[1][\proofname]
  {%
    \proof[#1]%
  }
  {%
    \endproof%
  }

\newcounter{step}                   
    {\hfill $\clubsuit$             
     \vspace{7pt}\par}
  \newcommand\s[1][]{\if#1=1 \mathfrak{s}_p \else \mathfrak{s}_{p^{#1}}\fi}

\DeclareMathOperator{\acl}{acl}

\newcommand{\LPP}{\widetilde{\mathrm{PP}}}

\newcommand{\Pp}{{\mathcal P}}
\newcommand{\X}{{\mathcal X}}
\newcommand{\Y}{{\mathcal Y}}

\def\nsim{\raise.17ex\hbox{$\scriptstyle\sim$}}
\def\dprk{\mathrm{dp}}

\begin{document}
\title{The dp-rank of abelian groups}
\author{Yatir Halevi and Daniel Palac\'in}
\thanks{Both authors were partially supported by the  European Research Council grant 338821. The second author was also partially supported by MTM2017-86777-P}
\address{Einstein Institute of Mathematics, The Hebrew University of Jerusalem, Givat Ram 9190401, Jerusalem, Israel}
\email{yatir.halevi@mail.huji.ac.il}
\email{daniel.palacin@mail.huji.ac.il}
\keywords{abelian group; one-based; vc-density; dp-rank}
\subjclass[2010]{03C60, 03C45, 20K99}

\begin{abstract}
An equation to compute the dp-rank of any abelian group is given. It is also shown that its dp-rank, or more generally that of any one-based group, agrees with its Vapnik–Chervonenkis density. Furthermore, strong abelian groups are characterised to be precisely those abelian groups $A$ such that there are only finitely many primes $p$ such that the group $A/pA$ is infinite and for every prime $p$, there are only finitely many natural numbers $n$ such that $(p^nA)[p]/(p^{n+1}A)[p]$ is infinite.

Finally, it is shown that an infinite stable field of finite \rm{dp}-rank is algebraically closed.
\end{abstract}

\maketitle

\section{Introduction}

A complete classification of abelian groups up to elementarily equivalence was provided by Szmielew \cite{Szm}, who determined a list of group-theoretic invariants that characterise the first-order theory of abelian groups. The purpose of this paper is, using the classification given by Szmielew, to compute the $\dprk$-rank of any abelian group. Furthermore, we show that it is uniquely determined by a suitable semilattice of its subgroups. 

The $\dprk$-rank is a model-theoretic rank for dependent theories, originating from Shelah's work on strongly dependent theories \cite{StrDep}, which bounds the diversity between realizations of a type. Originally, it was introduced as an analogue of weight in stable theories and it was then used to obtain a notion of minimality inside dependent theories. Since then, the $\dprk$-rank has been proven to play an important role, not only in the understanding of dependent theories but also in strengthening the connections between dependent theories and combinatorics. Of particular interest, is the relation between the $\dprk$-rank and the Vapnik–Chervonenkis density (vc-density). The vc-density has been studied for quite some time in combinatorics, computational geometry, statistics and machine learning. In model theory it relates to the classical problem of counting types and was furthermore studied extensively in many specific cases in \cite{ADHMS1,ADHMS}.

Concerning abelian groups, it was proven in \cite{ADHMS} that the vc-density is the least integer $d$, if it exists, such that the intersection of $n>d$ many definable subgroups  has finite index in a sub-intersection of $d$ many. In other words, the vc-density agrees with the breadth of the semilattice of commensurable classes of definable subgroups, see Section 2. Our first result is to show that this holds for any one-based group, a larger family of groups which includes abelian-by-finite groups as well as left modules over a unital ring. Furthermore, these two notions coincide with the $\dprk$-rank. This is Theorem \ref{T:DP-VC-Br}. As a consequence, we deduce that the vc-density of the theory of a one-based group has linear growth, answering a question from \cite{ADHMS}, see Corollary \ref{C:VC-Lin}.

We use this characterisation of the $\dprk$-rank to give an equation for the \rm{dp}-rank of abelian groups. For this, we recall the definition of the Szmielew invariants for an abelian group $A$, written additively. For a prime $p$ and a positive integer $n$, set
\begin{align*} 
\mathrm{U}(p, n; A) & = \left|( p^n A)[p] / (p^{n+1} A)[p] \right| , \\
\mathrm{D}(p, n; A) & = \left| (p^n A) [p] \right|, \\
{\rm Tf} (p, n; A) & = \left| p^n A / p^{n+1} A \right|, \\
{\rm Exp}(p, n; A) & = \left| p^n A \right| .
\end{align*}
The notation $\mathrm{U}$ stands for Ulm, who introduced this invariant, while $\mathrm{D}, {\rm Tf} $ and ${\rm Exp}$ stand for divisible, torsion-free and exponent respectively. It is clear that these invariants are in fact invariants of the theory of $A$. To better visualize these invariants, recall that every abelian group is elementary equivalent to a direct sum of abelian groups of the sort 
\[\mathbb{Z}(p^n),\, \mathbb{Z}_{(p)},\, \mathbb{Z}(p^\infty)\, \text{ and } \mathbb{Q},\]
where $p$ is a prime and $n$ is a natural number, by the work of Szmielew. A countable direct sum of these groups is called a Szmielew group, see Section \ref{ss:szmielew groups}.  For our purposes, we shall consider another closely related family of invariants of the theory. Following Eklof and Fischer \cite{EkFi}, define 
$$
\mathrm{D}(p;A) = \lim_{n\rightarrow\infty} \mathrm{D}(p, n; A) ,
$$
which determines the number of copies of the Pr\"ufer group $\mathbb Z(p^\infty)$ in any $\aleph_1$-saturated abelian group $A$ and 
$$
{\rm Tf}(p;A) = \lim_{n\rightarrow\infty} {\rm Tf} (p, n; A),
$$
which determines the number of copies of the torsion-free group $\mathbb Z_{(p)}$ in any $\aleph_1$-saturated abelian group $A$. 

Due to the infinitary behavour of the dp-rank, we are mainly interested in controlling when these groups appear infinitely many times. Thus, we denote by $\mathrm{D}_{\ge\aleph_0}(A)$ the set of primes $p$ such that $\mathrm{D}(p;A)$ is infinite and similarly $\mathrm{Tf}_{\ge\aleph_0}(A)$ denotes the set of primes $p$ that $\mathrm{Tf}(p;A)$ is infinite. 
Concerning the number of copies of the groups $\mathbb Z(p^n)$ in $A$, which are encoded by the Ulm invariant, using the notation from \cite{ADHMS} we write $\mathrm{U}_{\ge\aleph_0}(p;A)$ for the set of natural numbers $n$ such that $\mathrm{U}(p,n;A)$ is infinite. Furthermore, we denote by $\mathrm{U}_{\ge\aleph_0}(A)$ the set of primes $p$ such that $\mathrm{U}(p,n;A) >1$ for infinitely many $n$. In other words, the set $\mathrm{U}_{\ge \aleph_0}(A)$ is precisely the set of primes $p$ such that $A$ has unbounded $p$-length, {\it i.e.} there is no finite upper bound on the order of the elements of $p$-power torsion which are not divisible by $p$. 
It is clear that the sets $\mathrm{D}_{\ge\aleph_0}(A), \mathrm{Tf}_{\ge\aleph_0}(A) , \mathrm{U}_{\ge\aleph_0}(p;A)$ and $\mathrm{U}_{\ge\aleph_0}(A)$ are also invariants of the theory of $A$. 

As announced above, the main result of the paper is the following theorem in which we compute the $\dprk$-rank of any abelian group. For ease of presentation, we break the equation into cases. The finite exponent group case was essentially computed in \cite[Theorem 5.1]{ADHMS}. Following their notation, for a set $I$ of positive integers, we denote by $d(I)$ the size of the maximal subset $I_0$ of $I$ such that for every two distinct elements $i,j$ of $I_0$ we have $|j-i|\ge 2$.

\begin{theorem}\label{T:Main}
Let $A$ be an abelian group of finite $\dprk$-rank. Then one of the following holds:

\begin{enumerate}
\item $A$ is torsion-free and
$$
\dprk(A)= \max \left\{ 1, |\mathrm{Tf}_{\ge\aleph_0}(A)| \right\}.
$$
\item $A$ is of finite exponent and 
$$
\dprk(A)= \sum_p d \left(\mathrm U_{\ge\aleph_0}(p;A) \right).
$$ 

\item $A$ has unbounded exponent, torsion but has finite $p$-length for every prime $p$ and
$$
\dprk(A)=\sum_p d \left(\mathrm U_{\ge\aleph_0}(p;A) \right) + \max \left\{ 1, |\mathrm{Tf}_{\ge\aleph_0}(A)| , |\mathrm{D}_{\ge\aleph_0}(A)| \right\}.
$$ 
\item $A$ has unbounded $p$-length for every prime $p$ and
$$
\dprk(A)=\sum_p d \left(\mathrm U_{\ge\aleph_0}(p;A) \right) + |\mathrm{U}_{\ge\aleph_0}(A)| + \max \left\{|\mathrm{Tf}_{\ge\aleph_0}(A)| , |\mathrm{D}_{\ge\aleph_0}(A)| \right\}.
$$
\end{enumerate}
\end{theorem}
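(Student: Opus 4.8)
The plan is to leverage Theorem~\ref{T:DP-VC-Br}, which identifies $\dprk(A)$ with the breadth of the semilattice $\mathcal{L}$ of commensurability classes of definable subgroups. Since every quantity appearing in the statement is an invariant of the theory of $A$, I am free to replace $A$ by any elementarily equivalent group; I would work on an $\aleph_1$-saturated Szmielew group, on which $\mathrm{D}(p;A)$ and $\mathrm{Tf}(p;A)$ literally count the copies of $\mathbb{Z}(p^\infty)$ and $\mathbb{Z}_{(p)}$ appearing as summands. The theorem then reduces to a purely combinatorial evaluation of the breadth of $\mathcal{L}$, where a family of subgroups contributes to the breadth precisely when it is irredundant, i.e.\ no member is commensurable with the intersection of the others.

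First I would make $\mathcal{L}$ explicit. By the Baur--Monk analysis of abelian groups every definable subgroup is commensurable with a positive-primitive one, and in a pure abelian group the pp-definable subgroups are generated, under finite intersection and sum, by the divisibility subgroups $p^m A$ and the torsion subgroups $A[p^n]$; the basic building blocks are thus $H^{(p)}_{m,n}=p^m A\cap A[p^n]$. Conditions attached to distinct primes are mutually commensurability-independent, so by primary decomposition the bounded-torsion part of the breadth splits as a genuine sum over primes, which is the source of the leading $\sum_p$ in every case, while the divisibility and socle directions must be handled globally.

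Next I would carry out the local analysis at a fixed prime $p$. The subgroups $H^{(p)}_{m,n}$ form a two-dimensional grid under inclusion, and passing to commensurability classes collapses every finite-index step, leaving only the infinite-index jumps, which are recorded exactly by the Szmielew invariants: the Ulm invariants $\mathrm{U}(p,n;A)$ control the bounded-torsion directions, while $\mathrm{Tf}(p,n;A)$ and $\mathrm{D}(p,n;A)$ control the torsion-free and divisible ones. Because consecutive rows of the grid overlap, two adjacent Ulm jumps can never be simultaneously irredundant; this adjacency constraint is precisely why the combinatorial function $d(\cdot)$, selecting a maximal $2$-separated subset, governs the bounded part, recovering $\sum_p d(\mathrm{U}_{\ge\aleph_0}(p;A))$ and matching \cite[Theorem~5.1]{ADHMS} in the finite-exponent case.

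Finally I would assemble the four cases according to the torsion structure and exponent of $A$. The remaining point is how the divisibility (torsion-free) and socle (divisible) directions interact. The divisibility subgroups $p^m A$ witnessing an infinite $\mathrm{Tf}(p;A)$ lie, in the inclusion order, above the stable socle subgroups $(q^nA)[q]$ witnessing an infinite $\mathrm{D}(q;A)$: being the socle of a divisible part, the latter is contained in $p^m A$ for all $m$ and all primes $p$. Hence any pair consisting of one divisibility subgroup and one socle subgroup forms a chain and is redundant, so these two families cannot be combined, although within each family distinct primes remain independent. This is exactly why one obtains $\max\{|\mathrm{Tf}_{\ge\aleph_0}(A)|,|\mathrm{D}_{\ge\aleph_0}(A)|\}$ rather than a sum, with the floor $1$ recording that any infinite group has $\dprk\ge 1$, while the extra summand $|\mathrm{U}_{\ge\aleph_0}(A)|$ in case (4) records the new independent direction created once the $p$-length is unbounded. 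I expect the main obstacle to be exactly this interaction analysis: one must produce matching lower bounds by writing down explicit irredundant families on the saturated Szmielew model that realise the claimed value, and upper bounds by showing that any irredundant family can be rearranged so that adjacent Ulm generators and nested divisibility/socle generators collapse, so that the breadth is neither over- nor under-counted.
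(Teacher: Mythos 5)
Your global strategy is the paper's own: reduce to a strict Szmielew group, identify $\dprk(A)$ with the breadth of the semilattice of commensurability classes of p.p.-definable subgroups (Theorem \ref{T:DP-VC-Br}, Corollary \ref{C:breadth=dp}), and evaluate that breadth. The gap sits exactly at the step you yourself flag as the main obstacle, and the mechanism you propose for it does not work. You argue that the $\max\{|\mathrm{Tf}_{\ge\aleph_0}(A)|,|\mathrm{D}_{\ge\aleph_0}(A)|\}$ arises because the socle subgroups witnessing $\mathrm{D}$ are contained in the divisibility subgroups witnessing $\mathrm{Tf}$, "hence any pair consisting of one divisibility subgroup and one socle subgroup forms a chain and is redundant". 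Two problems. First, nesting of two \emph{chosen} witness families can never prove the upper bound: breadth quantifies over \emph{all} families of p.p.-definable subgroups, so you must show that no family whatsoever (including subgroups mixing torsion and divisibility conditions, such as $p^aA+A[p^b]$) exceeds the claimed depth. Second, the general nesting principle you invoke is simply false once Ulm directions are present: in $A=\mathbb{Z}(p)^{(\omega)}\oplus\mathbb{Z}_{(p)}^{(\omega)}$ the divisibility subgroup $pA=\{0\}\oplus p\mathbb{Z}_{(p)}^{(\omega)}$ and the socle subgroup $A[p]=\mathbb{Z}(p)^{(\omega)}\oplus\{0\}$ intersect trivially and are both infinite, so by Proposition \ref{P:inpattern-in-1based} they form an inp-pattern of depth $2$ — they are nowhere near a chain. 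What actually forces the $\max$ is a dichotomy at the level of \emph{formulas}, applied to an arbitrary inp-pattern: writing $A=A_{\Pp}\oplus A_{\X}\oplus A_{\Y}$ (Ulm, torsion-free, divisible-torsion parts), any p.p. formula containing a nontrivial torsion condition $mx=0$ defines $\{0\}$ on $A_{\X}$, while any p.p. formula with no torsion condition defines all of $A_{\Y}$; hence every inp-pattern for $A$ is already an inp-pattern for $A_{\Pp}\oplus A_{\X}$ or for $A_{\Pp}\oplus A_{\Y}$. This is the Claim in Section \ref{s:main}, it is what your "rearrangement" step would have to reprove, and it does not follow from nesting of canonical generators.

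There is a matching gap on the lower-bound side. You derive the additivity of $\sum_p d(\mathrm{U}_{\ge\aleph_0}(p;A))$ with the remaining terms from "primary decomposition", i.e.\ independence across distinct primes, but the Ulm term must add to the $\max$ term even when both live at the \emph{same} prime: the example above, $\mathbb{Z}(p)^{(\omega)}\oplus\mathbb{Z}_{(p)}^{(\omega)}$, has dp-rank $2$ although only the prime $p$ is involved. Producing such patterns in general is genuine work, not bookkeeping: the paper constructs them explicitly (subgroups of the form $\varphi_i(B)\oplus(C[r]+rC)$ and $B\oplus(C[r]+\psi_i(C))$, where $r$ is the exponent of the bounded part, with finite-exponent variants via $(mC)[m]$), chooses the witnesses for unbounded-length primes to be of unbounded exponent when paired with $\mathrm{Tf}$ directions and of finite exponent when paired with $\mathrm{D}$ directions (Proposition \ref{P:Torsion+UnbddLength}, Lemma \ref{L:unbounded-fin-exponents}), and relies on Lemma \ref{L:UnbddExp+Dpmin} to discard dp-minimal unbounded-exponent summands — which is precisely where the floor $1$ in cases (1) and (3), and its absence in case (4), come from. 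None of these ingredients appears in your outline, so as it stands the proposal establishes neither the upper nor the lower bound in cases (3) and (4).
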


Of course, one can reformulate the statement above to get a single equation. For those that prefer to be concise, we write it bellow. Before proceeding we introduce some further notation. 

Set $\epsilon_\mathrm{U}$ equal to $0$ if the set $\mathrm{U}_{\ge\aleph_0}(A)$ is empty and $1$ otherwise, and $\epsilon_\mathrm{Exp}$ is equal to $0$ if $A$ has finite exponent and $1$ otherwise. Furthermore, define $\epsilon_\mathrm{Tf}$ to be $0$ if for every prime $p$ we have that $\mathrm{Tf}(p;A) = 0$ and $1$ in other case. Finally, define $\epsilon_\mathrm{D}$ likewise. Therefore, putting all this together, for any abelian group $A$ we obtain the following equation:
\begin{align*}
\dprk(A)  = & \ \sum_p d\big(\mathrm{U}_{\ge\aleph_0}(p;A)\big) + |\mathrm{U}_{\ge\aleph_0}(A)|  +  (1-\max\{\epsilon_{\mathrm{U}} , \epsilon_\mathrm{Tf} , \epsilon_\mathrm{D}\} )\cdot \epsilon_\mathrm{Exp}\\
& +  \max \{ \epsilon_\mathrm{Tf} , \epsilon_\mathrm{D} \} \cdot \max \left\{ 1 - \epsilon_{\mathrm{U}}, |{\rm Tf}_{\ge\aleph_0}(A)|,  |{\rm D}_{\ge\aleph_0}(A)| \right\}.
\end{align*}

The proof of the main theorem is preceded, in Section \ref{s:cases}, by a study of the different components building a Szmielew group: we give equations for their \rm{dp}-rank and characterize the inp-patterns needed to witness that \rm{dp}-rank.
In Section \ref{s:char-strong} we characterize strong abelian groups and abelian groups of finite \rm{dp}-rank. The latter was already obtained in \cite[Theorem 5.23]{ADHMS}. As for the former, we show that an abelian group is strong if and only if there is only a finite number of primes $p$ such that the group $A/pA$ is infinite and only a finite number of pairs $(p,n)$ such that the invariant $\mathrm U(p,n;A)$ is infinite. Finally, Section \ref{s:main} is devoted to proving the main theorem. 

In Section \ref{s:last}, the last section of the paper, we point out that the behaviour of the $\dprk$-rank is completely different whenever the given group carries some additional structure. In particular, we give an example of a divisible torsion-free abelian group with additional structure whose theory is $\omega$-stable but does not have finite \rm{dp}-rank. Furthermore, and in the spirit of abelian groups with additional structure, we end the paper by showing that infinite stable fields of finite \rm{dp}-rank are algebraically closed.

Before concluding the introduction, we would like to remark that Proposition \ref{P:inpattern-in-1based} was used by the first author and Assaf Hasson to describe the dp-rank of ordered abelian groups \cite{HaH1}. We thank him for going over a first version of the proof of that proposition. We also thank the referees for several helpful comments and suggestions, which improved the final version.

\section{Preliminaries on the theory of abelian groups}

In this section we recall the basics of the first-order theory of abelian groups needed for the paper. For a detailed exposition we refer to \cite[Appendix A.2]{Hodges}, which we also use as a main reference. 

Before proceeding, we discuss some notational conventions. We write abelian groups additively. If $A$ is an abelian group, then the subgroup of elements of order $n$ is denoted by $A[n]$, which is of course defined by the formula $nx=0$. As usual, the subgroup of $n$-powers of $A$ is denoted by $nA$.

\subsection{Quantifier elimination} We construe abelian groups as structures in the language of abelian groups or interchangeably in the language of $\mathbb Z$-modules, since both languages are definitionally equivalent. Thus, we can use the Baur-Monk quantifier elimination given in \cite{Baur,Monk} for left modules over unital rings to describe definable sets in abelian groups, see \cite[Corollary A.1.2]{Hodges}. Namely, any formula is equivalent to a boolean combination of positive primitive formulas; recall that a formula $\varphi(x_1,\ldots,x_m)$ is {\em positive primitive}, p.p. for short, if it is equivalent to a formula of the form 
$$
\exists  y_1\ldots y_n \bigwedge_k \left(\sum_{i=1}^{m} \lambda_i x_i + \sum_{j=1}^{n} \mu_j y_j = 0\right)
$$
for some $k\in \mathbb N$ and $\lambda_i,\mu_j\in\mathbb Z$. Given an abelian group $A$, it is easy to see that this formula defines an $\emptyset$-definable subgroup of $A^{m}$. Furthermore, we have an explicit description of p.p.-definable subgroups of an abelian group, see for instance \cite[Lemma A.2.1]{Hodges}.

\begin{fact}[Pr\"ufer]\label{F:prufer}
Every p.p.-definable proper subgroup of an abelian group is given by a finite conjunction of formulas of the form $mx = 0$ with $0 \le m$ and $p^n | p^m x$ with $0 \le m < n$.
\end{fact}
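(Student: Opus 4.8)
The plan is to reduce an arbitrary p.p.\ formula in one variable to a conjunction of single congruence and divisibility conditions by means of the Smith normal form, and then to translate each such condition into the stated shape by elementary number theory.

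First I would write the p.p.\ formula $\varphi(x)$ as a matrix equation. Collecting the existential variables into $\bar y = (y_1,\dots,y_n)$, the formula has the form $\exists \bar y\,(\bar\lambda x + M\bar y = 0)$, where $\bar\lambda\in\Zz^k$ is the column of coefficients of $x$ and $M\in\Zz^{k\times n}$ is the coefficient matrix of $\bar y$. Two kinds of manipulation leave the set defined by $\varphi$ unchanged: left multiplication of the augmented matrix $[\,\bar\lambda \mid M\,]$ by any $U\in GL_k(\Zz)$, which merely replaces the system of equations by a $\Zz$-equivalent one; and right multiplication of $M$ by any $V\in GL_n(\Zz)$, which amounts to the bijective substitution $\bar y \mapsto V\bar y$ of the quantified variables. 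Choosing $U$ and $V$ so that $UMV$ is in Smith normal form, with nonzero diagonal entries $d_1\mid\dots\mid d_r$, and writing $U\bar\lambda = (\nu_1,\dots,\nu_k)$, the system becomes $\bigwedge_{i\le r}(\nu_i x + d_i y_i = 0)\wedge\bigwedge_{r<i\le k}(\nu_i x = 0)$.

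Now the existential quantifier decouples. The variables $y_i$ with $i\le r$ each occur in a single equation, while the remaining variables occur in none; hence $\exists\bar y$ holds exactly when, for each $i\le r$, the equation $d_i y_i = -\nu_i x$ is solvable, that is when $d_i \mid \nu_i x$ in $A$ (meaning $\nu_i x \in d_i A$). Thus $\varphi(x)$ is equivalent to a finite conjunction of congruence conditions $\nu_i x = 0$ and divisibility conditions $d_i \mid \nu_i x$. The congruence conditions are already of the allowed form $mx=0$, so it remains to handle each divisibility condition.

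Finally I would decompose a single condition $d\mid cx$ using the prime factorisations $d=\prod_p p^{a_p}$ and $c=\prod_p p^{b_p}$. A B\'ezout argument shows that for any element $v$ of $A$ one has $v\in dA$ if and only if $v\in p^{a_p}A$ for every prime $p\mid d$; combined with the fact that multiplication by an integer prime to $p$ is invertible modulo $p^{a}A$, this yields that $d\mid cx$ is equivalent to $\bigwedge_{p\mid d}\,(p^{a_p}\mid p^{b_p}x)$. Conjuncts with $b_p\ge a_p$ hold identically and may be discarded, while those with $b_p<a_p$ are precisely of the form $p^{n}\mid p^{m}x$ with $0\le m<n$. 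Putting the three steps together expresses $\varphi$ as a finite conjunction of formulas of the two desired kinds, and properness guarantees the conjunction is non-trivial. The conceptual heart is the Smith normal form reduction, which collapses the simultaneous existential quantifier into independent one-equation conditions; the main point requiring care is the final number-theoretic translation, in particular justifying the coprime cancellation $p^{a}\mid p^{b}c'x \iff p^{a}\mid p^{b}x$ for $\gcd(c',p)=1$.
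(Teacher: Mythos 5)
Your proof is correct. Note that the paper itself offers no argument for this statement: it is recorded as a classical fact (Pr\"ufer), with a citation to Hodges, Lemma A.2.1, so there is no in-paper proof to compare against. Your route --- putting the coefficient matrix of the quantified variables into Smith normal form to decouple the existential quantifier into independent conditions $\nu_i x=0$ and $d_i\mid\nu_i x$, then splitting each $d\mid cx$ into prime-power conditions via B\'ezout and cancelling the part of $c$ coprime to $p$ --- is essentially the standard proof underlying the cited lemma, and all three steps check out: the $GL_k(\mathbb{Z})$ and $GL_n(\mathbb{Z})$ operations genuinely preserve the defined set, the Smith form makes each $y_i$ occur in at most one equation, and both the B\'ezout splitting ($v\in dA$ iff $v\in p^{a_p}A$ for all $p\mid d$) and the coprime cancellation ($p^a\mid p^bc'x$ iff $p^a\mid p^bx$, using $sc'+tp^a=1$) are valid in an arbitrary abelian group. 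The only points worth tidying are degenerate cases: if some $\nu_i=0$ the condition $d_i\mid\nu_i x$ holds identically and should be discarded, as should conjuncts with $d_i=\pm1$, and signs of the $\nu_i$ and $d_i$ must be normalised to meet the convention $0\le m$ and $0\le m<n$; none of these affects the substance of the argument.
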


\begin{remark}\label{R:p^n|p^mx}
The formula $p^n|p^mx$ with $0\le m<n$ defines in an abelian group $A$ the subgroup $p^{n-m}A+A[p^m]$.
\end{remark}

Using the Baur-Monk quantifier elimination it is easy to see that any abelian group $A$ has a {\em stable theory}, {\it i.e.} for every formula $\phi(\bar x;\bar y)$, in the language of groups, there is some $k$ such that there is no sequence $(\bar a_i,\bar b_i)_{i\le k}$ in $A$ for which $\phi(\bar a_i;\bar b_j)$ holds iff $i\le j$. In fact, other related notions can be also explained via the structure of the lattice of its p.p.-definable subgroups. 
Namely, an abelian group is {\em superstable} if and only if there is no infinite descending chain of p.p.-definable subgroups each of infinite index in its predecessor, and it is {\em $\omega$-stable} if and only if it satisfies the minimal chain condition on p.p.-definable subgroups. 

Finally we will use the following easy observation, implicitly, throughout the paper:
\begin{fact}\label{F:pp-commutes-with-sum}
Let $\varphi(x)$ define a p.p.-definable subgroup of an abelian group $A$. If $A=B\oplus C$ then $\varphi (A)=\varphi (B)\oplus \varphi (C)$.
\end{fact}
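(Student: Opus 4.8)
The plan is to reduce everything to the single observation that positive primitive formulas are preserved under group homomorphisms. Writing $\varphi(x)$ in the form $\exists y_1\ldots y_n \bigwedge_k \big(\lambda_k x + \sum_{j=1}^n \mu_{kj} y_j = 0\big)$, suppose $h\colon G \to H$ is a homomorphism of abelian groups and $G \models \varphi(a)$, witnessed by some $b_1,\ldots,b_n \in G$. Applying $h$ to each defining equation and using that $h$ is additive and $\mathbb Z$-linear gives $\lambda_k h(a) + \sum_j \mu_{kj} h(b_j) = h(0) = 0$, so $h(b_1),\ldots,h(b_n)$ witness $H \models \varphi(h(a))$. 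Thus $h$ maps $\varphi(G)$ into $\varphi(H)$, and I would record this preservation property first.

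Now fix the decomposition $A = B \oplus C$, with associated inclusions $\iota_B, \iota_C$ and projections $\pi_B, \pi_C$, all of which are homomorphisms. For the inclusion $\varphi(B) \oplus \varphi(C) \subseteq \varphi(A)$, I apply the preservation property to $\iota_B$ and $\iota_C$ to obtain $\varphi(B) \subseteq \varphi(A)$ and $\varphi(C) \subseteq \varphi(A)$, hence $\varphi(B) + \varphi(C) \subseteq \varphi(A)$; the sum is direct because $\varphi(B) \subseteq B$, $\varphi(C) \subseteq C$ and $B \cap C = 0$.

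For the reverse inclusion, take $a \in \varphi(A)$ and write $a = b + c$ with $b = \pi_B(a) \in B$ and $c = \pi_C(a) \in C$. Applying the preservation property to the projections yields $b = \pi_B(a) \in \varphi(B)$ and $c = \pi_C(a) \in \varphi(C)$, so $a \in \varphi(B) \oplus \varphi(C)$. Combining the two inclusions gives the claim.

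The only point requiring care — and hardly an obstacle — is the preservation lemma itself, which is immediate from the syntactic shape of a p.p. formula as an existentially quantified conjunction of linear equations: each ingredient (the equations, the conjunction, and the existential quantifier) is transported by an arbitrary homomorphism. Notably, no finiteness or structure-theoretic input about $A$ is used, so the same argument would apply verbatim to any finite (indeed arbitrary) direct sum decomposition.
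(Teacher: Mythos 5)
Your proof is correct: the paper states this Fact without proof, as an easy observation, and your argument is precisely the standard justification one would supply — p.p. formulas are preserved under homomorphisms, applied to the inclusions and projections of the decomposition $A = B \oplus C$, with the forward inclusion using that $\varphi(A)$ is a subgroup and directness following from $\varphi(B) \subseteq B$, $\varphi(C) \subseteq C$. Nothing is missing, and your closing remark that the argument extends to arbitrary direct sums is also accurate, since elements of a direct sum have finite support.
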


\subsection{Commensurability and p.p.-definable subgroups} Given a group $G$, we say that a subgroup $H$ of $G$ is contained up to finite index in another subgroup $N$ of $G$ if $H\cap N$ has finite index in $H$. Then $H$ and $N$ are {\em commensurable}, if $H$ and $N$ are contained up to finite index in $N$ and $H$ respectively. Equivalently, if $H\cap N$ has finite index in both $H$ and $N$. It is clear that containment up to finite index is a transitive relation among subgroups of $G$, and that commensurability is an equivalence relation. 

In the case of an abelian group $A$, these notions induce a natural join-semilattice structure on the set of commensurability classes of the p.p.-definable subgroups of $A$. Following the notation from \cite[Section 4.6]{ADHMS} we denote this semilattice by $\LPP(A)$, also see \cite{ADHMS} for more information. 

\begin{definition} The {\em breadth} of  a join-semilattice $L$ is the smallest integer $d$, if it exists, such that for all $x_1,\ldots,x_n$ in $L$ with $n>d$ there are positive integers $1\le i_1<\ldots<i_d \le n$ such that $x_1\wedge \dots \wedge x_n = x_{i_1}\wedge \dots \wedge x_{i_d}$. We denote by ${\rm breadth}(L)$ the integer $d$, if it exists.
\end{definition}
If $L$ and $L'$ are two distinct join-semilattice, we easily have 
$$
{\rm breadth}(L\times L') \le {\rm breadth}(L) + {\rm breadth}(L'),
$$
with equality if both $L$ and $L'$ have a greatest and a smallest element. In our situation, note that the semilattice $\LPP(A)$ of an abelian group $A$ has a greatest and smallest element. Moreover, it has breadth $d$ if and only if any finite intersection of subgroups of $A$ has finite index in some sub-intersection of $d$ many subgroups. 

Now, given two abelian groups $A_1$ and $A_2$, each p.p.-definable subgroup $H$ of $A_1 \oplus A_2$ can obviously be written as $(H\cap A_1) \oplus (H\cap A_2)$. Thus, we obtain a natural embedding  of semilattices from $\LPP(A_1\oplus A_2)$ into $\LPP(A_1)\times \LPP(A_2)$, which is a surjection when each $A_i$ is p.p.-definable in $A_1\oplus A_2$.  Therefore, we get the following fact, which corresponds to \cite[Lemma 4.18]{ADHMS}.

\begin{fact}\label{F:sum-of-breadth}
If $A_1$ and $A_2$ are two abelian groups, then
$$
\max_{i=1,2} \left\{ {\rm breadth} \left(\LPP(A_i) \right) \right\} \le {\rm breadth}\left(\LPP(A_1 \oplus A_2)\right) \le  \sum_{i=1,2}{\rm breadth} \left(\LPP(A_i)\right),
$$ with equality in the second inequation if both $A_1$ and $A_2$ are p.p.-definable in $A_1\oplus A_2$. 
\end{fact}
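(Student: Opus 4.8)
The plan is to derive both inequalities from a single monotonicity principle for the semilattice operation $\wedge$, which here is intersection of p.p.-definable subgroups taken up to commensurability. The principle is the following pair of observations. First, if $f\colon L\to M$ is an injective map of semilattices preserving $\wedge$, then ${\rm breadth}(L)\le{\rm breadth}(M)$: setting $d={\rm breadth}(M)$ and taking any $x_1,\dots,x_n\in L$ with $n>d$, the defining property of ${\rm breadth}(M)$ applied to $f(x_1),\dots,f(x_n)$ yields indices $i_1<\dots<i_d$ with $f(x_1)\wedge\dots\wedge f(x_n)=f(x_{i_1})\wedge\dots\wedge f(x_{i_d})$, and since $f$ preserves $\wedge$ and is injective the identity $x_1\wedge\dots\wedge x_n=x_{i_1}\wedge\dots\wedge x_{i_d}$ follows, so $d$ witnesses ${\rm breadth}(L)\le d$. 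Second, and dually, if $g\colon M\to L$ is a surjective map preserving $\wedge$, then again ${\rm breadth}(L)\le{\rm breadth}(M)$: given $y_1,\dots,y_n\in L$ with $n>d={\rm breadth}(M)$, lift each $y_i$ to some $x_i\in M$ with $g(x_i)=y_i$, collapse the $x_i$ using ${\rm breadth}(M)$, and push the resulting identity forward along $g$.

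For the lower bound I would apply the second observation to the composite $\LPP(A_1\oplus A_2)\to\LPP(A_i)$ sending $[H]\mapsto[H\cap A_i]$. This map preserves $\wedge$ because $(H\cap H')\cap A_i=(H\cap A_i)\cap(H'\cap A_i)$, and it is surjective since for any p.p.\ formula $\varphi$ one has $\varphi(A_1\oplus A_2)\cap A_i=\varphi(A_i)$ by Fact \ref{F:pp-commutes-with-sum}. Hence ${\rm breadth}(\LPP(A_i))\le{\rm breadth}(\LPP(A_1\oplus A_2))$ for $i=1,2$, giving the left-hand inequality. For the upper bound I would apply the first observation to the embedding $\iota\colon\LPP(A_1\oplus A_2)\hookrightarrow\LPP(A_1)\times\LPP(A_2)$, $[H]\mapsto([H\cap A_1],[H\cap A_2])$, already supplied in the preceding discussion; it is injective and preserves $\wedge$ by the same distributivity of intersection over the decomposition $H=(H\cap A_1)\oplus(H\cap A_2)$. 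This gives ${\rm breadth}(\LPP(A_1\oplus A_2))\le{\rm breadth}(\LPP(A_1)\times\LPP(A_2))$, and the stated product bound ${\rm breadth}(L\times L')\le{\rm breadth}(L)+{\rm breadth}(L')$ finishes the right-hand inequality.

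For the equality clause, when each $A_i$ is p.p.-definable in $A_1\oplus A_2$ the map $\iota$ is a surjection, hence a semilattice isomorphism, so ${\rm breadth}(\LPP(A_1\oplus A_2))={\rm breadth}(\LPP(A_1)\times\LPP(A_2))$; and because each $\LPP(A_i)$ has a greatest and a smallest element, the product bound is an equality, namely ${\rm breadth}(\LPP(A_1)\times\LPP(A_2))={\rm breadth}(\LPP(A_1))+{\rm breadth}(\LPP(A_2))$. I expect the only genuinely delicate point to be the bookkeeping in the two monotonicity observations, where one must keep straight that injectivity and surjectivity each force ${\rm breadth}$ of the \emph{smaller} structure to be bounded by that of the larger one, in accordance with the direction in which the collapse of a meet is transported along $f$; everything else reduces to the distributive identity for intersections together with Fact \ref{F:pp-commutes-with-sum}.
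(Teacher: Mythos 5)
Your proof is correct and takes essentially the same route as the paper: the decomposition $H=(H\cap A_1)\oplus(H\cap A_2)$, the embedding $\LPP(A_1\oplus A_2)\hookrightarrow\LPP(A_1)\times\LPP(A_2)$, the product bound ${\rm breadth}(L\times L')\le{\rm breadth}(L)+{\rm breadth}(L')$ with equality in the presence of greatest and smallest elements, and surjectivity of the embedding when each $A_i$ is p.p.-definable. The paper leaves the supporting details implicit (citing \cite{ADHMS}); your two monotonicity observations and the lower bound via the surjective projections $[H]\mapsto[H\cap A_i]$ simply make explicit what the paper's discussion intends.
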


In Section \ref{s:dprk,vcden}, we shall prove that the breadth of any abelian group, or more generally any one-based group, is precisely its $\dprk$-rank and consequently, obtain these properties for the dp-rank of abelian groups. 

\subsection{Szmielew groups}\label{ss:szmielew groups} Let $A$ be an abelian group and recall that we write it additively. We put $A_p$ for the {\em primary $p$-component} of the torsion subgroup of $A$. That is to say, $A_p$ consists of all elements of $A$ whose orders are powers of the prime $p$. Thus, the torsion subgroup is the direct sum of all the $A_p$ for $p$ prime. We denote by $A^{(\alpha)}$ the direct sum of $\alpha$ copies of $A$. 

An abelian group has {\em finite exponent} if there exists some positive integer $n$ such that $nA=\{0\}$; otherwise $A$ has {\em unbounded exponent}. For a prime $p$, we say that $A$ has {\em finite $p$-length} if there is a finite upper bound on the order of the elements of $A_p$ which are not divisible by $p$. Otherwise, we say that $A$ has unbounded $p$-length. Notice that $A$ has unbounded $p$-length if and only if so does any elementary equivalent group.

\begin{definition} A \emph{Szmielew group} is a countable abelian group of the form
$$
A=\bigoplus_p \left( \bigoplus_{n>0} \mathbb{Z}(p^n)^{(\alpha_{p,n})}\oplus (\mathbb{Z}_{(p)})^{(\beta_p)} \oplus \mathbb{Z}(p^\infty)^{(\gamma_p)}\right) \oplus \mathbb{Q}^{(\delta)},
$$
where each $\alpha_{p,n}, \beta_p, \gamma_p$ and $\delta$ are finite or countably infinite, $\mathbb Z(p^n)$ is the cyclic group of order $p^n$, $\mathbb{Z}_{(p)}$ is the additive group of all rational numbers with denominator not divisible by $p$, and $\mathbb{Z}(p^\infty)$ is the Pr\"ufer $p$-group. Such a Szmielew group is called {\em strict} if the following holds:
\begin{itemize}
\item $\delta$ is $0$ or $\omega$;
\item if either $\alpha_{p,n}\neq 0$ for infinitely many pairs $(p,n)$ or $\beta_p\neq 0$ or $\gamma_p\neq 0$ for some $p$, then $\delta=0$; and
\item for each prime $p$, if $\alpha_{p,n}\neq 0$ for infinite many $n$, then $\beta_p = \gamma_p = 0$.
\end{itemize}
Notice that the second bullet point asserts that if the part preceding `$\oplus \mathbb Q^{(\delta)}$' has unbounded exponent, then $\delta=0$.
\end{definition}
If $A$ is a Szmielew group, we have that $A$ has unbounded $p$-length if and only if $\alpha_{p,n}\neq 0$ for infinitely many $n$. Using the notation from the introduction we then have that:
$$
\mathrm{Tf}(p;A) = p^{\beta_p}, \ \mathrm{D}(p;A) = p^{\gamma_p} \ \text{ and } \ \mathrm{U}(p,n;A) = p^{\alpha_{p,n}}.
$$
Consequently, the sets $\mathrm{Tf}_{\ge \aleph_0}(A)$ and $\mathrm{D}_{\ge\aleph_0}(A)$ denote the collection of primes $p$ such that $\beta_p=\omega$ and $\gamma_p=\omega$ respectively, and similarly $\mathrm{U}_{\ge \aleph_0}(p;A)$ is the set of all $n$ such that $\alpha_{p,n}=\omega$. Furthermore, observe that $\mathrm{U}_{\ge\aleph_0}(A)$ corresponds to the set of primes $p$ such that $\alpha_{p,n}\neq 0$ for infinitely many $n$. That is, the set $\mathrm{U}_{\ge\aleph_0}(A)$ is the collection of primes $p$ such that $A$ has unbounded $p$-length. Since all these sets are invariant under elementarily equivalence, the following seminal result of Szmielew \cite{Szm} allows us to restrict our attention to  strict Szmielew groups for our purposes; we refer the reader to \cite[Appendix A.2]{Hodges} for a proof.
\begin{fact}
Any abelian group is elementarily equivalent to a unique strict Szmielew group, up to isomorphism.
\end{fact}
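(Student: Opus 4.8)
The plan is to deduce the statement from the Baur--Monk quantifier elimination: I would reduce elementary equivalence among abelian groups to the equality of a complete system of numerical invariants, verify that the Szmielew invariants $\mathrm{U},\mathrm{D},\mathrm{Tf},\mathrm{Exp}$ constitute such a system, and then check that the three strictness conditions select exactly one group realizing any admissible family of invariants.

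First I would make precise that, by Baur--Monk, the complete theory of an abelian group $A$ is determined by the indices $[H:K]$, each read off as a natural number or as the symbol $\infty$, as $K\le H$ range over pairs of p.p.-definable subgroups of $A$. Using Fact~\ref{F:prufer} and the subgroup description recalled in the remark following it, every such subgroup is a finite conjunction of the subgroups $A[p^m]$ and $p^{n-m}A$; after intersecting and passing to quotients, the relevant indices reduce precisely to the orders of the quotients $(p^nA)[p]/(p^{n+1}A)[p]$, $\,p^nA/p^{n+1}A$, $\,(p^nA)[p]$ and $p^nA$. In other words, the family $\mathrm{U}(p,n;A)$, $\mathrm{Tf}(p,n;A)$, $\mathrm{D}(p,n;A)$, $\mathrm{Exp}(p,n;A)$, together with the derived data recording whether $A$ has finite exponent and, for each $p$, finite $p$-length, is not merely a collection of invariants of $\mathrm{Th}(A)$ but a \emph{complete} one. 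This bookkeeping, reorganizing the Baur--Monk invariant sentences into the explicit Szmielew data, is the computational heart of the argument and the step I expect to require the most care.

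For existence, given $A$ I would read off these invariants and assemble a direct sum $B$ of copies of $\mathbb{Z}(p^n)$, $\mathbb{Z}_{(p)}$, $\mathbb{Z}(p^\infty)$ and $\mathbb{Q}$, choosing the exponents $\alpha_{p,n},\beta_p,\gamma_p,\delta$ (each finite or $\omega$) so as to match $A$ under the translations $\mathrm{Tf}(p;B)=p^{\beta_p}$, $\mathrm{D}(p;B)=p^{\gamma_p}$ and $\mathrm{U}(p,n;B)=p^{\alpha_{p,n}}$, computing the invariants of $B$ summand by summand via Fact~\ref{F:pp-commutes-with-sum}. Equality of invariants then yields $A\equiv B$ by the previous paragraph. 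That $B$ may be taken strict follows by inspecting the relations forced among the invariants: unbounded exponent of the torsion part makes the exponent invariants infinite and thereby forces $\delta=0$ in the matching group, while unbounded $p$-length forces $\beta_p=\gamma_p=0$, so that the three bullet conditions are automatically satisfied by the group so constructed.

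Finally, for uniqueness, two strict Szmielew groups with $B\equiv B'$ share all invariants; since for a countable group a cardinal is pinned down by whether the corresponding invariant $p^{\alpha}$ equals a prescribed finite value or $\aleph_0$, the data $\alpha_{p,n},\beta_p,\gamma_p$ agree for $B$ and $B'$ directly from $\mathrm{U},\mathrm{Tf},\mathrm{D}$. The one genuinely delicate parameter is $\delta$: the theory of nontrivial torsion-free divisible groups is complete, so a copy of $\mathbb{Q}$ is invisible beyond the dichotomy $\delta=0$ versus $\delta\neq 0$, and moreover copies of $\mathbb{Q}$ are absorbed by any infinite torsion-free or divisible part (for instance $\mathbb{Z}_{(p)}^{(\omega)}\oplus\mathbb{Q}\equiv\mathbb{Z}_{(p)}^{(\omega)}$, as one checks on the invariants). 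The strictness conditions are exactly what normalize this ambiguity, collapsing every $\delta\neq 0$ to $\delta=\omega$ and forcing $\delta=0$ whenever the remaining structure already witnesses it; hence $B\cong B'$. Handling this absorption of the $\mathbb{Q}$-summand is, alongside the index computation above, where the real content of the theorem lies.
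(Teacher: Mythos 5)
This Fact is not proved in the paper at all: it is Szmielew's classical theorem, stated with a pointer to \cite[Appendix A.2]{Hodges} for a proof, and your outline---Baur--Monk elimination reducing elementary equivalence to equality of the invariants $\mathrm{U},\mathrm{Tf},\mathrm{D},\mathrm{Exp}$, existence by assembling a direct sum matching those invariants, and uniqueness via the absorption phenomena (of $\mathbb{Q}$, and of $\mathbb{Z}_{(p)}$, $\mathbb{Z}(p^\infty)$ in the unbounded $p$-length case) that the three strictness clauses normalize---is precisely the argument of that cited source. So your approach agrees with the one the paper relies on and is correct in outline, the only blemishes being the deferred (but genuinely routine, if laborious) index computation and the slip that $p^n\mid p^m x$ defines the sum $p^{n-m}A + A[p^m]$ rather than making p.p.-subgroups intersections of the groups $A[p^m]$ and $p^{n-m}A$ alone.
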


We finish this section with an easy observation.

\begin{lemma}\label{L:p.p.-non-trivial}
If $A$ is an abelian group of unbounded exponent, then any p.p. formula of the form $\bigwedge_k n_k |m_kx$ with $0\le m_k< n_k$ defines an infinite subgroup of unbounded exponent.
\end{lemma}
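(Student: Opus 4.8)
The plan is to isolate a single large definable subgroup that is already contained in the set cut out by the formula, and then observe that multiplying $A$ by a fixed integer cannot force its exponent to become finite. Write $H\le A$ for the subgroup defined by $\bigwedge_k n_k\mid m_kx$, and set $N=\prod_k n_k$; note that $N\ge 1$ is a genuine positive integer since each $n_k>m_k\ge 0$. The key step I would carry out first is the inclusion $NA\subseteq H$. Indeed, for each index $k$ we have $n_k\mid N$, hence $NA\subseteq n_kA$; and any element $x\in n_kA$, say $x=n_kw$, satisfies $m_kx=n_k(m_kw)\in n_kA$, which is exactly the conjunct $n_k\mid m_kx$. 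Therefore every element of $NA$ satisfies all the conjuncts simultaneously, so $NA\subseteq H$.

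Next I would show that $NA$ has unbounded exponent. Since multiplication by an integer is an endomorphism of $A$, for any positive integer $M$ one has $M\cdot(NA)=(MN)A$. Thus, if $NA$ had finite exponent $M$, then $(MN)A=0$ and $A$ would have exponent dividing $MN$, contradicting the hypothesis that $A$ has unbounded exponent. Consequently $NA$, and hence also $H\supseteq NA$, has unbounded exponent. In particular $H$ is infinite, because every finite abelian group has finite exponent. This yields both conclusions of the lemma.

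I do not expect a real obstacle here: once the inclusion $NA\subseteq H$ is recognised, the remainder is the one-line identity $M\cdot(NA)=(MN)A$. The only place the hypotheses are used is to guarantee that each $n_k\ge 1$, so that $N$ is a positive integer and $NA$ is meaningful; this is exactly what $0\le m_k<n_k$ provides. One could instead argue prime-by-prime using Remark \ref{R:p^n|p^mx}, rewriting each conjunct as the subgroup $p^{n-m}A+A[p^m]$, but passing through the single inclusion $NA\subseteq H$ avoids any case analysis and keeps the argument uniform.
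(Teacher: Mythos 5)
Your proof is correct, and it takes a genuinely different --- and more elementary --- route than the paper. The paper's own argument first replaces $A$ by an elementarily equivalent strict Szmielew group (using implicitly that both the hypothesis and the conclusion are invariant under elementary equivalence) and then checks the building blocks case by case, computing for instance that in $\bigoplus_{n>0}\mathbb{Z}(p^n)^{(\alpha_n)}$ the formula defines the subgroup $p^n\mathbb{Z}(p^k)$ for all sufficiently large $k$. You instead exhibit a single explicit subgroup inside the set $H$ cut out by the formula: with $N=\prod_k n_k$ you verify $NA\subseteq H$ (the inclusion $n_kA\subseteq\{x: n_k\mid m_kx\}$ is immediate from $m_k(n_kw)=n_k(m_kw)$, and $NA\subseteq n_kA$ since $n_k\mid N$), and then the identity $M\cdot(NA)=(MN)A$ shows that $NA$, hence $H$, has unbounded exponent; infinitude follows since a finite abelian group has finite exponent. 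Both steps are sound. Your approach buys uniformity and a little extra generality: it needs neither Szmielew's classification nor the hypothesis $m_k<n_k$ (only $n_k\ge 1$), it avoids the unstated invariance-under-elementary-equivalence step, and it produces a canonical fully invariant subgroup $NA\le H$ witnessing the conclusion. What the paper's case analysis buys instead is finer structural information --- an explicit description of what such formulas define in each Szmielew summand, in the spirit of Fact \ref{F:pp-formula} --- which is reused elsewhere in the paper, whereas your inclusion $NA\subseteq H$ only gives a lower bound on $H$.
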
 
\begin{proof}
Let $A$ be an infinite abelian group, which we may assume to be a strict Szmielew group. By strictness, it suffices to see that the given p.p-formula $\varphi(x)$ defines a non-trivial subgroup in $\bigoplus_{n>0} \mathbb Z(p^n)^{(\alpha_n)}$ with $\alpha_n\neq 0$ for infinitely many $n$, in $\mathbb Z_{(p)}$, in $\mathbb Z(p^\infty)$ and also in $\mathbb Q$. The last three cases are clear. For the first case, note that this formula is equivalent, in $\bigoplus_{n>0} \mathbb Z(p^n)^{(\alpha_n)}$, to $\bigwedge_k (p^{r_k}|p^{s_k}x)$ with $0\le s_k<r_k$. Setting $n = \max_i \{r_i-s_i\}$, one can easily see that $\varphi(x)$ defines the subgroup $p^n \mathbb Z(p^k)$ of $\mathbb Z(p^k)$ for $k\ge \max_i\{r_i\}$ and hence, the result follows. 
\end{proof}

\section{dp-rank and vc-density}\label{s:dprk,vcden}
In this section, we prove that for an abelian group $A$ the breadth of $\LPP(A)$ coincides with the $\dprk$-rank and the {\rm vc}-density of the theory of $A$; the equivalence between breadth and {\rm vc}-density was proved in \cite[Corollary 4.13]{ADHMS}. In fact, we shall prove this for a larger class of groups called one-based groups, generalizing the aforementioned result, as well as some other results from \cite[Section 4.5]{ADHMS}. 

Recall that a group $G$ is {\em one-based} if any definable subset of $G^m$ is a finite boolean combination of cosets of $\acl^{\rm eq}(\emptyset)$-definable subgroups of $G^m$. Thus, by the Baur-Monk quantifier elimination, left modules over a unital ring are one-based groups and in fact any one-based group is abelian-by-finite by \cite[Theorem 3.2]{HruPil}. On the other hand, a one-based group can have additional structure other than the one coming from the group (or module) language. For instance, the class of one-based groups contains {\em abelian structures}, {\it i.e.} an abelian group $A$ with some predicates for subgroups of Cartesian powers of $A$ is one-based, see \cite[Theorem 4.2.8]{WagBook}. Similarly as in the abelian case, using the description of definable sets one can see that any one-based group has a stable theory.

\subsection{Dp-rank} For convenience, we fix a complete theory with infinite models in a given language. We shall be working inside a sufficiently saturated model of the theory. Thus, tuples and sets are assumed to be taken in this ambient model. We begin by recalling the following combinatorial pattern:

\begin{definition}
Let $\kappa$ be a cardinal. An \emph{inp-pattern of depth $\kappa$} for a partial type $\pi(x)$ is a sequence $(\varphi^\alpha(x,y^\alpha))_{\alpha<\kappa}$ of formulas together with a sequence of natural numbers $(k^\alpha)_{\alpha<\kappa}$ such that there is an array of tuples $b_i^\alpha$ for $\alpha<\kappa$ and $i<\omega$ for which each `row' $\{\varphi^{\alpha}(x,b_i^{\alpha})\}_{i<\omega}$ is $k^{\alpha}$-inconsistent, but every `path' $\pi(x)\cup \{\varphi^{\alpha}(x,b_{\eta(\alpha)}^{\alpha})\}_{\alpha<\kappa}$ for a function $\eta:\kappa\rightarrow \omega$ is consistent.
\end{definition}

\begin{remark}\label{R:MutInd} In the definition of {\rm inp}-pattern, the sequences $(b_i^\alpha)_{i<\omega}$ with $\alpha<\kappa$ can be taken to be {\em mutually indiscernible}, {\it i.e.} each $(b_i^\alpha)_{i<\omega}$ is indiscernible over $\{b_i^\beta\}_{i<\omega,\beta\neq\alpha}$, see \cite[Proposition 6]{Adler} or \cite[Lemma 4.2]{SimBook}.
\end{remark}

To avoid an excess of generality, we only recall the characterisation of {\rm dp}-rank for stable theories. In particular, we relate it to the notion of weight. Recall that the {\em weight} of a complete type $p$ is the supremum of the set of all cardinalities $\kappa$ for which there exists a non-forking extension $q\in S(M)$ of $p$ and an $M$-independent sequence $(b_i)_{i<\kappa}$ such that $\mathrm{tp}(a/M,b_i)$ forks over $M$. 

We refer to \cite{Adler} for basic facts about the {\rm dp}-rank and related notions, in particular to the following result:

\begin{fact} In a stable theory, the {\em {\rm dp}-rank} of a partial type $\pi$ equals the supremum of the cardinalities $\kappa$ of all possible {\rm inp}-patterns for $\pi$ and moreover, it is the supremum of the weights of all complete types extending $\pi$. 

\end{fact}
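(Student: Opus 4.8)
Write $\mathrm{bdn}(\pi)$ for the supremum of the depths of all inp-patterns for $\pi$, the so-called \emph{burden} of $\pi$, and $\mathrm{wt}$ for weight. The statement splits into two equalities, $\dprk(\pi)=\mathrm{bdn}(\pi)$ and $\mathrm{bdn}(\pi)=\sup\{\mathrm{wt}(p): p\in S(M),\ \pi\subseteq p\}$, and the plan is to prove them separately. The first is a feature of all NIP theories, and stable theories are NIP; the second is where stability proper enters, through the good behaviour of forking: in a stable theory forking and dividing coincide, are symmetric and transitive, and types over models are stationary. Throughout one works in a monster model and invokes Remark~\ref{R:MutInd} to pass to mutually indiscernible rows whenever an inp-pattern is given.

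For $\dprk(\pi)=\mathrm{bdn}(\pi)$, recall that $\dprk(\pi)$ is, by definition, the supremum of the number of mutually indiscernible sequences $(I_\alpha)$ for which there is a realization $a$ of $\pi$ such that no $I_\alpha$ remains indiscernible over $a$ (equivalently, the supremum of the depths of ict-patterns for $\pi$). The point is to pass between these ``randomness'' patterns and inp-patterns without changing the depth. Given an inp-pattern, one first makes its rows mutually indiscernible; the hallmark of NIP---that the trace $\{i:\models\varphi^\alpha(a,b^\alpha_i)\}$ of a formula on an indiscernible row is a finite union of convex pieces---then upgrades the $k^\alpha$-inconsistency of each row into the alternation required of an ict-pattern, and symmetrically an ict-pattern is turned into an inp-pattern. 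Hence the two suprema agree. This step I regard as standard and I would simply cite \cite{Adler}, adding only the indiscernibility reduction.

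The substantive step is $\mathrm{bdn}(\pi)=\sup\{\mathrm{wt}(p)\}$, proved by matching inp-patterns with weight configurations. From an inp-pattern of depth $\kappa$ with rows $(b^\alpha_i)_i$ made mutually indiscernible over a model $M$, choose a realization $a$ of a path; then $a\models\pi$, each formula $\varphi^\alpha(x,b^\alpha_0)$ divides over $M$ (its row is $M$-indiscernible and $k^\alpha$-inconsistent), so $\tp(a/Mb^\alpha_0)$ forks over $M$, while mutual indiscernibility forces $(b^\alpha_0)_{\alpha<\kappa}$ to be $M$-independent; thus $\tp(a/M)$ extends $\pi$ and has weight at least $\kappa$. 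Conversely, from a weight configuration---a nonforking extension $q\in S(M)$ of some $p\supseteq\pi$ with realization $a$ and an $M$-independent family $(b_\alpha)_{\alpha<\kappa}$ such that $\tp(a/Mb_\alpha)$ forks over $M$ for each $\alpha$---one picks for each $\alpha$ a formula $\varphi^\alpha(x,b_\alpha)\in\tp(a/Mb_\alpha)$ dividing over $M$ together with an $M$-indiscernible sequence through $b_\alpha$ witnessing an inconsistent row. The main obstacle lies precisely here, in the forking bookkeeping: on this side one must check that every path stays consistent, which amounts to showing that replacing each $b_\alpha$ by an $M$-conjugate preserves $M$-independence and hence, by stationarity over $M$, the simultaneous realizability of the chosen formulas; on the other side one must verify that mutual indiscernibility over $M$ indeed yields $M$-independence of the base points $(b^\alpha_0)$. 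Both are consequences of symmetry, transitivity and stationarity, but they are the heart of the argument and would be carried out with care.
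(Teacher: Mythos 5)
Your overall architecture is reasonable, and since the paper itself offers no argument for this Fact (it is simply quoted from \cite{Adler}), the only question is whether your sketch would close. The half going from a weight configuration to an inp-pattern is essentially correct: forking equals dividing in stable theories, dividing is witnessed along every Morley sequence (so the rows are inconsistent), and mutual independence of the chosen rows plus stationarity of types over models gives consistency of all paths. One slip in the first equality: NIP is what is needed to turn an ict-pattern into an inp-pattern (bounded alternation on an indiscernible row makes the conjunctions $\varphi^\alpha(x,y)\wedge\neg\varphi^\alpha(x,y')$ inconsistent along the row), whereas the passage from an inp-pattern to an ict-pattern holds in every theory; since you delegate this part to \cite{Adler}, that is only a misattribution.

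The genuine gap is in the remaining half, at precisely the step you flag and then dismiss as routine forking calculus. It is false that mutual indiscernibility over a model $M$ yields $M$-independence of the base points $(b^\alpha_0)_{\alpha<\kappa}$. Counterexample: let $T$ be the theory of one equivalence relation $E$ with infinitely many infinite classes, let $M$ be a model, let $C$ be an $E$-class disjoint from $M$, and let $I_1=(a_i)_{i<\omega}$ and $I_2=(b_i)_{i<\omega}$ be disjoint sequences of distinct elements of $C$. By quantifier elimination these two sequences are mutually indiscernible over $M$, yet $E(x,a_0)$ divides over $M$, so $\tp(b_0/Ma_0)$ forks over $M$. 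Hence no argument from symmetry, transitivity and stationarity alone can establish your claim; a correct proof must either exploit the $k^\alpha$-inconsistency of the rows in an essential way, or manufacture independence by changing the base. The standard repair is the latter: split each row as $I_\alpha=I'_\alpha+I''_\alpha$ with $I'_\alpha$ infinite, enlarge the base from $M$ to $M^*=\acl^{\rm eq}\bigl(M\cup\bigcup_\alpha I'_\alpha\bigr)$, and use that in a stable theory the average type of $I_\alpha$ is definable over the infinite segment $I'_\alpha$; then each tail $I''_\alpha$ is a nonforking sequence over $M^*$ independent from the remaining rows, so the tails are mutually independent over $M^*$, still mutually indiscernible, still $k^\alpha$-inconsistent, and paths through them remain consistent with $\pi$, after which your weight computation goes through over a suitable model containing $M^*$. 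This base-enlargement/canonical-base step is the real content of the hard direction, and it is missing from your proposal as written.
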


The {\rm dp}-rank of a stable theory (or structure) is the {\rm dp}-rank of the partial type given by the formula $x=x$.  
We say that a stable theory is {\em {\rm dp}-minimal} if the partial type $\{x=x\}$ only admits {\rm inp}-patterns of depth $1$, and {\em strong} if there is no infinite {\rm inp}-pattern or equivalently if every type has finite weight. Let us remark that the {\rm dp}-rank of a stable countable theory is at most $\aleph_0$, but such a theory may not be strong. 

The following result illustrates the relation between inp-patterns and the semilattice of commensurability classes of definable subgroups.

\begin{proposition}\label{P:inpattern-in-1based}
A one-based group admits an inp-pattern of depth $\kappa$ if and only if there exist $\acl^{\rm eq}(\emptyset)$-definable subgroups $(H_\alpha)_{\alpha<\kappa}$ such that for any $i_0<\kappa$ the index 
\[
\left[\bigcap_{\alpha\neq i_0} H_\alpha:\bigcap_{\alpha}H_\alpha\right]=\infty.
\]
Furthermore, in the latter case, these subgroups $H_\alpha$ witness an inp-pattern of depth $\kappa$, {\it i.e.} there exists an indiscernible array $(b^\alpha_i)_{\alpha<\kappa,i<\omega}$, such that $\{x\in b_i^\alpha H_\alpha\}_{\alpha<\kappa,i<\omega}$ forms an inp-pattern of depth $\kappa$.
\end{proposition}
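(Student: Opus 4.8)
The plan is to establish the equivalence by proving the \emph{furthermore} clause as the backward implication and a coset-counting argument as the forward one; together these also yield the stated equivalence. For the backward direction, suppose the $\acl^{\rm eq}(\emptyset)$-definable subgroups $(H_\alpha)_{\alpha<\kappa}$ satisfy the index hypothesis. Write $K=\bigcap_\alpha H_\alpha$ and $K_\beta=\bigcap_{\alpha\neq\beta}H_\alpha$, so that $[K_\beta:K]=\infty$ for every $\beta$. I would take $\varphi^\alpha(x,y):=(x-y\in H_\alpha)$ and, for each $\beta$, choose elements $(b_i^\beta)_{i<\omega}$ lying in $K_\beta$ and in pairwise distinct cosets of $K$, which is possible precisely because $[K_\beta:K]=\infty$.

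Each row is then $2$-inconsistent: if $i\neq j$ then $b_i^\beta-b_j^\beta\in K_\beta$ but $b_i^\beta-b_j^\beta\notin K=K_\beta\cap H_\beta$, so $b_i^\beta-b_j^\beta\notin H_\beta$ and the cosets $b_i^\beta+H_\beta$ are disjoint. For path consistency it suffices, by compactness, to realise every finite subpath: given finitely many rows $F$ and a choice $\eta$, the element $\sum_{\beta\in F}b_{\eta(\beta)}^\beta$ witnesses it, since for each $\gamma\in F$ the remaining summands lie in $K_\beta\subseteq H_\gamma$ (as $\gamma\neq\beta$), so the whole sum lies in $b_{\eta(\gamma)}^\gamma+H_\gamma$. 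This produces an inp-pattern of depth $\kappa$ with the prescribed formulas; to obtain an indiscernible array I would then invoke Remark \ref{R:MutInd} to pass to a mutually indiscernible array still witnessing the formulas $x\in b_i^\alpha H_\alpha$.

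For the forward direction, I would start from an inp-pattern of depth $\kappa$ and, using Remark \ref{R:MutInd}, assume the array $(b_i^\alpha)$ is mutually indiscernible. By one-basedness each set $\varphi^\alpha(G,b_i^\alpha)$ is a finite boolean combination of cosets of $\acl^{\rm eq}(\emptyset)$-definable subgroups, and by compactness together with the rigidity of such subgroups in one-based groups these can be taken from a fixed finite list $H_1^\alpha,\dots,H_{N_\alpha}^\alpha$ independent of $i$. The crucial reduction is to replace, row by row, the formula $\varphi^\alpha(x,b_i^\alpha)$ by a single coset $d_i^\alpha+H_\alpha$ of an $\acl^{\rm eq}(\emptyset)$-definable subgroup $H_\alpha$ (an intersection of some of the $H_j^\alpha$), preserving inconsistency of rows and consistency of paths: realising a path and tracking the cell of the boolean combination containing the realisation singles out, uniformly in $i$ by indiscernibility, such a coset inside $\varphi^\alpha(G,b_i^\alpha)$. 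Once in this form, fix $\alpha_0$ and fix a choice in every other row; the intersection of those cosets is a single coset $c+K_{\alpha_0}$ with $K_{\alpha_0}=\bigcap_{\beta\neq\alpha_0}H_\beta$. Path consistency yields, for each $i<\omega$, a point $x_i\in(c+K_{\alpha_0})\cap(d_i^{\alpha_0}+H_{\alpha_0})$, while row-inconsistency together with indiscernibility forces the cosets $d_i^{\alpha_0}+H_{\alpha_0}$ to be pairwise distinct; hence $x_i-x_j\in K_{\alpha_0}$ but $x_i-x_j\notin H_{\alpha_0}$, so the $x_i$ occupy infinitely many distinct cosets of $K=K_{\alpha_0}\cap H_{\alpha_0}=\bigcap_\beta H_\beta$ inside $c+K_{\alpha_0}$. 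This gives $[K_{\alpha_0}:K]=\infty$, as required.

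The main obstacle is the single-coset reduction in the forward direction: passing from boolean combinations of cosets to one coset per row requires controlling the negative constraints appearing in each cell and ensuring that the chosen subgroup $H_\alpha$ is the same across the entire row, for which mutual indiscernibility and the rigidity of $\acl^{\rm eq}(\emptyset)$-definable subgroups in one-based groups are the essential inputs. By contrast, the backward construction and the final index computation are routine coset counting.
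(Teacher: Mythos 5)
Your overall skeleton matches the paper's (mutual indiscernibility, one-basedness giving boolean combinations of cosets, the same subgroups along each row, the final coset-counting), but the step you yourself flag as the main obstacle --- replacing each $\varphi^\alpha(x,b_i^\alpha)$ by a single coset --- is not established, and in the form you describe it is actually false. You ask for a coset $d_i^\alpha+H_\alpha$ of an intersection $H_\alpha$ of the involved subgroups lying \emph{inside} $\varphi^\alpha(G,b_i^\alpha)$. Such cosets exist (any boolean combination of cosets of $H_1^\alpha,\dots,H_{N_\alpha}^\alpha$ is a union of cosets of their total intersection), and passing to them keeps rows inconsistent, but it destroys path consistency; indiscernibility cannot restore it, because the coset containing a path-realisation $a$ is not definable from $b_i^\alpha$ alone (a cell contains many cosets of $H_\alpha$, with no canonical choice), and the array is not indiscernible over $a$. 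Concretely, in the one-based abelian structure $G=(\mathbb{Q}^3,+,H_0,H_1,N)$ with $H_0=\mathbb{Q}\times\mathbb{Q}\times\{0\}$, $H_1=\mathbb{Q}\times\{0\}\times\{0\}$ and $N=\{(t,0,t):t\in\mathbb{Q}\}$, the two rows $\{x\in(0,0,i)+H_0\ \wedge\ x\notin(0,0,i)+H_1\}_{i<\omega}$ and $\{x\in(0,j,0)+N\}_{1\le j<\omega}$ form an inp-pattern of depth $2$, yet any coset of $H_0\cap H_1=H_1$ contained in the $i$-th cell meets at most one coset from the second row; so no choice of single cosets inside the first row's formulas preserves path consistency. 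The workable reduction is the opposite one, which the paper takes: weaken each cell to its \emph{positive} coset $b_{i,0}^\alpha+H_0^\alpha$, so that paths survive automatically, and then \emph{prove} that these positive cosets are pairwise distinct --- this is the paper's Claim, proved by a pigeonhole argument on the negated cosets using the $k^\alpha$-inconsistency of the original row. Your later remark that ``row-inconsistency together with indiscernibility forces the cosets $d_i^{\alpha_0}+H_{\alpha_0}$ to be pairwise distinct'' is circular at exactly this point, since row-inconsistency of the reduced pattern is what has to be shown.

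The backward direction has a second, smaller gap: you write the group additively and certify path consistency with the witness $\sum_{\beta\in F}b^\beta_{\eta(\beta)}$, which needs commutativity (or normality of the $H_\alpha$); in a non-abelian group the factors coming from other rows cannot be moved past $b^\gamma_{\eta(\gamma)}$, so the product need not lie in $b^\gamma_{\eta(\gamma)}H_\gamma$. One-based groups are only abelian-by-finite (Hrushovski--Pillay), so the paper first passes to the $\emptyset$-definable maximal abelian normal subgroup $A$ of finite index, replaces each $H_\alpha$ by $H_\alpha\cap A$ (checking that the infinite-index hypothesis survives, which it does because $H_\alpha\cap A$ has finite index in $H_\alpha$), and only then runs the coset-sum argument you give. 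With that reduction inserted, your backward direction, including the passage to a mutually indiscernible array via Remark \ref{R:MutInd}, is fine and coincides with the argument of Chernikov--Kaplan--Simon that the paper cites.
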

\begin{remark}
In the above description, we use the convention that the empty intersection of subgroups is the whole group. Thus an inp-pattern of depth $1$ will be witnessed by any $\acl^{\rm eq}(\emptyset)$-definable infinite index subgroup.
\end{remark}
\begin{proof}
Let $G$ be a one-based group and suppose that it admits an {\rm inp}-pattern $(\varphi^\alpha(x,y^\alpha))_{\alpha<\kappa}$ witnessed by natural numbers $(k^\alpha)_{\alpha<\kappa}$ and an array of tuples $(b_i^\alpha)_{i<\omega,\alpha<\kappa}$, where each $b_i^\alpha= (b_{i,0}^\alpha,\ldots,b_{i,n_\alpha}^\alpha)$. 

We start with some reductions. By one-basedness, we may assume that each formula $\varphi^{\alpha}(x,b_i^{\alpha})$ is indeed a finite boolean combination of cosets of $\acl^{\rm eq}(\emptyset)$-definable subgroups and in addition, by Remark \ref{R:MutInd}, we may take the sequences $(b_i^\alpha)_{i<\omega}$ to be mutually indiscernible. Thus, as the sequence $(b_i^{\alpha})_{i<\omega}$ is $\acl^{\rm eq}(\emptyset)$-indiscernible, the definable subgroups determining the cosets appearing in $\varphi^\alpha(x,b_i^{\alpha})$ are the same as the ones giving the cosets appearing in $\varphi^{\alpha}(x,b_0^{\alpha})$. 

Furthermore, by mutually indiscernibility and the consistency of paths we reduce to the case where all the $\varphi^{\alpha}(x,b_i^{\alpha})$ are without disjunctions. Therefore, for every $\alpha$ the formula $\varphi^{\alpha}(x,b_i^{\alpha})$ may be assumed to be of the form
\[ 
x\in b_{i,0}^{\alpha} \, H^{\alpha}_0\wedge x\notin b_{i,1}^{\alpha} \, H^{\alpha}_1\wedge \dots \wedge x\notin b_{i,n_\alpha}^{\alpha}\,  H^{\alpha}_{n_\alpha}
,\]
where $H_0^\alpha, \dots, H_{n_\alpha}^\alpha$ are  $\acl^{\rm eq}(\emptyset)$-definable subgroups. 


\begin{claim}
For each $\alpha$, the cosets $b_{i,0}^\alpha\, H_0^{\alpha}$ are all distinct.
\end{claim}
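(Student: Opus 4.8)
The plan is to prove the Claim by contradiction, exploiting the $k^\alpha$-inconsistency of rows together with the mutual indiscernibility we have already arranged. Suppose that for some fixed $\alpha$ the cosets $b_{i,0}^\alpha\, H_0^\alpha$ are \emph{not} all distinct. Since the row $(b_i^\alpha)_{i<\omega}$ is indiscernible, if two of these cosets coincide for some pair of indices, then by indiscernibility the coset $b_{i,0}^\alpha\, H_0^\alpha$ must in fact be independent of $i$, i.e.\ $b_{i,0}^\alpha\, H_0^\alpha = b_{0,0}^\alpha\, H_0^\alpha$ for all $i<\omega$. This is the crux: indiscernibility upgrades ``two cosets agree'' to ``all cosets in the row agree.''

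Given that, I would fix any path and derive a contradiction with $k^\alpha$-inconsistency. Each formula $\varphi^\alpha(x,b_i^\alpha)$ has the positive conjunct $x\in b_{i,0}^\alpha\, H_0^\alpha$, which is now the same coset $C := b_{0,0}^\alpha\, H_0^\alpha$ for every $i$. The negative conjuncts $x\notin b_{i,j}^\alpha\, H_j^\alpha$ (for $1\le j\le n_\alpha$) each forbid a single coset of $H_j^\alpha$. The row being $k^\alpha$-inconsistent means that no element satisfies $k^\alpha$ of the formulas $\varphi^\alpha(x,b_i^\alpha)$ simultaneously. But consistency of paths guarantees that $C$ is nonempty and that any single $\varphi^\alpha(x,b_i^\alpha)$ is satisfiable. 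The idea is that an element of $C$ avoiding finitely many forbidden cosets will satisfy many of the row formulas at once, contradicting $k^\alpha$-inconsistency --- provided we can find such an element in $C$ avoiding enough of the prohibited cosets.

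The main obstacle, and where care is needed, is ensuring that within the fixed coset $C$ one can simultaneously satisfy $k^\alpha$ many of the negated conditions; that is, one must rule out that the forbidden cosets $b_{i,j}^\alpha\, H_j^\alpha$ conspire to cover $C$. Here one uses that $C$ is a coset of the \emph{infinite} group $H_0^\alpha$ (infinite because the whole inp-pattern is nontrivial), so $C$ cannot be covered by finitely many cosets of proper subgroups unless one of those subgroups has finite index in $H_0^\alpha$; and mutual indiscernibility together with the consistency of a single path lets us choose a realization avoiding the relevant finitely many forbidden cosets. Concretely, I would pick a point realizing one path formula $\varphi^\alpha(x,b_0^\alpha)$ and then use indiscernibility of the row to argue that, since the positive coset is fixed, the same point (or a generic point of $C$) lies in infinitely many of the rows' positive cosets while avoiding all but finitely many negative ones, yielding a realization of $k^\alpha$ formulas and contradicting the $k^\alpha$-inconsistency. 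This forces the cosets $b_{i,0}^\alpha\, H_0^\alpha$ to be pairwise distinct, proving the Claim.
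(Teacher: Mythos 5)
Your opening move --- using indiscernibility to upgrade a single coincidence $b_{i,0}^\alpha H_0^\alpha = b_{i',0}^\alpha H_0^\alpha$ to ``all positive cosets equal one coset $C$'' --- is exactly how the paper begins, and it is correct. The gap is in everything after that. Your plan is to contradict $k^\alpha$-inconsistency by producing a point of $C$ that \emph{avoids} enough forbidden cosets, and your concrete mechanism is that a realization of $\varphi^\alpha(x,b_0^\alpha)$ (or a ``generic'' point of $C$) ``lies in infinitely many of the rows' positive cosets while avoiding all but finitely many negative ones.'' That last assertion is not merely unjustified; no argument can deliver it in this form, because under the reductio hypothesis the $k^\alpha$-inconsistency of the row forces \emph{every} element $a\in C$ to lie in $\bigcup_{j\ge 1} b_{i,j}^\alpha H_j^\alpha$ for all but at most $k^\alpha-1$ values of $i$ (the positive conjunct holds for all $i$, and $a$ can satisfy at most $k^\alpha-1$ row formulas). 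So the most one can hope for is a point avoiding the negative cosets of $k^\alpha$ \emph{chosen} rows, and producing such a point is precisely the work your sketch does not do: the Neumann-type covering lemma you invoke only handles forbidden subgroups $H_j^\alpha$ whose intersection with $H_0^\alpha$ has infinite index, and you explicitly leave open the finite-index case --- the one case in which finitely many forbidden cosets \emph{can} cover $C$ --- dispatching it with the sentence ``mutual indiscernibility together with the consistency of a single path lets us choose a realization,'' which is an assertion, not a proof. (This case can be closed: for finite-index $j$ the traces $b_{i,j}^\alpha H_j^\alpha\cap C$ are cosets of one finite-index subgroup, so pigeonhole plus indiscernibility makes them constant in $i$, consistency of one row formula shows they do not cover $C$, and Neumann's lemma is then applied inside a leftover coset to handle the infinite-index $j$'s; one must also treat separately the possibility that $H_0^\alpha$ is finite, for which your justification ``infinite because the inp-pattern is nontrivial'' is not an argument. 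But none of this appears in your proposal.)

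It is worth seeing how the paper sidesteps all of this by running the argument in the opposite direction: rather than dodging the forbidden cosets, it exploits the forced membership noted above. Take $a\models\varphi^\alpha(x,b_0^\alpha)$; then $a\in C$, and since $a$ satisfies at most $k^\alpha-1$ row formulas, $a\in\bigcup_{j\ge 1} b_{i,j}^\alpha H_j^\alpha$ for infinitely many $i$. By pigeonhole there is a fixed index $k$ with $a\in b_{i,k}^\alpha H_k^\alpha$ for infinitely many $i$; cosets of the same subgroup sharing a point are equal, and indiscernibility then yields $b_{0,k}^\alpha H_k^\alpha = b_{i,k}^\alpha H_k^\alpha\ni a$, contradicting the fact that $a$ satisfies the negative conjunct $x\notin b_{0,k}^\alpha H_k^\alpha$. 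This needs no covering lemma, no finite-versus-infinite index case distinction, and no assumption that $H_0^\alpha$ is infinite. If you want to salvage your route you must supply Neumann's lemma together with the finite-index analysis sketched above; as written, the proposal identifies the obstacle but does not overcome it.
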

\begin{claimproof}
Otherwise, by indiscernibility all are the same and so, possibly after multiplying on the left by $(b_{i,0}^{\alpha})^{-1}$, we may suppose that $b_{i,0}^\alpha=1$ for every $i$. Let $a$ be an element of $G$ satisfying the formula $\varphi^\alpha(x,b_{0}^\alpha)$, {\it i.e.} 
$$
a\in H_0^\alpha \setminus \bigcup_{j=1}^{n_\alpha} b_{0,j}^\alpha \, H_j^\alpha.
$$ 
Since the set $\left \{\varphi^\alpha(x,b_i^\alpha) \right \}_{i<\omega}$ is $k^\alpha$-inconsistent, we can clearly find  infinitely many indices $i$ such that $a$ belongs to $\bigcup_{j=1}^{n_\alpha} b_{i,j}^\alpha H_j^\alpha$. By the pigeonhole principle, in fact, there is some index $k$ such that $a$ belongs to $b_{i,k}^\alpha H_k^\alpha$ for infinitely many indices $i$. Thus, all these cosets $b_{i,k}^\alpha H_k^\alpha$ are equal and hence $b_{0,k}^\alpha H_k^\alpha = b_{i,k}^\alpha H_k^\alpha$ by indiscernibility, contradicting the choice of the element $a$.
\end{claimproof}

It then follows by the Claim that the set of formulas $x\in b_{i,0}^{\alpha} \, H_0^\alpha$ for $i<\omega$ and $\alpha<\kappa$ yield an {\rm inp}-pattern of depth $\kappa$, as desired. We may, therefore, drop the index $0$ and simply write $b_i^\alpha$ and $H^\alpha$ instead of $b_{i,0}^\alpha$ and $H_0^\alpha$ respectively.

It remains to show that the the intersection of all subgroups $H^\alpha$ has unbounded index in the intersection of all except one. If $\kappa=1$, there is nothing to show. Otherwise, fix some $\beta<\kappa$. By assumption, for each $i$ the `path' \[\{\varphi^{\alpha}(x,b_{0}^{\alpha})\}_{\alpha\neq \beta}\cup\{\varphi_{\beta}(x,b_{i}^{\beta})\}\] is consistent and so, for every $i$ we can find an element $a_i$ in $\bigcap_{\alpha\neq \beta} b_0^\alpha H^\alpha\cap b_i^{\beta}H^\beta$. Note that $a_iH^\alpha = b_0^\alpha H^\alpha$ for every $i<\omega$ and $\alpha\neq\beta$; thus, every $a_0^{-1}a_i$ belongs to the intersection $\bigcap_{\alpha\neq\beta} H^\alpha$. 
Since the cosets $a_iH^\beta= b_i^\beta H^\beta$ are all distinct for $i<\omega$ by the Claim,  the cosets 
\[
 a_0^{-1}a_i \left(\bigcap_{\alpha\neq\beta} H^\alpha \cap  H^\beta \right) = \bigcap_{\alpha\neq\beta} H^\alpha \cap a_0^{-1}a_i H^\beta 
\]
are also distinct. Hence, by compactness the subgroup $\bigcap_{\alpha<\kappa} H^\alpha$ has unbounded index in $\bigcap_{\alpha\neq\beta} H^\alpha$, yielding the result.

For the other direction, suppose that $G$ is one-based and  that there are $\acl^{\rm eq}(\emptyset)$-definable subgroups $H_\alpha$ for $\alpha<\kappa$ such that the intersection of all of them have unbounded index in any proper sub-intersection. Since $G$ is abelian-by-finite, there exists a maximal abelian normal subgroup $A$ of $G$ of finite index. Note that $A$ is $\emptyset$-definable, since  by maximality it is $\emptyset$-invariant and equals the definable subgroup $Z(C_G(A))$. For each $\alpha$ we have
\[H_\alpha \slash (H_\alpha\cap A)\cong H_\alpha A\slash A
\] 
and so the subgroup $H_\alpha \cap A$ has finite index in $H_\alpha$. Replacing each $H_\alpha$ by this finite index subgroup, we may assume that every subgroup $H_\alpha$ is a subgroup of $A$. Hence, the same proof as in \cite[Proposition 4.5]{cher-kap-sim} gives an inp-pattern of depth $\kappa$ for $A$ and so for $G$. This finishes the proof.
\end{proof}

\begin{remark}\label{R:Inp-pattern}
An inspection of the proof yields that if any definable set is a boolean combination of cosets of definable subgroups from a family $\mathfrak F$ which is closed under finite intersections, then an inp-pattern can be witnessed by groups from $\mathfrak F$. In particular, for abelian groups we can take $\mathfrak F$ to be the family of p.p.-definable subgroups.

In fact, it is not necessary for $\mathfrak{F}$ to be closed under finite intersections. For if we have an inp-pattern of the form $\{x\in b_i^\alpha H_\alpha\}_{\alpha<\kappa,i<\omega}$, as in the proposition, such that each $H_\alpha$ is the intersection of some family of groups $\{H_{\alpha,j}\}_{j<n_\alpha}$ from $\mathfrak F$, then there must exist some $j_\alpha<n_\alpha$ such that $\{b_i^\alpha H_{\alpha,j_\alpha}\}_{i<\omega}$ is also 2-inconsistent. Consequently, we get that $\{x\in b_i^\alpha H_{\alpha,j_\alpha}\}_{\alpha<\kappa,i<\omega}$ forms an inp-pattern of depth $\kappa$. In particular, for abelian groups we can take $\mathfrak{F}$ to be the family of subgroups defined by formulas of the form $mx = 0$ with $0 \le m$ or $p^n | p^m x$ with $0 \le m < n$.
\end{remark}

Combining the previous proposition and remark with the results from Section 2.2 we obtain:

\begin{corollary}\label{C:breadth=dp}
Let $G$ be a one-based group. The $\dprk$-rank of $G$ is finite if and only if the breadth of the semilattice of commensurability classes of the $\acl^{\rm eq}(\emptyset)$-definable subgroups of $G$ exists. Furthermore, if this happens then they are equal.
\end{corollary}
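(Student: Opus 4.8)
The plan is to deduce the corollary directly from the fact recalled above that, in a stable theory, the $\dprk$-rank equals the supremum of the cardinalities of all inp-patterns, together with Proposition \ref{P:inpattern-in-1based}; recall that a one-based group is stable. By the Proposition, an inp-pattern of depth $\kappa$ exists if and only if there is a family $(H_\alpha)_{\alpha<\kappa}$ of $\acl^{\mathrm{eq}}(\emptyset)$-definable subgroups which is \emph{irredundant}, in the sense that $[\bigcap_{\alpha\neq i_0}H_\alpha:\bigcap_\alpha H_\alpha]=\infty$ for every $i_0<\kappa$. Passing to commensurability classes, such a family is precisely a meet-irredundant family in the semilattice $L$ of commensurability classes of $\acl^{\mathrm{eq}}(\emptyset)$-definable subgroups, whose operation $\wedge$ is induced by intersection: the index being infinite says exactly that the class of $\bigcap_{\alpha\neq i_0}H_\alpha$ lies strictly above that of $\bigcap_\alpha H_\alpha$. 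Thus $\dprk(G)$ is the supremum of the sizes of meet-irredundant families in $L$.

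The point that requires care, and which I expect to be the main obstacle, is reconciling possibly infinite inp-patterns with the finiteness built into the definition of breadth: a priori $\dprk(G)$ could be infinite on account of a single inp-pattern of infinite depth, while every finite inp-pattern is bounded. I would dispose of this by the monotonicity of inp-patterns. If $(\varphi^\alpha(x,y^\alpha))_{\alpha<\kappa}$ is an inp-pattern witnessed by an array $(b_i^\alpha)$, then for any $\kappa'\le\kappa$ its restriction to the rows $\alpha<\kappa'$ is again an inp-pattern: the rows are unchanged, hence still $k^\alpha$-inconsistent, and any path of the restricted pattern is a subset of a path of the full pattern, hence consistent. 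Consequently an inp-pattern of depth $\aleph_0$ yields inp-patterns of every finite depth, so $\dprk(G)$ is finite if and only if the depths of finite inp-patterns are bounded, and in that case $\dprk(G)$ equals the largest such depth. Translating through the previous paragraph, $\dprk(G)$ is finite if and only if the sizes of finite meet-irredundant families in $L$ are bounded, and then it equals the maximal such size.

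It remains to identify this last quantity with $\mathrm{breadth}(L)$, which is the routine combinatorial part. I would prove the standard fact that for a meet-semilattice the breadth exists if and only if the sizes of finite meet-irredundant families are bounded, and that the two then coincide. For one inequality, a meet-irredundant family $x_1,\dots,x_n$ forbids any $(n-1)$-element sub-meet from equalling $x_1\wedge\cdots\wedge x_n$, so $\mathrm{breadth}(L)\ge n$; hence the breadth, if it exists, bounds every irredundant size from above, and unbounded irredundant sizes force the breadth not to exist. For the converse, given any finite $x_1,\dots,x_n$ I would repeatedly delete indices that do not change the total meet until reaching a meet-irredundant subfamily $S$ with $\bigwedge S=x_1\wedge\cdots\wedge x_n$; since $\bigwedge S$ already lies below each $x_i$, adding back any original $x_i$ leaves the meet unchanged, so $S$ can be padded to any size between $|S|$ and $n$. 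Thus if all irredundant families have size at most $d$, then every meet of $n>d$ elements equals a $d$-element sub-meet, giving $\mathrm{breadth}(L)\le d$. Combining these two inequalities with the previous paragraph shows that $\dprk(G)$ is finite exactly when $\mathrm{breadth}(L)$ exists, and that the two agree.
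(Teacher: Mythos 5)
Your proposal is correct and takes essentially the same route as the paper: both directions hinge on Proposition \ref{P:inpattern-in-1based} to translate inp-patterns into families of $\acl^{\rm eq}(\emptyset)$-definable subgroups that are irredundant with respect to finite index, followed by the combinatorial identification of maximal irredundant family size with the breadth of the semilattice of commensurability classes. The only difference is one of presentation: you make explicit the monotonicity of inp-patterns (restriction of rows) and the deletion-and-padding argument in the semilattice, both of which the paper uses implicitly in its phrases ``applying several times Proposition \ref{P:inpattern-in-1based}'' and ``we may thus find definable subgroups $\{H_i\}_{i<m+1}$ witnessing an inp-pattern of depth $m+1$.''
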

\begin{proof}
Observe first that, by definition, given two $\acl^{\rm eq}(\emptyset)$-definable subgroups $H$ and $N$ of $G$, with commensurable classes $x$ and $y$ respectively, we have that
\[
\left[ H : H\cap N \right] <\omega \quad \Leftrightarrow \quad x = x\wedge y .
\] 
Assume now that $d=\dprk(G)$ and let $H_0,\ldots, H_n$ be $\acl^{\rm eq}(\emptyset)$-definable subgroups of $G$ with $n>d$. Then, applying several times Proposition \ref{P:inpattern-in-1based}, we can find indices $0\le i_1<\ldots<i_d\le n$ such that
$$
\left[ \bigcap_{i_j} H_{i_j} : \bigcap_{i} H_{i} \right] < \omega.
$$
Hence, if the $\dprk(G)$ is finite, then the breadth is at most $\dprk(G)$ by the minimality. To show the other direction, assume that the breadth of the semilattice of commensurability classes exists and is equal to $m$. Aiming for a contradiction assume that $m<\dprk(G)$. We may thus find definable subgroups $\{H_i\}_{i<m+1}$ witnessing an inp-pattern of depth $m+1$. However, by the definition of the breadth 
\[
\left[\bigcap_{i\neq i_0} H_i:\bigcap_{i<m+1} H_i\right]<\infty
\] 
for some $i_0<m+1$, a contradiction. Therefore, the group $G$ has finite dp-rank bounded above by $m$.
%
\end{proof}

\begin{corollary}\label{C:BasicEquation}
Let $A$ and $B$ are two abelian groups. If both are strong, then so is $A\oplus B$. Moreover, we have:
$$
\max\{\dprk(A),\dprk(B)\} \le \dprk(A \oplus B) \le \dprk(A) + \dprk(B),
$$ with equality in the second inequation if both $A$ and $B$ are p.p.-definable in $A\oplus B$. 
\end{corollary}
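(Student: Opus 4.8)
The plan is to treat the inequality and the preservation of strongness separately, since the former concerns the exact value of the $\dprk$-rank while the latter concerns only the finiteness of weights. For the inequality I would first record that, for any abelian group $C$, Corollary~\ref{C:breadth=dp} together with Remark~\ref{R:Inp-pattern} identifies $\dprk(C)$ with ${\rm breadth}(\LPP(C))$, with the understanding that $\dprk(C)=\aleph_0$ precisely when the breadth fails to exist (the theory being countable). Granting this dictionary, the chain $\max\{\dprk(A),\dprk(B)\}\le\dprk(A\oplus B)\le\dprk(A)+\dprk(B)$, together with the equality under the hypothesis that both $A$ and $B$ are p.p.-definable in $A\oplus B$, is merely a transcription of Fact~\ref{F:sum-of-breadth}. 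The infinite cases match on both sides: if $\dprk(A\oplus B)$ is finite then the left inequality of Fact~\ref{F:sum-of-breadth} forces both summands to have finite breadth, while if it is infinite the right inequality forces $\dprk(A)+\dprk(B)$ to be infinite too.

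For the preservation of strongness I would argue by contraposition, showing that if $A\oplus B$ is not strong then one of $A$ or $B$ is not strong. Assuming $A\oplus B$ admits an inp-pattern of depth $\omega$, Proposition~\ref{P:inpattern-in-1based} and Remark~\ref{R:Inp-pattern} furnish p.p.-definable subgroups $(H_n)_{n<\omega}$ of $A\oplus B$ with $\big[\bigcap_{n\neq i_0}H_n:\bigcap_n H_n\big]=\infty$ for every $i_0$. By Fact~\ref{F:pp-commutes-with-sum} each $H_n$ splits as $G_n\oplus K_n$ with $G_n=H_n\cap A$ and $K_n=H_n\cap B$ p.p.-definable in $A$ and $B$ respectively; since intersections are computed coordinatewise, the index factorises as $\big[\bigcap_{n\neq i_0}G_n:\bigcap_n G_n\big]\cdot\big[\bigcap_{n\neq i_0}K_n:\bigcap_n K_n\big]$. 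Hence for each $i_0$ at least one factor is infinite, and by the pigeonhole principle there is an infinite $S\subseteq\omega$ for which, say, the $A$-factor is infinite whenever $i_0\in S$.

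It then remains to check that the subfamily $\{G_m\}_{m\in S}$ is already irredundant inside $A$, that is, that $\big[\bigcap_{m\in S,\,m\neq n}G_m:\bigcap_{m\in S}G_m\big]=\infty$ for every $n\in S$; this is the one point requiring care, since passing to a subfamily enlarges the intersections and could in principle destroy the infinite index. Writing $P=\bigcap_{m<\omega}G_m$, $P_n=\bigcap_{m\neq n}G_m$, $Q=\bigcap_{m\in S}G_m$ and $Q_n=\bigcap_{m\in S,\,m\neq n}G_m$, one has $Q=Q_n\cap G_n$ and $P=P_n\cap G_n$, so the second isomorphism theorem gives $[Q_n:Q]=[Q_nG_n:G_n]$ and $[P_n:P]=[P_nG_n:G_n]$; as $P_n\subseteq Q_n$ yields $P_nG_n\subseteq Q_nG_n$, we conclude $[Q_n:Q]\ge[P_n:P]=\infty$. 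Thus $\{G_m\}_{m\in S}$ witnesses, again by Proposition~\ref{P:inpattern-in-1based}, an inp-pattern of depth $\omega$ in $A$, so $A$ is not strong; the symmetric case gives the same conclusion for $B$. The main obstacle is precisely this last index computation: everything else is bookkeeping, but one must verify that the infinite index survives the restriction to $S$, which the direct-sum factorisation and the second isomorphism theorem settle without ever having to assume that the summands are definable.
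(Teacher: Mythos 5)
Your proposal is correct and follows essentially the same route as the paper: the inequalities are read off from Fact \ref{F:sum-of-breadth} via the breadth--dp-rank dictionary of Corollary \ref{C:breadth=dp}, and preservation of strongness is proved by contraposition using Proposition \ref{P:inpattern-in-1based}, Fact \ref{F:pp-commutes-with-sum} and the factorisation of the index across the direct sum. Your explicit pigeonhole step and the second-isomorphism-theorem computation showing the infinite index survives restriction to the subfamily $S$ simply spell out what the paper compresses into ``this yields that either $A$ or $B$ are not strong,'' so the two arguments coincide in substance.
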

\begin{proof}
The moreover part is a consequence of Corollary \ref{C:breadth=dp} and Fact \ref{F:sum-of-breadth}. As for the first part, assume that $A\oplus B$ is not strong. By Proposition \ref{P:inpattern-in-1based}, there exist p.p. formulas $\{\varphi_i(x)\}_{i<\omega}$ which define subgroups that witness this. Since, by Fact \ref{F:pp-commutes-with-sum}, $\varphi_i(A\oplus B)=\varphi_i(A)\oplus \varphi_i(B)$ for every $i<\omega$, an easy computation gives that for every $i_0<\omega$
\begin{align*}
\infty & = \left[\bigcap_{i\neq i_0} \varphi_i(A\oplus B):\bigcap_i \varphi_i(A\oplus B)\right] \\ & = \left[\bigcap_{i\neq i_0}\varphi_i(A):\bigcap_i \varphi_i(A)\right]\cdot \left[\bigcap_{i\neq i_0}\varphi_i(B):\bigcap_i \varphi_i(B)\right].
\end{align*} 
This yields that either $A$ or $B$ are not strong, contradiction.
\end{proof}

\subsection{Vapnik–Chervonenkis density} For the sake of completeness, before stating the main result of the section we briefly recall the definition of vc-density. We refer the reader to \cite{ADHMS1} for a detailed exposition. 

Let $X$ be a set and let $\mathcal S$ be a class of subsets of $X$. A subset $A$ of $X$ is \emph{shattered} by $\mathcal S$ if for any subset $A_0$ of $A$ there is a subset $C$ in $\mathcal S$ such that $A_0 = A\cap C$. 
The {\em shatter function} $\pi_{\mathcal S}:\mathbb N\rightarrow \mathbb N$ is defined as
$$
\pi_{\mathcal S}(n) = \max\{ |\{C\cap A:C\in\mathcal S\}| : |A| = n \}. 	
$$
Note that $\pi_{\mathcal S}(n)\le 2^n$ for any $n$ and we say that $\mathcal S$ has {\rm VC}-dimension at most $n$ if $\pi_{\mathcal S}(n) < 2^{n}$. A fundamental fact (independently due to Perles, Sauer, Shelah and Vapnik-Chervonenkis) yields that if $\mathcal S$ has {\rm VC}-dimension at most $d$, then $\pi_{\mathcal S}(n)$ is bounded above by a polynomial in $n$ of degree $d$, {\it i.e.} $\pi_{\mathcal S}(n) = O(n^d)$. Hence, it makes sense to define the {\em {\rm vc}-density} ${\rm vc}(\mathcal S)$ of $\mathcal S$ as the infimmum $r\ge 0$ for which we have $\pi_{\mathcal S}(n)=O(n^r)$. Thus, we have that 
$$
{\rm vc}(\mathcal S) = \limsup_{n\rightarrow \infty} \frac{\log (\pi_{\mathcal S}(n))}{\log n}.
$$ 
Model-theoretically, one can interpret these notions as follows. Let $\mathbb M$ be an enough saturated model of a first-order theory $T$ and let $\Delta$ be a finite set of formulas $\varphi(x;y)$ in the tuple of object variables $x$ and tuple of
parameter variables $y$. Denote by
$$
\mathcal S_{\Delta} = \{\varphi(\mathbb M;b) : \varphi(x;y)\in \Delta \text{ and } b\in \mathbb M^{|y|}\}
$$
the family of subsets of $\mathbb M^{|x|}$ defined by formulas $\varphi(x;y) \in \Delta$ using parameters $b$ ranging over $\mathbb M^{|y|}$. We simply write $\mathcal S_\varphi$ when $\Delta=\{\varphi(x;y)\}$. It can be shown that the shatter function of $\mathcal S_{\Delta}$ and its {\rm vc}-density do not depend on the choice of the model, see \cite[Lemma 3.2]{ADHMS1}. 

\begin{definition} The {\rm vc}-density of a theory $T$ is the function ${\rm vc}^T:\mathbb N \rightarrow \mathbb R^{\ge 0}\cup \{\infty\}$ defined as 
$$
{\rm vc}^{T} (n) =\sup \{ {\rm vc}(\mathcal S_{\varphi}):  \text{ $\varphi(x;y)$ is a formula of $T$ with } |y|=n \} .
$$
We say that a theory $T$ has {\rm vc}-density at most $d$ if ${\rm vc}^T(1)\le d$. 
\end{definition}
In fact, the {\rm vc}-density of the theory is closely related with the number of $\Delta$-types over finite sets. Recall that a complete $\Delta$-type over a set $A$ is a maximal consistent set of formulas of the form $\varphi(x;a)$ or $\neg\varphi(x;a)$ with $a\in A^{|y|}$. As usual denote by $S_{\Delta}(A)$ the space of complete $\Delta$-types over $A$. The {\em dual} of the partitioned formula $\varphi(x;y)$ is the formula $\varphi^*(y;x) = \varphi(x;y)$, where the role of the object and parameters variables are interchanged. Set $\Delta^*$ the set of dual formulas of formulas from $\Delta$. If $A$ is a subset of $\mathbb M$, then we can associate to each complete $\Delta$-type over $A$ its set of realizations in $\mathbb M$, getting a bijection between $S_{\Delta}(A)$ and $\mathcal S_{\Delta^*}$. Hence, one can check that
$$
\pi_{\mathcal S_{\Delta^*}}(n) = \max\{ |S_{\Delta}(A) | : A\subseteq M \text{ and } |A|=n\}.
$$
In addition, we also have
$$
{\rm vc}^{T} (n) =\sup \{ {\rm vc}(\mathcal S_{\varphi^*}):  \text{ $\varphi(x;y)$ is a formula of $T$ with } |x|=n \}.
$$

Regarding one-based groups, we have the following result which generalizes Proposition 4.10 and Corollary 4.13 from \cite{ADHMS}:

\begin{theorem}\label{T:DP-VC-Br}
Let $G$ be a one-based group. Then for any positive integer $m$:
$$
\dprk(G^m) = \mathrm{vc}^{\mathrm{Th}(G)}(m).
$$
Furthermore, if both are finite, then they agree with the breadth of the semilattice of commensurability classes of the $\acl^{\rm eq}(\emptyset)$-definable subgroups of $G^m$. 
\end{theorem}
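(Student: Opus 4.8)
The plan is to prove the displayed equality by sandwiching both quantities between the ${\rm breadth}$ of the semilattice $L_m$ of commensurability classes of the $\acl^{\rm eq}(\emptyset)$-definable subgroups of $G^m$. Note first that $G^m$ is again one-based: any definable subset of $(G^m)^k$ is a definable subset of $G^{mk}$, hence a finite boolean combination of cosets of $\acl^{\rm eq}(\emptyset)$-definable subgroups, and the algebraic closure of $\emptyset$ is unchanged. Consequently Corollary \ref{C:breadth=dp}, applied to $G^m$ in place of $G$, already gives that $\dprk(G^m)$ is finite if and only if ${\rm breadth}(L_m)$ exists, and that in this case $\dprk(G^m)={\rm breadth}(L_m)$. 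Thus the whole theorem, including the ``furthermore'' clause, reduces to proving $\mathrm{vc}^{\mathrm{Th}(G)}(m)={\rm breadth}(L_m)$, with the convention that both sides are $\infty$ when the breadth does not exist. This is the one-based generalisation of \cite[Proposition 4.10]{ADHMS}, and I would establish the two inequalities separately.

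For the upper bound $\mathrm{vc}^{\mathrm{Th}(G)}(m)\le {\rm breadth}(L_m)=:d$, I would use the dual description $\mathrm{vc}^{\mathrm{Th}(G)}(m)=\sup\{\mathrm{vc}(\mathcal S_{\varphi^*}):|x|=m\}$, so that it suffices to bound, for each formula $\varphi(x;y)$ with $|x|=m$, the number of complete $\varphi$-types over a finite set $S$ of size $n$ by $O(n^d)$. By one-basedness and compactness there is a fixed finite family $H_1,\dots,H_r$ of subgroups from $L_m$, depending only on $\varphi$, such that every instance $\varphi(x;a)$ is a boolean combination of cosets of the $H_j$. A $\varphi$-type over $S$ is then determined by recording, for each $H_j$, which of the finitely many $H_j$-cosets met by $S$ the realisation lies in, and the breadth hypothesis bounds the number of realisable such intersection patterns by a polynomial in $n$ of degree $d$. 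This counting step, which transports the combinatorial heart of \cite[Proposition 4.10]{ADHMS} to the present setting, is where I expect the real work to lie: since $G$ need not be abelian, I would first pass to the maximal abelian normal subgroup $A\trianglelefteq G$ of finite index (as in the second half of the proof of Proposition \ref{P:inpattern-in-1based}), replace each $H_j$ by $H_j\cap A^m$ at the cost of a finite index, and run the count inside the abelian group $A^m$, where the coset geometry is the one handled in \cite{ADHMS}.

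For the lower bound $\mathrm{vc}^{\mathrm{Th}(G)}(m)\ge {\rm breadth}(L_m)$, fix any $d\le {\rm breadth}(L_m)$. By Proposition \ref{P:inpattern-in-1based} there are $\acl^{\rm eq}(\emptyset)$-definable subgroups $H_1,\dots,H_d$ of $G^m$ together with an indiscernible array $(b_i^\alpha)_{\alpha<d,\,i<\omega}$ such that the cosets $\{x\in b_i^\alpha H_\alpha\}$ form an inp-pattern of depth $d$; in particular $\bigcap_\alpha H_\alpha$ has infinite index in every sub-intersection, so the $d$ coset directions cut a parameter set independently of one another. The formulas $\varphi_\alpha(x;y):=x\in yH_\alpha$, whose object variable $x$ has length $m$, then realise on the order of $n^d$ distinct $\{\varphi_1,\dots,\varphi_d\}$-types over a parameter set of size $n$ drawn from the array. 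Packaging this finite set of formulas into a single formula by the standard device of \cite{ADHMS1} yields a single $\varphi$ with $\mathrm{vc}(\mathcal S_{\varphi^*})\ge d$, hence $\mathrm{vc}^{\mathrm{Th}(G)}(m)\ge d$.

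Letting $d$ range over all values below ${\rm breadth}(L_m)$ proves the reverse inequality, and simultaneously shows that when ${\rm breadth}(L_m)$ does not exist the vc-density is infinite, matching $\dprk(G^m)=\infty$. Combining the two inequalities gives $\mathrm{vc}^{\mathrm{Th}(G)}(m)={\rm breadth}(L_m)=\dprk(G^m)$ in the finite case and $\infty=\infty$ otherwise, which is exactly the assertion of the theorem together with its furthermore clause.
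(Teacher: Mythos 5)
Your skeleton matches the paper's in outline: both arguments close a circle of inequalities through the breadth, using Corollary \ref{C:breadth=dp} (applied to $G^m$, which is indeed again one-based) to identify $\dprk(G^m)$ with $\mathrm{breadth}(L_m)$. Your lower bound $\mathrm{vc}^{\mathrm{Th}(G)}(m)\ge\mathrm{breadth}(L_m)$ is essentially correct and complete: along each row of the array produced by Proposition \ref{P:inpattern-in-1based} the cosets $b_i^\alpha H_\alpha$ are pairwise disjoint, so a parameter set of size $dN$ drawn from the array supports at least $N^d$ distinct $\Delta$-types, and a finite $\Delta$ can be packaged into a single formula without changing the vc-density (one point to tidy: the $H_\alpha$ are defined over $\acl^{\rm eq}(\emptyset)$, so a real tuple coding the relevant imaginary parameters must be absorbed into the parameter variables). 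This is where you genuinely diverge from the paper, which obtains this direction indirectly as $\mathrm{breadth}(L_m)=\dprk(G^m)\le\mathrm{vc}^{\mathrm{Th}(G)}(m)$, the last inequality being the general fact cited from \cite[Proposition 8.2]{Farre}; your argument is more self-contained.

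The genuine gap is the upper bound $\mathrm{vc}^{\mathrm{Th}(G)}(m)\le\mathrm{breadth}(L_m)$. Your uniformity-by-compactness step (a single finite family $H_1,\dots,H_r$ serving all instances of $\varphi$) is fine, but after it the naive count of $\varphi$-types over an $n$-element set gives only $O(n^r)$, and $r$ has nothing to do with $d=\mathrm{breadth}(L_m)$. The assertion that ``the breadth hypothesis bounds the number of realisable intersection patterns by a polynomial in $n$ of degree $d$'' is exactly the content of \cite[Proposition 4.10]{ADHMS} --- the very result the paper cites at this point --- and you leave it unproved, saying only that this is where the real work lies. Your proposed route to it also does not go through as stated: intersecting with the finite-index abelian normal subgroup $A$ and ``running the count inside $A^m$'' does not place you in the abelian-group setting treated by \cite{ADHMS}, because the groups $H_j\cap A^m$ are merely $\acl^{\rm eq}(\emptyset)$-definable in the (possibly richer) one-based structure, not p.p.-definable subgroups of a pure abelian group; you would further need an argument that the vc-density is unaffected by passing to the finite-index subgroup with its induced structure. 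In short, you must either reprove the counting proposition at the level of generality of an arbitrary family of subgroups of bounded breadth whose cosets generate all definable sets, or cite \cite[Proposition 4.10]{ADHMS} directly --- the latter is precisely what the paper does, and it makes the detour through $A^m$ unnecessary.
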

\begin{proof}
By \cite[Proposition 8.2]{Farre}, we already know that $\dprk(G^m)$ is bounded above by $\mathrm{vc}^{\mathrm{Th}(G)}(m)$. To show the other inequality, suppose that $\mathrm{vc}^{\mathrm{Th}(G)}(m) \ge d$ for some positive integer $d$. Then, \cite[Proposition 4.10]{ADHMS} yields that the semilattice of commensurability classes of $\acl^{\rm eq}(\emptyset)$-definable subgroups of $G^m$ has breadth at least $d$ and so $\dprk(G^m)\ge d$ by Corollary \ref{C:breadth=dp}.
\end{proof}

Since in a stable theory the weight and hence the dp-rank is sub-additive, see for instance \cite[Proposition 5.6.5]{Buechler}, we easily obtain:

\begin{corollary}\label{C:VC-Lin}
The {\rm vc}-density of the theory of a one-based group has linear 
growth, provided that the group has finite {\rm dp}-rank. Namely, if $G$ has finite {\rm dp}-rank, then 
$$
{\rm vc}^{\mathrm{Th}(G)}(m) = m \cdot \mathrm{vc}^{\mathrm{Th}(G)}(1).
$$
\end{corollary}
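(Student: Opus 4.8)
The plan is to reduce the statement to the additivity of $\dprk$ over powers, namely $\dprk(G^m) = m\cdot \dprk(G)$, and then transport everything to vc-density through Theorem \ref{T:DP-VC-Br}. That theorem gives $\mathrm{vc}^{\mathrm{Th}(G)}(m) = \dprk(G^m)$ for every $m$, and in particular $\mathrm{vc}^{\mathrm{Th}(G)}(1) = \dprk(G)$; since $G$ has finite dp-rank all these quantities are finite. Hence it suffices to bound $\dprk(G^m)$ by $m\cdot\dprk(G)$ from above and below.

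For the upper bound I would invoke the subadditivity of dp-rank in stable theories: writing an element of $G^m$ as a tuple $(x_1,\dots,x_m)$ with each $x_j$ ranging over $G$, subadditivity yields $\dprk(G^m)\le \sum_{j=1}^m \dprk(x_j) = m\cdot\dprk(G)$. This is the half flagged by the remark preceding the statement.

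For the lower bound I would exploit the group structure to glue together $m$ copies of an optimal inp-pattern of $G$. Put $d=\dprk(G)$, which by the characterisation of dp-rank is witnessed by an inp-pattern $(\varphi^\alpha(x,y^\alpha))_{\alpha<d}$ of depth $d$ for the type $x=x$ on $G$, with $k^\alpha$-inconsistent rows $(b_i^\alpha)_{i<\omega}$. Working in $G^m$ with object variable $\bar x=(x_1,\dots,x_m)$, I would consider the $md$ formulas $\psi^{j,\alpha}(\bar x,y):=\varphi^\alpha(x_j,y)$ for $1\le j\le m$ and $\alpha<d$, each fed by the corresponding array $(b_i^\alpha)_{i<\omega}$. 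Each row stays $k^\alpha$-inconsistent, as it only constrains the single coordinate $x_j$; and every path is consistent because, for each fixed $j$, the chosen formulas form a path in the original pattern of $G$ and hence are realised by some $g_j\in G$, so the independent choice $\bar x=(g_1,\dots,g_m)$ realises the whole path. This produces an inp-pattern of depth $md$ for $G^m$, whence $\dprk(G^m)\ge md = m\cdot\dprk(G)$.

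Combining the two bounds gives $\dprk(G^m)=m\cdot\dprk(G)$, and translating back through Theorem \ref{T:DP-VC-Br} yields the desired $\mathrm{vc}^{\mathrm{Th}(G)}(m)=m\cdot\mathrm{vc}^{\mathrm{Th}(G)}(1)$. The point demanding care is the lower bound: the equality is not a formal consequence of the $\dprk=\mathrm{vc}$ identity alone, and genuinely relies on the freedom to realise the $m$ coordinates of $G^m$ independently, which is exactly what makes the paths of the glued pattern consistent. Alternatively, one could obtain the same lower bound at the level of semilattices by pulling back, along the $m$ coordinate projections $G^m\to G$, a family of $\acl^{\mathrm{eq}}(\emptyset)$-definable subgroups of $G$ that witnesses breadth $d$ (as in Corollary \ref{C:breadth=dp}) and then checking the required index condition coordinatewise.
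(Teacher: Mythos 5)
Your proposal is correct and takes essentially the same route as the paper: the paper's one-line proof is precisely the translation through Theorem \ref{T:DP-VC-Br} combined with sub-additivity of weight (hence of $\dprk$-rank) in stable theories, which is your upper bound. The only difference is that you make explicit the superadditive direction $\dprk(G^m)\ge m\cdot\dprk(G)$ by gluing $m$ copies of an optimal inp-pattern coordinatewise; the paper treats this direction as immediate, and your argument for it is the standard, correct one.
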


\section{Abelian groups: Study by cases}\label{s:cases}

In the light of Corollary \ref{C:BasicEquation} and Szmielew's classification, to describe the $\dprk$-rank (or equivalently the {\rm vc}-density) of an abelian group we first should analyse the distinct families of infinite abelian groups appearing in a strict Szmielew group. 

We first prove some basic lemmas. We begin with the following fact on strong groups proved in \cite[Corollary 4.6]{cher-kap-sim}. We offer a proof for the sake of completeness. 

\begin{fact}\label{F:Strong}
If $A$ is a strong abelian group, then there is only a finite number of primes $p$ such that $A/pA$ is infinite. If in addition $A$ has $\dprk$-rank $k$, then there are at most $k$ many of these primes. 
\end{fact}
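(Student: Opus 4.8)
The plan is to prove the contrapositive: if there are infinitely many primes $p$ with $A/pA$ infinite, then $A$ admits an infinite inp-pattern and hence is not strong; the quantitative refinement will then follow from the very same construction by bounding the depth.

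First I would observe that for each prime $p$ the subgroup $pA$ is p.p.-definable (indeed $\emptyset$-definable, being the image of multiplication by $p$), and that $A/pA$ is infinite precisely when $pA$ has infinite index in $A$. Working inside a sufficiently saturated model $\mathbb M \equiv A$, so that $\mathbb M/p\mathbb M$ is infinite for each relevant prime, I would take $\varphi^p(x;y)$ to be the formula $p \mid (x-y)$, that is, $x-y \in p\mathbb M$. The two defining conditions of an inp-pattern are then checked separately. For the $2$-inconsistency of the row indexed by $p$: since $\mathbb M/p\mathbb M$ is infinite I can choose column parameters $b_0^p, b_1^p,\ldots$ in pairwise distinct cosets of $p\mathbb M$, and distinct cosets of $p\mathbb M$ are disjoint, so no element satisfies two instances $\varphi^p(x;b_j^p)$.

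For path-consistency: a path is a system of congruences $x \equiv c_p \pmod{p\mathbb M}$ over the chosen primes, and by compactness it suffices to realise every finite subsystem. Over finitely many distinct primes $p_1,\ldots,p_n$ this is solved explicitly by $x = \sum_i e_i c_{p_i}$, where the integers $e_i$ satisfy $e_i \equiv 1 \pmod{p_i}$ and $e_i \equiv 0 \pmod{p_j}$ for $j \neq i$ (such $e_i$ exist by the Chinese Remainder Theorem in $\mathbb Z$), using that $(e_j-1)c_{p_j}$ and each $e_i c_{p_i}$ with $i \neq j$ lie in $p_j\mathbb M$. This produces a genuine inp-pattern whose depth equals the number of chosen primes. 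Applying the construction to infinitely many such primes yields an infinite inp-pattern, contradicting strength; applying it to $k+1$ such primes yields an inp-pattern of depth $k+1$, contradicting $\dprk(A)=k$, since the $\dprk$-rank is the supremum of the depths of inp-patterns.

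The main technical point, and the only place the hypothesis of distinct (hence coprime) primes is essential, is the path-consistency step: coprimality via the Chinese Remainder Theorem guarantees that the congruence conditions imposed by different rows never interfere, whereas the row-inconsistency is immediate from the infinitude of $A/pA$. I would emphasise that one cannot route this argument through the infinite-intersection half of Proposition \ref{P:inpattern-in-1based} applied to the groups $pA$ themselves, since their total intersection may be trivial (for instance in $\mathbb Z^{(\omega)}$, where $\bigcap_{p} pA = 0$ although every $A/pA$ is infinite); the inp-pattern must therefore be exhibited directly as above, which is legitimate precisely because path-consistency is a purely finitary condition handled by compactness.
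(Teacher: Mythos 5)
Your proof is correct, but it takes a genuinely different route from the paper's. The paper first replaces $A$ by its elementarily equivalent strict Szmielew group, discards (via Corollary \ref{C:BasicEquation}) all summands irrelevant to the primes $\mathcal P = \{p : A/pA \text{ infinite}\}$, and then applies Proposition \ref{P:inpattern-in-1based} to the family $\{pA\}_{p\in\mathcal P}$: in the Szmielew form $A=\bigoplus_{p\in\mathcal P}A'_p$, multiplication by $p$ is an automorphism of each $A'_q$ with $q\neq p$, so $\bigl[\bigcap_{p\neq q}pA:\bigcap_{p}pA\bigr]=|A'_q/qA'_q|=\infty$ and the intersection criterion holds on the nose. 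You instead build the inp-pattern by hand: rows are cosets of $p\mathbb M$, $2$-inconsistency comes from $\mathbb M/p\mathbb M$ being infinite, and path consistency comes from the Chinese Remainder Theorem plus compactness. Both arguments ultimately rest on coprimality of distinct primes---the paper's through the splitting of the Szmielew group, yours through CRT---but yours is self-contained: it needs neither Szmielew's classification nor Proposition \ref{P:inpattern-in-1based}, it works with parameters taken in $A$ itself, and it persists verbatim under expansions of the group by extra structure, since only group-language formulas appear. The paper's version is a one-line application of machinery it has already built and reuses throughout. One quibble: your closing claim that the argument \emph{cannot} be routed through Proposition \ref{P:inpattern-in-1based} is overstated. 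Your example $\mathbb Z^{(\omega)}$ correctly shows that the infinite-intersection criterion for $\{pA\}$ is sensitive to the choice of model, but it is restored either by passing to the strict Szmielew model (the paper's move) or by computing the index in a sufficiently saturated elementary extension, where compactness supplies elements divisible by all primes except $q$ lying in infinitely many distinct cosets modulo $q\mathbb M$; so the proposition is usable, provided it is not applied blindly in an arbitrary small model.
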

\begin{proof}
Let $\mathcal P$ be the set of primes $p$ such that $A/pA$ is infinite. Since $A$ is elementarily equivalent to a strict Szmielew group, we may assume by Corollary \ref{C:BasicEquation} that $A$ is indeed the group 
$$
\bigoplus_{p\in\mathcal P} \left( \bigoplus_{n>0} \mathbb{Z}(p^n)^{(\alpha_{p,n})} \oplus (\mathbb Z_{(p)})^{(\beta_p)} \right)
$$
with for each prime $p$ in $\mathcal P$ either $\beta_p=\omega$, $\alpha_{p,n}=\omega$ for some $n$ or $\alpha_{p,n}\neq 0$ for infinitely many $n$.  Then, the family $\{pA\}_{p\in \mathcal P}$ witnesses that $A$ has an inp-pattern of depth $|\Pp|$. This yields the result.
\end{proof}

\begin{lemma}\label{L:UnbddExp+Dpmin}
Let $A\oplus B$ be a strong abelian group of unbounded exponent such that every  p.p.-definable subgroup of $B$ of unbounded exponent has finite index and every p.p.-definable subgroup of $B$ of finite exponent is finite. Then 
$$
\dprk(A \oplus B)= 
\begin{cases}
\dprk(A)+ 1 & \text{if $A$ has finite exponent},\\
\dprk(A) & \text{otherwise}.
\end{cases}
$$
Moreover, in the latter case, a family of p.p.-definable subgroups witnesses an inp-pattern in $A\oplus B$ if and only if it witnesses an inp-pattern in $A$.  
\end{lemma}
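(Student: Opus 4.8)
The plan is to push everything through the factorisation $\varphi(A\oplus B)=\varphi(A)\oplus\varphi(B)$ of Fact~\ref{F:pp-commutes-with-sum}, which turns each index of the form $[\bigcap_{k\neq\beta}H_k:\bigcap_k H_k]$ appearing in Proposition~\ref{P:inpattern-in-1based} into the product of an $A$-index and a $B$-index. By Proposition~\ref{P:inpattern-in-1based} and Remark~\ref{R:Inp-pattern} it suffices to work with families of p.p.-definable subgroups and the index conditions they satisfy, and by Corollary~\ref{C:BasicEquation} both $A$ and $B$ are strong.

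First I would dispose of $B$. We may assume $B$ is infinite, so by the second hypothesis $B$ itself has unbounded exponent. The crucial point is a dichotomy for its p.p.-subgroups: by Fact~\ref{F:prufer} every such $\varphi(B)$ is a conjunction of clauses $mx=0$ and $p^n\mid p^mx$; if the conjunction contains a genuine torsion clause $mx=0$ then $\varphi(B)\subseteq B[m]$ is finite by hypothesis, whereas if $\varphi$ is purely divisibility then $\varphi(B)$ has unbounded exponent by Lemma~\ref{L:p.p.-non-trivial} and hence finite index by hypothesis. From this one reads off that no pair $H_0,H_1$ of p.p.-subgroups of $B$ can have both drop-indices infinite (if both are divisibility the intersection has finite index in each, and if one is finite both its drops are bounded by its order), so $B$ admits no inp-pattern of depth $2$ and $\dprk(B)=1$. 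Corollary~\ref{C:BasicEquation} then gives $\max\{\dprk(A),1\}\le\dprk(A\oplus B)\le\dprk(A)+1$, leaving one inequality in each case.

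In the finite-exponent case I would exhibit an inp-pattern of depth $\dprk(A)+1$. Write $A=\bigoplus_p A_p$, a finite sum of its p.p.-definable primary parts, so $\dprk(A)=\sum_p\dprk(A_p)$ with the patterns combining. For each $p$ I would take a pattern in $A_p$ and extend its subgroups to $A$ by rewriting every clause through pure $p$-divisibility: this is possible since $p^{N_p}A_p=0$ lets one rewrite $A_p[p^e]$ as $\{x:p^{e+N_p}\mid p^ex\}$. The resulting subgroups still witness the depth-$\dprk(A)$ pattern in $A$, but being divisibility-only they now have unbounded-exponent shadow in $B$, so their $B$-intersection $K$ has finite index in $B$. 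I would then adjoin the row $\{x:nx=0\}$ with $nA=0$, whose $A$-part is all of $A$ and whose $B$-part is the finite group $B[n]$; factoring the drop-indices (the $A$-factor handles the old rows since the new row is full on $A$, and the $B$-factor $[K:K\cap B[n]]=\infty$ handles the new one) shows the enlarged family has depth $\dprk(A)+1$. With the upper bound this gives equality. The main obstacle lies here: the naive pattern would retain torsion clauses, collapsing $K$ to a finite group and destroying the new row, so the divisibility re-expression is exactly what makes the construction go through.

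Finally, in the unbounded-exponent case I would prove the ``moreover'' directly, which yields $\dprk(A\oplus B)=\dprk(A)$. The converse is immediate: if $\{G_k\}$ is a pattern in $A$ then every $A$-drop-index is infinite, hence so is every total drop-index. For the forward direction, take $\{G_k\}$ witnessing a pattern in $A\oplus B$, factor each drop-index, and set $S=\{k:G_k(B)\text{ finite}\}$. If $|S|\neq 1$ then every $B$-drop-index is finite (every sub-intersection still meets a finite $G_k(B)$, or none does and all are finite index), forcing every $A$-drop-index infinite, so $\{G_k(A)\}$ is a pattern in $A$. If $S=\{\beta_0\}$, the $A$-drops for $\beta\neq\beta_0$ are infinite for the same reason; for $\beta=\beta_0$ I would note that $M:=\bigcap_{k\neq\beta_0}G_k(A)$ is defined by a purely divisibility formula, hence of unbounded exponent by Lemma~\ref{L:p.p.-non-trivial}, and writing $G_{\beta_0}(A)=M'\cap A[m]$ with $M'$ again divisibility-only and of unbounded exponent, the identity $[M':M'[m]]=|mM'|=\infty$ (Lemma~\ref{L:p.p.-non-trivial} once more) gives an infinite $A$-drop-index at $\beta_0$ as well. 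Thus $\{G_k(A)\}$ witnesses a depth-$\kappa$ pattern in $A$, establishing the correspondence and the equality.
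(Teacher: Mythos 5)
Your proposal is correct, and its skeleton matches the paper's proof: reduce everything to families of p.p.-definable subgroups and their drop-indices via Proposition \ref{P:inpattern-in-1based} and Remark \ref{R:Inp-pattern}, factor each index through $\varphi(A\oplus B)=\varphi(A)\oplus\varphi(B)$, exploit the finite-or-finite-index dichotomy for p.p.-subgroups of $B$ (torsion clause versus pure divisibility, via Fact \ref{F:prufer} and Lemma \ref{L:p.p.-non-trivial}), and, in the unbounded-exponent case, handle the exceptional row with the observation that a finite-exponent subgroup cannot have finite index in an unbounded-exponent one; your case split on $S=\{k: G_k(B) \text{ finite}\}$ is exactly the paper's contradiction argument run forwards. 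The one place you genuinely diverge is the finite-exponent case. To force the $B$-shadows of the lifted pattern to have finite index, the paper simply replaces each witness $\varphi_i(A\oplus B)$ by the p.p.-definable group $\varphi_i(A\oplus B)+m(A\oplus B)=\varphi_i(A)\oplus\bigl(\varphi_i(B)+mB\bigr)$, where $m$ is the exponent of $A$, and adjoins the row $(A\oplus B)[m]=A\oplus B[m]$; you instead pass to the primary decomposition $A=\bigoplus_p A_p$ and rewrite each pattern formula as a pure divisibility formula using $A_p[p^e]=\{x: p^{e+N_p}\mid p^e x\}$. Both devices achieve the same end; the paper's is shorter because it needs no rewriting and no recombination of patterns across primary components, while yours has the mild virtue of producing witnesses that are divisibility-only on the nose. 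Finally, one small step of yours should be tightened: the $A$-drop-index at $\beta_0$ is $[M:M\cap M'\cap A[m]]$, and the identity you invoke, $[M':M'[m]]=|mM'|$, concerns the wrong group. Apply it to $M$ itself, which you have already shown is divisibility-only and hence of unbounded exponent by Lemma \ref{L:p.p.-non-trivial}: since $M\cap G_{\beta_0}(A)\subseteq M\cap A[m]=M[m]$ and $[M:M[m]]=|mM|=\infty$, the drop-index at $\beta_0$ is infinite, as you need.
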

\begin{proof}
Suppose first that $A$ has unbounded exponent. By Corollary \ref{C:BasicEquation} we know that $\dprk(A)\le \dprk(A\oplus B)$. To prove the other inequality, it suffices to consider an {\rm inp}-pattern of finite depth $\kappa$ in $A\oplus B$, which is witnessed by a family of p.p.-definable subgroups $\{N_i\}_{i<\kappa}$ by Remark \ref{R:Inp-pattern}. Each $N_i$ is defined by a formula $\varphi_i(x)$ of the form 
$$
\bigwedge_{j} (n_{i,j}|m_{i,j}x)\wedge (m_ix=0)
$$
with $0 \le m_{i,j} < n_{i,j}$ and $0\le m_i$. Note that the left hand side formula defines a subgroup of unbounded exponent in $A\oplus B$ by Lemma \ref{L:p.p.-non-trivial}. Thus, we have $N_i = \varphi_i(A) \oplus \varphi_i(B)$ where $\varphi_i(B)$ is finite if $m_i>0$ and it has finite index in $B$ otherwise. 

We see that the family $\{\varphi_i(A)\}_{i<\kappa}$ yields an {\rm inp}-pattern of depth $\kappa$ in $A$. Otherwise, by Proposition \ref{P:inpattern-in-1based} there exists some index $i_0<\kappa$ such that 
$$
\left[ \bigcap_{i\neq i_0} \varphi_i(A) : \bigcap_i \varphi_i(A) \right] < \infty.
$$
Necessarily, this implies, together with the assumption on $\{N_i\}_{i<\kappa}$, that 
$$
\left[ \bigcap_{i\neq i_0} \varphi_i(B) : \bigcap_i \varphi_i(B) \right] = \infty.
$$
Thus, the subgroup $\bigcap_{i\neq i_0} \varphi_i(B)$ is infinite. We see using the assumption that $\varphi_i(B)$ has unbounded exponent for $i\neq i_0$ and hence $m_i=0$ for $i\neq i_0$. Again by assumption, the subgroup $\varphi_{i_0}(B)$ must have finite exponent and so $m_{i_0} > 0$. Therefore, as $A$ has unbounded exponent, the p.p.-definable subgroup $\bigcap_{i\neq i_0} \varphi_i(A)$ has unbounded exponent by Lemma \ref{L:p.p.-non-trivial}, a contradiction since a finite exponent group cannot have finite index in a group of unbounded exponent.

Assume now that $A$ has finite exponent, say $m$. Since $B$ must have unbounded exponent, it follows immediately from the assumption that $B$ is $\dprk$-minimal and so we only need to show that $\dprk(A)+1 \leq \dprk(A\oplus B) $, by Corollary \ref{C:BasicEquation}. For this, let $\{\varphi_i(x)\}_{i<\kappa}$ be a family of p.p. formulas defining subgroups that witness an {\rm inp}-pattern of finite depth $\dprk(A)=\kappa$ in $A$. Consider the following subgroups 
$$
\varphi_i(A)\oplus (\varphi_i(B)+mB) \ \text{for $i<\kappa$} \ \text{ and } \ A\oplus B[m],
$$ 
which are p.p.-definable since $m(A\oplus B)=\{0\}\oplus mB$ and $(A\oplus B)[m] = A\oplus B[m]$. Now, using the fact that $(mB)[m]$ has infinite index in $mB$ by our initial assumptions and using Proposition \ref{P:inpattern-in-1based}, it is easy to see that the family formed by all these subgroups yields an inp-pattern of depth $\kappa+1$.
\end{proof}

\subsection{Torsion-free groups} A torsion-free abelian group $A$ is elementarily equivalent to one of the following strict Szmielew groups:
$$
\mathbb Q^{(\omega)} \ \text{ or } \ \bigoplus_{p} (\mathbb Z_{(p)})^{(\beta_p)}.
$$
Note that the former group is divisible and so $\dprk$-minimal, as well as is each summand in the second group. Indeed, their lattice of commensurability classes of p.p.-definable subgroups is a chain. Also, note that in a torsion-free abelian group, every non-trivial p.p.-definable subgroup is given by a formula of the form $\bigwedge_j (n_{j}|x)$.
Moreover, in the second group the p.p. formula $p|x$ yields an infinite index subgroup if and only if $\beta_p=\omega$. That is, the set $\mathrm{Tf}_{\ge\aleph_0}(A)$ consists of all primes $p$ such that $pA$ has infinite index in $A$.

\begin{proposition}\label{P:T-FStrong}
Let $A$ be a torsion-free abelian strict Szmielew group.  Then
$A$ is strong if and only if $\mathrm{Tf}_{\geq\aleph_0}(A)$ is finite, and then the {\rm dp}-rank is given by
$$
\dprk(A)  = \max\{1,|\mathrm{Tf}_{\ge\aleph_0}(A)|\}.
$$ 
Furthermore, an inp-pattern of maximal depth is given by the formula $x=y$ if $\beta_p<\omega$ for every $p$ and by the family of formulas $p^l|x$ for $p$ with $\beta_p=\omega$ and any integer $l\geq 1$, otherwise.
\end{proposition}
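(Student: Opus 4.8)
The plan is to reduce to the two possible strict Szmielew forms and, in the main one, to compute the breadth of $\LPP(A)$ and invoke Corollary~\ref{C:breadth=dp}, which identifies this breadth with $\dprk(A)$. If $A\equiv\mathbb Q^{(\omega)}$ then $A$ is divisible, hence $\dprk$-minimal as already noted, and $\mathrm{Tf}_{\ge\aleph_0}(A)=\emptyset$; since $A$ is infinite, the trivial subgroup $\{0\}$ has infinite index so $x=y$ witnesses an inp-pattern of depth $1$, matching $\max\{1,0\}=1$. I may therefore assume $A\equiv\bigoplus_p(\mathbb Z_{(p)})^{(\beta_p)}$ and set $P=\mathrm{Tf}_{\ge\aleph_0}(A)=\{p:\beta_p=\omega\}$ and $k=|P|$.

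First I would pin down the shape of $\LPP(A)$. Every nontrivial p.p.-definable subgroup of a torsion-free group has the form $\bigcap_m n_mA$; using the elementary identity $aA\cap bA=abA$ for coprime $a,b$, such a subgroup equals $\bigcap_p p^{e_p}A$ with $e_p=\max_m v_p(n_m)$ and only finitely many $e_p\neq 0$. Since $\beta_q<\omega$ forces $[A:qA]<\infty$ for $q\notin P$, the part $\bigcap_{q\notin P}q^{e_q}A$ has finite index, so $\bigcap_p p^{e_p}A$ is commensurable with $\bigcap_{p\in P}p^{e_p}A$. This identifies the semilattice of nontrivial commensurability classes with the product of chains $\prod_{p\in P}(\mathbb N,\max)$, the meet being the coordinatewise maximum and the top being $A$; there is in addition the bottom class of $\{0\}$.

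Next I would read off the breadth. For the upper bound, given finitely many p.p.-definable subgroups: if one is commensurable with $\{0\}$ the whole intersection is $\{0\}$ and that single subgroup realizes it; otherwise, inside $\prod_{p\in P}(\mathbb N,\max)$ the coordinatewise maximum of the family is attained by picking, for each of the $k$ coordinates, an index maximizing it, so at most $k$ subgroups suffice. Hence $\mathrm{breadth}(\LPP(A))\le\max\{1,k\}$. For the lower bound, when $k\ge 1$ the classes $[pA]$ with $p\in P$ are the standard basis vectors of $\prod_{p\in P}(\mathbb N,\max)$, and no proper subfamily has the same coordinatewise maximum, so the breadth is $\ge k$; when $k=0$ the subgroup $\{0\}$ has infinite index, forcing breadth $\ge 1$. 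By Corollary~\ref{C:breadth=dp} this gives $\dprk(A)=\max\{1,k\}$, which is finite, so $A$ is strong.

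It remains to settle the converse and the explicit witness. If $P$ is infinite then $A/pA$ is infinite for infinitely many primes $p$, since $[A:pA]=p^{\beta_p}$ is infinite exactly when $p\in P$; by Fact~\ref{F:Strong} such an $A$ is not strong, which completes the equivalence ``strong $\iff\mathrm{Tf}_{\ge\aleph_0}(A)$ finite''. For the inp-pattern of maximal depth: if all $\beta_p<\omega$ it is $x=y$ as above; otherwise, for any fixed $l\ge 1$ I would feed the family $\{p^l\mid x\}_{p\in P}$ into Proposition~\ref{P:inpattern-in-1based}, checking for each $p_0\in P$ that $\bigcap_{p\in P}p^lA=(\prod_{p\in P}p^l)A$ and $\bigcap_{p\ne p_0}p^lA=(\prod_{p\ne p_0}p^l)A$, so that multiplication by $\prod_{p\ne p_0}p^l$ yields $[\bigcap_{p\ne p_0}p^lA:\bigcap_{p\in P}p^lA]=[A:p_0^lA]=\infty$. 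The main obstacle is the bookkeeping of the second step: justifying the commensurability reduction to a product of chains and handling the bottom class correctly in the breadth count; everything else comes down to the index computation just indicated.
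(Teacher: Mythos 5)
Your proof is correct, but it follows a genuinely different route from the paper's. The paper never describes $\LPP(A)$ explicitly: it splits $A=B\oplus C$, where $B$ collects the summands with $\beta_p=\omega$ and $C$ the remaining ones, notes that each $(\mathbb{Z}_{(p)})^{(\omega)}$ and the group $C$ are $\dprk$-minimal, and assembles the answer from Corollary \ref{C:BasicEquation} together with Lemma \ref{L:UnbddExp+Dpmin} (used to discard $C$); both the lower bound and the direction ``strong implies $\mathrm{Tf}_{\ge\aleph_0}(A)$ finite'' come from Fact \ref{F:Strong}, whose proof already exhibits the inp-pattern $\{pA\}_{p}$. You instead classify all p.p.-definable subgroups up to commensurability (via Fact \ref{F:prufer} and the identity $aA\cap bA=abA$ for coprime $a,b$), identify $\LPP(A)$ with a finite product of chains plus a bottom class, read off the breadth by hand, and quote Corollary \ref{C:breadth=dp}; you use Fact \ref{F:Strong} for the converse exactly as the paper does, and your family $\{p^l A\}_{p\in P}$, justified through Proposition \ref{P:inpattern-in-1based}, is precisely the witness demanded by the statement. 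What each approach buys: your computation is self-contained and makes the whole commensurability semilattice --- hence every maximal inp-pattern --- visible at once, while the paper's decomposition argument avoids any structural computation and reuses lemmas that are needed anyway for the torsion cases, where an explicit description of $\LPP$ would be far more laborious. One small point you should make explicit: Corollary \ref{C:breadth=dp} is stated for the semilattice of $\acl^{\rm eq}(\emptyset)$-definable subgroups, so applying it to the p.p. semilattice $\LPP(A)$ requires Remark \ref{R:Inp-pattern} to pass between the two; this is harmless, and the paper does the same implicitly (e.g.\ in Proposition \ref{P:BoundedExponent}), but it deserves a sentence.
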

\begin{remark}
By Section \ref{ss:szmielew groups}, the equation for the {\rm dp}-rank holds for any torsion-free abelian group. 
\end{remark}
\begin{proof} 
 By the discussion above it is enough to consider the case when $A$ is 
$$
\bigoplus_{p} (\mathbb Z_{(p)})^{(\beta_p)}.
$$ 
If $A$ is strong then $\mathrm{Tf}_{\geq\aleph_0}(A)$ is finite by Fact \ref{F:Strong}. For the other direction, put $A=B\oplus C$, where $B$ is the direct sum of all $(\mathbb{Z}_{(p)})^{(\beta_p)}$ for which $\beta_p=\omega$ and $C$ is direct sum of all of those groups for which $\beta_p$ is finite. Since $B$ is a finite direct sum of $\dprk$-minimal groups (indeed, their p.p.-definable subgroups form a chain), by using Corollary \ref{C:BasicEquation} it remains to show that $C$ is strong. The abelian group $C$ has no non-trivial p.p.-definable subgroups of infinite index, hence it is $\dprk$-minimal and so  strong.

We assume $A$ is strong.
By Lemma \ref{L:UnbddExp+Dpmin}, we may further assume that $\beta_p=\omega$ for every $p$. Hence, we immediately obtain that $\dprk(A)=|\mathrm{Tf}_{\ge\aleph_0}(A)|$ by Corollary \ref{C:BasicEquation} and again Fact \ref{F:Strong}. 

Finally, the last part of the statement follows from Fact \ref{F:Strong}, since the only non-trivial p.p-definable proper  subgroups of $A$ are given by formulas of the form $m|x$ for $0<m$.
\end{proof}

\begin{corollary}\label{C:TF}
If $A$ is a strong torsion-free abelian group, then it has finite $\dprk$-rank which is bounded above by the weight of its generic type.  
\end{corollary}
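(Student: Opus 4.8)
The plan is to treat the two assertions in turn. For the finiteness of the dp-rank, note that since $A$ is strong, Fact~\ref{F:Strong} shows that only finitely many primes $p$ have $A/pA$ infinite; as $\mathrm{Tf}_{\ge\aleph_0}(A)$ is precisely the set of primes $p$ for which $pA$ has infinite index in $A$, it is finite, and Proposition~\ref{P:T-FStrong} (which, by the remark following it, applies to every torsion-free abelian group) gives $\dprk(A)=\max\{1,k\}<\infty$, where $k=|\mathrm{Tf}_{\ge\aleph_0}(A)|$. It then remains to show that a generic type $g$ of $A$, over a small model $M$, has weight at least $\max\{1,k\}$; since by the Fact characterising dp-rank in stable theories the weight of any complete type is at most $\dprk(A)$, this will in fact give equality.

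For the lower bound on the weight I would realise the dp-rank by an inp-pattern based at $g$. Here I use that, for a complete type, the weight equals its dp-rank, and hence the supremum of the depths of the inp-patterns all of whose paths are consistent with it; so the goal becomes an inp-pattern of depth $k$ for the partial type $g(x)$. Enumerate $\mathrm{Tf}_{\ge\aleph_0}(A)=\{p_1,\dots,p_k\}$, so that each $p_jA$ is p.p.-definable of infinite index and each $A/p_jA$ is infinite. The reduction maps $r_j\colon A\to A/p_jA$ combine into a surjection $\rho=(r_1,\dots,r_k)$ onto $\prod_{j\le k}A/p_jA$ with kernel $\bigcap_j p_jA=(\prod_j p_j)A$, by coprimality of the $p_j$; moreover, since the primes are pairwise coprime, the quotients $A/p_jA$ are pairwise orthogonal, so the generic type of the product is the independent join of the generics of the factors. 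I would then consider the array of cosets $\{x\in b_i^{\,j}+p_jA\}_{j\le k,\,i<\omega}$, choosing the parameters so that the residues $r_j(b_i^{\,j})$ form a mutually $M$-independent array of realisations of the generic type of $A/p_jA$, with distinct cosets within each fixed row. Each row is then $2$-inconsistent, as distinct cosets of $p_jA$ are disjoint.

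The main obstacle will be the consistency of the paths. Fixing a path $\eta$ and setting $\bar d_j=r_j(b^{\,j}_{\eta(j)})$, the transversal $(\bar d_j)_j$ is $M$-independent with each $\bar d_j$ generic in $A/p_jA$, so by orthogonality it realises the generic type of $\prod_j A/p_jA$ over $M$. Its $\rho$-fibre is a coset of the infinite-index subgroup $(\prod_j p_j)A$, and a generic element of this fibre over $M$ together with the $\bar d_j$ remains generic in $A$ over $M$, by transitivity of genericity along $\rho$ and the fact that the push-forward of a generic type is generic. Any such element realises $g$ and lies in each prescribed coset, which establishes the consistency of the path and hence an inp-pattern of depth $k$ for $g$. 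Finally, as $A$ is infinite the type $g$ is non-algebraic, so its weight is at least $1$ as well; combining, the weight of $g$ is at least $\max\{1,k\}=\dprk(A)$, as required. The one delicate point I would spell out in full is the orthogonality of the quotients $A/p_jA$ for distinct primes, together with the resulting independence of generic residues; granted that, the rest is routine bookkeeping with cosets.
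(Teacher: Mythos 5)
Your first half is exactly the paper's argument: strongness together with Fact \ref{F:Strong} and Proposition \ref{P:T-FStrong} gives $\dprk(A)=\max\{1,k\}<\infty$ with $k=|\mathrm{Tf}_{\ge\aleph_0}(A)|$. For the weight bound you then genuinely diverge from the paper, which at this point simply quotes \cite[Proposition 2.1]{KruPil} (if the generic type has finite weight $k'$, then at most $k'$ primes $p$ have $A/pA$ infinite) and is done. You instead re-prove that proposition by hand, which is legitimate; moreover your construction already contains the right ingredients to do so \emph{without} any inp-pattern: once you have $M$-independent residues $\bar d_j=r_j(b^j_{\eta(j)})$, each generic in $A/p_jA$, and a realisation $a$ of a generic type with $r_j(a)=\bar d_j$, then $\bar d_j\in\dcl(Ma)\setminus\acl(M)$ forces $\tp(\bar d_j/Ma)$ to be algebraic and hence forking over $M$, so by symmetry $a$ forks with each $\bar d_j$ over $M$; the $M$-independent family $(\bar d_j)_{j<k}$ thus witnesses weight at least $k$ directly from the definition of weight.

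Two of your steps, as written, have real gaps. First, ``for a complete type, the weight equals its dp-rank'' does not follow from the Fact quoted in the paper: that Fact says $\dprk(g)$ is the supremum of the weights of complete types \emph{extending} $g$, possibly over larger parameter sets, which yields only the trivial inequality $w(g)\le\dprk(g)$. The inequality you actually use, $w(g)\ge\dprk(g)$, amounts to weight not increasing when passing to extensions; this is a theorem (Adler; Onshuus--Usvyatsov) that must be cited or proved, or better, avoided entirely via the direct weight witness above. Second, ``any such element realises $g$'' is not what your fibre argument gives: it gives an element realising \emph{some} generic type over $M$. If $A$ is not connected --- e.g.\ if it has summands with $A/qA$ finite and nontrivial, as for models of $\mathrm{Th}(\mathbb{Z})$ --- there are many generic types over $M$, distinguished by their cosets modulo finite-index definable subgroups, and every path of your pattern must be consistent with the one fixed $g$. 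This is repairable: in a torsion-free group every finite-index p.p.-definable subgroup is of the form $nA$ with $n$ coprime to $p_1\cdots p_k$, so the coset conditions of $g$ modulo finite-index subgroups are compatible, by coprimality, with the prescribed cosets of the $p_jA$, while every $M$-definable coset of an infinite-index subgroup is contained in an $M$-represented coset of some $p_jA$ and hence is disjoint from your path coset (whose residues were chosen new over $M$); alternatively, note that all generic types over a model are translates of one another by elements of $A(M)$ and so have equal weight, whence bounding the weight of some generic type suffices.
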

\begin{proof}
The equation from the previous proposition yields that a strong torsion-free abelian group $A$ has finite $\dprk$-rank. In particular, some (any) generic type of $A$ has finite weight, say $k$, and therefore, there are only $k$ many primes $p$ such that $A/pA$ is infinite by \cite[Proposition 2.1]{KruPil}. Consequently, the $\dprk$-rank of $A$ is at most $k$ by the previous result.
\end{proof}
\subsection{Divisible torsion groups} A divisible torsion abelian group $A$ is elementarily equivalent to a group of the form:
$$
\bigoplus_{p} \mathbb Z(p^\infty)^{(\gamma_p)}.
$$
It is easy to see that every p.p.-definable subgroup of a divisible torsion group is given by a formula of the form $mx=0$ for some integer $m\geq 0$.
Using the description of p.p-definable groups, it is easy to see that the group $\mathbb Z(p^\infty)$ is $\dprk$-minimal since each p.p.-definable proper subgroup is finite. Furthermore, recall that a prime $p$ belongs to $\mathrm{D}_{\ge\aleph_0}(A)$ if and only if $\gamma_p = \omega$. 

\begin{proposition}\label{P:Divisible+Torsion}
If $A$ is a divisible torsion abelian strict Szmielew group, then it is $\omega$-stable, hence strong, and 
$$
\dprk(A) = \max\{1,|\mathrm{D}_{\ge\aleph_0}(A)|\}.
$$
Furthermore, an inp-pattern of depth $n$ is witnessed by the family of p.p.-definable subgroups 
$$
\left\{\bigoplus_{q\neq p} \mathbb Z(q^\infty)^{(\omega)}[q^r]\right\}_{p\in \mathcal{P}}
$$
for any fixed positive integer $r$ and $\mathcal{P}$ a subset of $\mathrm{D}_{\geq\aleph_0}(A)$ of size $n$.
\end{proposition}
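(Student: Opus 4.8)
The plan is to work throughout with the observation recalled just before the statement that every p.p.-definable subgroup of $A=\bigoplus_p\mathbb Z(p^\infty)^{(\gamma_p)}$ is of the form $A[m]=\{x:mx=0\}$ for some integer $m\ge0$, and to abbreviate $\mathcal Q=\mathrm D_{\ge\aleph_0}(A)=\{p:\gamma_p=\omega\}$. For $\omega$-stability I would verify the minimal chain condition characterising it from Section 2. Writing $v_p$ for the $p$-adic valuation, one has $A[m']\subseteq A[m]$ iff $v_p(m')\le v_p(m)$ for every prime $p$ with $\gamma_p\ge1$. Given a descending chain, after discarding a possible initial term $A[0]=A$ all remaining $m_i$ are positive integers; their valuations are bounded by those of $m_1$ and vanish outside the finitely many primes dividing $m_1$, so $\sum_{p\mid m_1}v_p(m_i)$ is a non-negative integer that strictly decreases along strict inclusions. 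Hence the chain stabilises and $A$ is $\omega$-stable.

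The engine for the $\dprk$-rank is the following index computation. If $H_\alpha=A[m_\alpha]$ with each $m_\alpha\ge1$, then $\bigcap_\alpha A[m_\alpha]=A[\gcd_\alpha m_\alpha]$, and for $k\mid k'$ the index $[A[k']:A[k]]$ is infinite precisely when some prime $p\in\mathcal Q$ has $v_p(k')>v_p(k)$: for $\gamma_p$ finite the factor $p^{(v_p(k')-v_p(k))\gamma_p}$ is finite, whereas for $\gamma_p=\omega$ and $v_p(k')>v_p(k)$ it is infinite. Applying this with $k'=\gcd_{\alpha\ne\beta}m_\alpha$ and $k=\gcd_\alpha m_\alpha$, the index $[\bigcap_{\alpha\ne\beta}A[m_\alpha]:\bigcap_\alpha A[m_\alpha]]$ is infinite exactly when there is a prime $p\in\mathcal Q$ with $v_p(m_\beta)<v_p(m_\alpha)$ for all $\alpha\ne\beta$.

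For the upper bound, by Proposition \ref{P:inpattern-in-1based} and Remark \ref{R:Inp-pattern} any inp-pattern in $A$ is witnessed by proper subgroups $A[m_\alpha]$ with $m_\alpha\ge1$ (a whole-group row is never $k$-inconsistent). Consider a pattern of depth $\kappa\ge2$, so each $\gcd_{\alpha\ne\beta}m_\alpha$ is a positive integer; the pattern condition of the proposition, read through the previous paragraph, assigns to every $\beta<\kappa$ a prime $p_\beta\in\mathcal Q$ with $v_{p_\beta}(m_\beta)<v_{p_\beta}(m_\alpha)$ for all $\alpha\ne\beta$. This assignment is injective, since $p_\beta=p_{\beta'}$ with $\beta\ne\beta'$ would give $v_{p_\beta}(m_\beta)<v_{p_\beta}(m_{\beta'})<v_{p_\beta}(m_\beta)$, absurd; hence $\kappa\le|\mathcal Q|$. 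This also shows $A$ is strong, in accordance with $\omega$-stability: an infinite pattern would, fixing $\alpha_0$, force every $p_\beta$ with $\beta\ne\alpha_0$ to divide $m_{\alpha_0}$, so the positive integer $m_{\alpha_0}$ would have infinitely many distinct prime divisors. Since a single $A[p]$ with $\gamma_p\ge1$ already yields a depth-$1$ pattern (as $A$ is infinite), we obtain $\dprk(A)\le\max\{1,|\mathcal Q|\}$.

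For the matching lower bound I would exhibit the family in the statement. Fix $r\ge1$ and a finite $\mathcal P\subseteq\mathcal Q$ of size $n$, and set $H_p=A[m_p]$ with $m_p=\prod_{q\in\mathcal P,\,q\ne p}q^{r}$, so that $H_p=\bigoplus_{q\in\mathcal P\setminus\{p\}}\mathbb Z(q^\infty)^{(\omega)}[q^{r}]$ as written. A gcd computation gives $\bigcap_{p\in\mathcal P}H_p=A[1]=0$, while for each $p_0\in\mathcal P$ one has $\bigcap_{p\in\mathcal P\setminus\{p_0\}}H_p=A[p_0^{r}]=\mathbb Z(p_0^\infty)^{(\omega)}[p_0^{r}]$, which is infinite; thus removing any single subgroup raises the index to infinity, and the ``furthermore'' part of Proposition \ref{P:inpattern-in-1based} produces an inp-pattern of depth $n$. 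Taking $n=|\mathcal Q|$ (and arbitrarily large $n$ when $\mathcal Q$ is infinite) gives $\dprk(A)\ge|\mathcal Q|$, and together with $\dprk(A)\ge1$ we conclude $\dprk(A)=\max\{1,|\mathcal Q|\}$. The main obstacle is the exact criterion for $[A[k']:A[k]]$ to be infinite — isolating that only primes in $\mathcal Q$ together with a strict increase of valuation matter — and converting the proposition's ``remove one'' condition into the strict-minimality that forces the injection $\beta\mapsto p_\beta$; the edge cases $A[0]$ (depth $1$) and $\mathcal Q=\emptyset$ must be tracked by hand. Alternatively, Corollary \ref{C:BasicEquation} together with Lemma \ref{L:UnbddExp+Dpmin} reduces the computation to the summand over $\mathcal Q$, but this index criterion remains the crux.
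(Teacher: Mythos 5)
Your proposal is correct, and its core (the upper bound) takes a genuinely different route from the paper's. The paper proves $\dprk(A)\le\max\{1,|\mathrm{D}_{\ge\aleph_0}(A)|\}$ structurally: it splits $A$ as $B\oplus C$, where $B$ is the sum of the components $\mathbb Z(p^\infty)^{(\omega)}$ with $\gamma_p=\omega$ and $C$ is the sum of the finite-$\gamma_p$ components, observes that every p.p.-definable subgroup of $C$ is either finite or the whole group, so that Lemma \ref{L:UnbddExp+Dpmin} makes $C$ negligible, and then bounds $\dprk(B)$ by $|\mathrm{D}_{\ge\aleph_0}(A)|$ using dp-minimality of each $\mathbb Z(p^\infty)^{(\omega)}$ together with subadditivity (Corollary \ref{C:BasicEquation}). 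You instead argue directly: every inp-pattern is witnessed (via Proposition \ref{P:inpattern-in-1based} and Remark \ref{R:Inp-pattern}) by proper subgroups $A[m_\alpha]$, your gcd/valuation criterion translates the index condition into the existence, for each row $\beta$, of a prime $p_\beta\in\mathrm{D}_{\ge\aleph_0}(A)$ at which $v_{p_\beta}(m_\beta)<v_{p_\beta}(m_\alpha)$ for all $\alpha\ne\beta$, and the map $\beta\mapsto p_\beta$ is injective. This is more elementary and self-contained (no appeal to Lemma \ref{L:UnbddExp+Dpmin} or to subadditivity), and it yields strongness directly rather than through ``$\omega$-stable, hence strong''; what the paper's route buys is brevity and reuse of decomposition lemmas it needs anyway for the other classes of Szmielew summands. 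Your lower bound (the explicit family $A[\prod_{q\in\mathcal P\setminus\{p\}}q^r]$ with trivial total intersection and infinite one-removed intersections, fed into the ``furthermore'' clause of Proposition \ref{P:inpattern-in-1based}) is exactly the paper's argument, and your $\omega$-stability proof makes explicit the chain condition the paper asserts ``can be easily seen''.

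One small point to tighten in the $\omega$-stability step: you claim that along a descending chain the valuations of the $m_i$ are bounded by those of $m_1$ and supported on the primes dividing $m_1$. As you yourself note, inclusion $A[m']\subseteq A[m]$ only controls valuations at primes $p$ with $\gamma_p\ge1$; at primes with $\gamma_p=0$ the chosen representative $m_i$ can carry arbitrary factors (if $\gamma_3=0$ then $A[2]=A[2\cdot3^k]$ for every $k$), so the quantity $\sum_{p\mid m_1}v_p(m_i)$ need not decrease as written. The fix is one line: replace each $m_i$ by $\prod_{\gamma_p\ge1}p^{v_p(m_i)}$, which defines the same subgroup, after which your descent argument works. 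Note that your upper-bound argument is immune to this issue, since its criterion only ever inspects valuations at primes in $\mathrm{D}_{\ge\aleph_0}(A)$.
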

\begin{remark}
By Section \ref{ss:szmielew groups}, this is true for any divisible torsion abelian group.
\end{remark}
\begin{proof}
Let $A$ be the group
$$
\bigoplus_{p} \mathbb Z(p^\infty)^{(\gamma_p)}.
$$ 
Using the description of p.p. formulas, any p.p.-definable proper subgroup of $A$ must be given by a formula equivalent to $mx=0$ with $m>0$. Consequently, it can be easily seen that $A$ satisfies the minimal chain condition on p.p.-definable subgroups and so the theory of $A$ is $\omega$-stable. Furthermore, this also yields that $\dprk(A)=1$ whenever $\gamma_p<\omega$ for every prime $p$. 

Assume now that there are some primes $p$ with $\gamma_p=\omega$, {\it i.e.} $\mathrm{D}_{\ge\aleph_0}(A)$ is non-empty. We show that $\dprk(A)=|\mathrm{D}_{\ge\aleph_0}(A)|$. We emphasize here that $\dprk(A)$ might be $\aleph_0$ even though $A$ is strong. Fix an integer $r$ and consider a finite subset $\mathcal P$ of $\mathrm{D}_{\ge\aleph_0}(A)$. Note that, since $\mathbb{Z}(q^\infty)^{(\omega)}[q^r]$ is infinite, the family of p.p.-definable subgroups 
$$
\left\{\bigoplus_{q\in \mathcal P\setminus\{q'\}} \mathbb Z(q^\infty)^{(\omega)}[q^r] \right\}_{q'\in \mathcal P} 
,$$ defined by the formulas $\left(\prod_{q\in \mathcal{P} \setminus \{q'\}} q^r \right)x=0$,
witnesses an inp-pattern of depth $|\mathcal P|$ in $A$. Hence, we get that $\dprk(A) \ge |\mathrm{D}_{\ge\aleph_0}(A)|$. For the other inequality, assume as we may that $\mathrm{D}_{\ge \aleph_0}(A)$ is finite. Note that each p.p.-definable subgroup of 
$$
\bigoplus_{p ; \ \gamma_p<\omega} \mathbb Z(p^\infty)^{(\gamma_p)}
$$ 
is the whole group if it has unbounded exponent and is finite otherwise. Hence, we then have that $\dprk(A)\le |\mathrm{D}_{\ge\aleph_0}(A)|$ by Lemma \ref{L:UnbddExp+Dpmin} and Corollary \ref{C:BasicEquation}, since each $\mathbb Z(p^\infty)^{(\omega)}$ is $\dprk$-minimal. 
\end{proof}

\begin{remark}
It might be worthwhile to note that $\bigoplus_{p \text{ prime }} \mathbb{Z}{(p^\infty)}^{(\omega)}$ is an example of an $\omega$-stable, not of finite {\rm dp}-rank abelian group in the pure language of groups.
\end{remark}

\subsection{Finite exponent groups} Abelian groups of finite exponent are elementarily equivalent to a group of the form
$$
\bigoplus_p \bigoplus_{n>0} \mathbb Z(p^n)^{(\alpha_{p,n})}
$$
with $\alpha_{p,n}\neq 0$ only for finitely many pairs $(p,n)$. Their breadth has been characterized in \cite[Theorem 5.1]{ADHMS} in terms of the Ulm invariants. Recall that the {\em Ulm invariants} of an abelian group $A$ are defined for each prime $p$ and natural number $n$ as 
$$
\mathrm{U}(p,n;A) = \left| (p^n A)[p] / (p^{n+1}A)[p] \right|.
$$ 
For each prime $p$, we set
$$
\mathrm{U}(p;A) = \{ n\ge 0 : \mathrm{U}(p,n;A) >1\}
$$
and note that the set $\mathrm{U}(p;A)$ is finite if and only if $A$ has finite $p$-length, and that $A$ has finite exponent if and only if $A$ is torsion and the set $\mathrm{U}(p;A)$ is finite with $\mathrm{U}(p;A)= \emptyset$ for all but finitely many $p$. 

Recall that, given a finite nonempty set $I$, we
defined $d(I)$ as the size of a maximal subset $I_0$ of $I$ such that $j-i\ge 2$ for any two integers $i<j$ from $I_0$\footnote{In \cite{ADHMS}, the definition of $d(I)$ differs from ours, but both formulations are easily seen to be equivalent.}.

\begin{fact}\cite[Theorem 5.1]{ADHMS}\label{F:FiniteExpBreadth}
Suppose that $A$ has finite exponent, and set 
$$
d  = \sum_p  d(\mathrm{U}(p;A)).
$$
Then, the lattice of the p.p.-definable subgroups of $A$ has breadth $d$.
\end{fact}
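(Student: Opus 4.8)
The plan is to reduce everything to a single prime and then to one combinatorial problem about a lattice of ``profiles'', isolating a single structural constraint (a \emph{linking} condition) that drives the whole computation.

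\textbf{Reduction.} First I would note that, by Baur--Monk, the lattice of p.p.-definable subgroups together with its inclusion order, and hence its breadth, depends only on $\mathrm{Th}(A)$; so by Szmielew I may assume $A=\bigoplus_p A_p$ with $A_p=\bigoplus_{n>0}\mathbb Z(p^n)^{(\alpha_{p,n})}$ and only finitely many nonzero $\alpha_{p,n}$. Each primary component $A_p=\{x: p^{N}x=0\}$, with $N$ the exponent, is p.p.-definable, and each $\LPP(A_p)$ has a greatest and a smallest element, so Fact~\ref{F:sum-of-breadth} applies with equality and reduces the problem to a single prime. It therefore suffices to show that the breadth of the p.p.-lattice of $B=\bigoplus_{n}\mathbb Z(p^n)^{(\alpha_n)}$ equals $d(\mathrm U(p;A))$, where $\mathrm U(p;A)$ records (a shift of) the set of lengths $n$ with $\alpha_n\neq 0$.

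\textbf{Encoding by profiles.} By Fact~\ref{F:prufer} and Remark~\ref{R:p^n|p^mx} every p.p.-definable subgroup of $B$ is a finite intersection of subgroups $p^aB+B[p^b]$. Since in $\mathbb Z(p^n)$ divisibility by $p^a$ coincides with being killed by $p^{n-a}$, Fact~\ref{F:pp-commutes-with-sum} shows that every such subgroup splits along the components and meets $\mathbb Z(p^n)^{(\alpha_n)}$ in $\mathbb Z(p^n)^{(\alpha_n)}[p^{j_n}]$ for a \emph{profile} $(j_n)_n$ with $0\le j_n\le n$; a single generator yields $j_n=\min(n,\max(n-a,b))$, and intersection corresponds to coordinatewise minimum. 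Thus the p.p.-lattice embeds, as a meet-semilattice, into the product of chains $\prod_n\{0,\dots,n\}$, its image being the sub-meet-semilattice generated by the generator profiles, and the entire problem becomes the computation of the breadth of this semilattice of profiles.

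\textbf{Linking and the upper bound.} The key observation is that every generator profile satisfies $0\le j_{n+1}-j_n\le 1$, and a short case check shows this \emph{linking} condition is preserved under coordinatewise minimum, hence holds for every achievable profile. Recall now that breadth $\ge k$ means exactly that there is an irredundant family $x_1,\dots,x_k$, i.e.\ each $x_t$ is, in some coordinate $c_t$, the unique strict minimizer; these coordinates $c_t$ are then distinct and lie in the support $\{n:\alpha_n\neq 0\}$. Linking forbids two of them from being consecutive integers: if $c_t=m$ and $c_{t'}=m+1$, then $(x_{t'})_m\ge(x_t)_m+1$, so linking forces $(x_{t'})_{m+1}\ge (x_{t'})_m\ge (x_t)_m+1\ge (x_t)_{m+1}$, contradicting that $x_{t'}$ strictly beats $x_t$ at $m+1$. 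Hence the private coordinates form a subset of the support with consecutive gaps $\ge 2$, so their number is at most $d(\mathrm U(p;A))$, giving breadth $\le d(\mathrm U(p;A))$.

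\textbf{Lower bound and the main obstacle.} For the reverse inequality I would fix a maximal $2$-separated set of lengths $n_1<\dots<n_d$ and build an irredundant family of generators $p^{a_t}B+B[p^{b_t}]$ whose profiles cross pairwise, each $H_t$ being the unique minimizer at $n_t$; the point is that a gap $n_{t+1}-n_t\ge 2$ provides the two increments a non-decreasing generator profile needs in order to be low at $n_t$ yet high at $n_{t+1}$, exactly what a single increment (adjacent lengths) cannot achieve. I expect the main obstacle to be precisely this lower-bound construction: unlike the clean upper bound, one must choose the parameters $a_t,b_t$ so that all $d$ private coordinates are realized \emph{simultaneously}, and verifying global, not merely pairwise, irredundancy requires careful bookkeeping along the resulting staircase of profiles. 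Finally I would remark that running the same argument with ``infinite index'' in place of ``proper descent'' restricts the admissible private coordinates to the infinite-multiplicity layers, which is what later yields the commensurability (equivalently, $\dprk$) version with $\mathrm U_{\ge\aleph_0}$ in place of $\mathrm U$.
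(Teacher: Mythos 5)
First, note that the paper itself does not prove this statement: it is quoted verbatim from \cite{ADHMS} (their Theorem 5.1), so there is no internal proof to compare against; your profile-and-linking analysis is in any case close in spirit to the argument given there, and your upper bound is complete and correct. One slip in the reduction is worth fixing, though. Fact \ref{F:sum-of-breadth} is a statement about the semilattice $\LPP$ of \emph{commensurability classes}, while the present Fact concerns the honest lattice of p.p.-definable subgroups, and for this Fact the two are genuinely different objects: finite nonzero multiplicities $\alpha_{p,n}$ contribute to $\mathrm{U}(p;A)$ and to the breadth of the p.p. lattice, but are invisible in $\LPP$. So you cannot invoke Fact \ref{F:sum-of-breadth} here; what you need (true by the same argument) is that the p.p. lattice of $A=\bigoplus_p A_p$ is isomorphic, as a meet-semilattice, to the finite product of the p.p. lattices of the primary components $A_p$ --- using Fact \ref{F:pp-commutes-with-sum}, p.p.-definability of each $A_p=A[p^{N_p}]$, and closure of p.p.-definable subgroups under finite sums --- combined with the general product inequality for breadth displayed after the definition of breadth, which is an equality because each factor has a greatest and a smallest element ($A_p$ and $\{0\}$). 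The only other elided point in the upper bound is the standard remark that breadth $\geq k$ yields an irredundant family of size $k$ (pass to a minimal subfamily with the same meet).

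The genuine gap is the lower bound, which you explicitly leave unfinished: no family is constructed and irredundancy is never verified. It is easier to fill than you fear, because the ``global versus pairwise'' worry dissolves in your own profile language: if each $x_t$ is the unique strict minimizer at its private coordinate, then omitting $x_t$ strictly raises the coordinatewise minimum there, so global irredundancy is literally a conjunction of pairwise inequalities at the $d$ chosen coordinates. Concretely, let $n_1<\dots<n_d$ be a $2$-separated subset of $\mathrm{supp}(B)=\{n:\alpha_n\neq 0\}$ (recall $d(\mathrm{U}(p;A))=d(\mathrm{supp}(B))$, as $\mathrm{U}(p;A)$ is a shift of the support). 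Since achievable profiles are non-decreasing, the values $(x_t)_{n_t}$ of any irredundant family must be strictly increasing in $t$, so take plateau heights $b_t=t-1$ and set $H_t=p^{a_t}B+B[p^{t-1}]$ with $a_t=n_t-t+1$, which is p.p.-definable by $p^{a_t+t-1}\mid p^{t-1}x$ (Remark \ref{R:p^n|p^mx}); its profile is $n\mapsto\min\bigl(n,\max(n-a_t,\,t-1)\bigr)$. Then $(x_t)_{n_t}=t-1$, while for $s>t$ one has $(x_s)_{n_t}\geq\min(n_t,s-1)\geq t$, and for $s<t$, using $n_t-n_s\geq 2(t-s)$, one has $n_t-a_s=n_t-n_s+s-1\geq 2t-s-1\geq t$, whence again $(x_s)_{n_t}\geq t$. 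Moreover $n_t\geq 2t-1\geq t$, so the values $t-1<t$ name distinct subgroups of $\mathbb{Z}(p^{n_t})^{(\alpha_{n_t})}$ with $\alpha_{n_t}\neq 0$. Thus each $H_t$ is the unique strict minimizer at the support coordinate $n_t$, the family $\{H_t\}_{t\leq d}$ is irredundant, and breadth $\geq d$ follows; note that $2$-separation is used exactly once, to make the constraints $a_t\geq n_t-t+1$ and $n_{t+1}-a_t\geq t+1$ simultaneously satisfiable. With this construction supplied and the reduction rephrased as above, your proof is complete.
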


For each prime $p$ recall that
$$
\mathrm{U}_{\ge \aleph_0}(p;A) = \{ n\ge 0 : \mathrm{U}(p,n;A) \text{ is infinite} \}.
$$

\begin{proposition}\label{P:BoundedExponent}
If $A$ is an infinite abelian group of finite exponent then its $\dprk$-rank is finite and is equal to 
$$
\sum_p d \big( \mathrm{U}_{\ge \aleph_0}(p;A) \big).
$$
\end{proposition}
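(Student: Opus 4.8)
The plan is to reduce to a strict Szmielew group and then isolate the part of the group that carries the \emph{infinite} Ulm invariants. Since the $\dprk$-rank is an invariant of the theory, by Section \ref{ss:szmielew groups} we may assume that $A$ is the strict Szmielew group $\bigoplus_p\bigoplus_{n>0}\mathbb Z(p^n)^{(\alpha_{p,n})}$, where $\alpha_{p,n}\neq 0$ for only finitely many pairs $(p,n)$ because $A$ has finite exponent. I would then split $A=A_1\oplus A_0$, where $A_1=\bigoplus_{\alpha_{p,n}=\omega}\mathbb Z(p^n)^{(\omega)}$ collects the infinite invariants and $A_0=\bigoplus_{0<\alpha_{p,n}<\omega}\mathbb Z(p^n)^{(\alpha_{p,n})}$ collects the finite ones. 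As there are only finitely many nonzero invariants and $A$ is bounded, $A_0$ is a finite group, so $\dprk(A_0)=0$; in particular the sum in the statement is a finite sum of finite numbers, hence finite. By the two inequalities of Corollary \ref{C:BasicEquation} we get $\dprk(A_1)\le\dprk(A)\le\dprk(A_1)+\dprk(A_0)=\dprk(A_1)$, so it remains to compute $\dprk(A_1)$.

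Next I would compute $\dprk(A_1)$ through its semilattice of commensurability classes. By Corollary \ref{C:breadth=dp} we have $\dprk(A_1)={\rm breadth}(\LPP(A_1))$, whereas Fact \ref{F:FiniteExpBreadth} computes the breadth of the \emph{full} lattice of p.p.-definable subgroups of $A_1$ to be $\sum_p d(\mathrm{U}(p;A_1))$. Since every summand of $A_1$ occurs with multiplicity $\omega$, one has $\mathrm{U}(p;A_1)=\mathrm{U}_{\ge\aleph_0}(p;A)$, which already produces the desired number. The point is therefore to show that for $A_1$ the commensurability semilattice and the full lattice of p.p.-definable subgroups have the same breadth. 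For this I would establish that in $A_1$ every proper p.p.-definable subgroup has infinite index, equivalently that for any two p.p.-definable subgroups $H,N$ one has $[H:H\cap N]<\infty$ if and only if $H\subseteq N$. This makes each commensurability class a singleton, so the natural map from $\LPP(A_1)$ to the full lattice is an isomorphism of (intersection-)semilattices and hence preserves the breadth.

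The heart of the matter is this last structural claim. Using Fact \ref{F:pp-commutes-with-sum}, any p.p.-definable $H\le A_1$ decomposes blockwise as $H=\bigoplus_{(p,n)}\bigl(H\cap \mathbb Z(p^n)^{(\omega)}\bigr)$, and by Fact \ref{F:prufer} the p.p.-definable subgroups of a single block $\mathbb Z(p^n)^{(\omega)}$ form the chain $p^{j}\mathbb Z(p^n)^{(\omega)}$ with $0\le j\le n$. Comparing $H$ and $N$ block by block, the index $[H:H\cap N]$ is the product of the block indices $\bigl[p^{j}\mathbb Z(p^n)^{(\omega)}:p^{\max(j,j')}\mathbb Z(p^n)^{(\omega)}\bigr]$, each of which equals $1$ when $j'\le j$ and is infinite when $j'>j$ (the quotient being an infinite direct sum of copies of $\mathbb Z(p^{j'-j})$). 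Hence the total index is finite exactly when $H\subseteq N$, as claimed. I expect this blockwise index computation, together with keeping the bookkeeping between the multiplicities $\alpha_{p,n}$ and the Ulm indices straight, to be the only genuinely technical point; the rest is a direct application of the cited results.
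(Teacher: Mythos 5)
Your argument is correct, and its skeleton coincides with the paper's own proof: the same reduction to a strict Szmielew group, the same decomposition (your $A_1\oplus A_0$ is exactly the paper's $B\oplus C$ with $C$ finite), and the same two ingredients, Fact \ref{F:FiniteExpBreadth} plus Corollary \ref{C:breadth=dp}. The only genuine divergence is how the breadth of the full lattice of p.p.-definable subgroups of $A_1$ gets identified with the breadth of the commensurability semilattice $\LPP(A_1)$. The paper handles this step by remarking that $B^{(\omega)}\equiv B$ and citing \cite[Section 4.6]{ADHMS}; you instead prove it directly, showing that every commensurability class in $A_1$ is a singleton: by Facts \ref{F:pp-commutes-with-sum} and \ref{F:prufer} (with Remark \ref{R:p^n|p^mx}), every p.p.-definable subgroup of $A_1$ decomposes over the finitely many blocks, on each block $\mathbb{Z}(p^n)^{(\omega)}$ the p.p.-definable subgroups form the chain $p^{j}\mathbb{Z}(p^n)^{(\omega)}$ with $0\le j\le n$, and each block index is $1$ or infinite, so $[H:H\cap N]<\infty$ forces $H\subseteq N$. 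Your computation is right, and it is in fact the concrete content behind the cited ADHMS observation --- in a group elementarily equivalent to its own $\omega$-th power, a finite index $k=[H:H\cap N]$ between p.p.-definable subgroups would become $k^{\omega}$ in the power, hence $k=1$ --- so the two proofs rest on the same underlying phenomenon. What your route buys is self-containedness (no appeal to the ADHMS semilattice discussion beyond Fact \ref{F:FiniteExpBreadth} itself), at the cost of the elementary blockwise bookkeeping, which you have carried out correctly.
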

\begin{proof}
Assume that $A$ has finite exponent, and since there is not harm in assuming that 
\[A=\bigoplus_p \bigoplus_{n>0} \mathbb Z(p^n)^{(\alpha_{p,n})},\] we can write $A$ as $B\oplus C$, where $C$ is a finite group and $B$ is such that $\mathrm{U}(p;B) = \mathrm{U}_{\ge \aleph_0}(p;A)$ for every prime $p$. 

Since $B^{(\omega)}\equiv B$, by \cite[Section 4.6]{ADHMS} the breadth of the lattice of p.p.-definable subgroups of $B$ is equal to that of the lattice of commensurability classes. It then follows by the previous fact that the lattice of commensurability classes of p.p.-definable subgroups of $B$ has breadth $\sum_p d(\mathrm{U}_{\ge \aleph_0}(p;A))$ and so we obtain the result by Corollary \ref{C:breadth=dp}, since $C$ is finite.
\end{proof}

\subsection{Torsion of unbounded length} A torsion abelian group whose primary $p$-components $A_p$ have unbounded $p$-length is elementary equivalent to a group of the form 
$$
\bigoplus_p \bigoplus_{n>0} \mathbb Z(p^n)^{(\alpha_{p,n})}
$$
with $\alpha_{p,n}=0$ only for finitely many $n$ for each prime $p$. Of course, the number of coefficients $\alpha_{p,n}$ that are non-zero or infinite determines the structure of the semilattice of commensurable classes of the p.p.-definable subgroups. We first analyse when these groups are strong.

We start with this easy observation, which follows easily by Remark \ref{R:p^n|p^mx}.
\begin{fact}\label{F:pp-formula}
Let $\mathcal X$ be a subset of $\omega$ and let $A$ be the group 
\[
\bigoplus_{n\in\mathcal X} \mathbb{Z}(p^n)^{(\alpha_{p,n})}.
\] 
Then, the formula $p^r|p^sx$ with $0\leq s<r$ defines the subgroup
\[\bigoplus_{n\le s} \mathbb{Z}(p^n)^{(\alpha_{p,n})} \oplus  \bigoplus_{s<n\le r} \mathbb Z(p^n)^{(\alpha_{p,n})}[p^s] \oplus \bigoplus_{r<n} p^{r-s}\mathbb{Z}(p^n)^{(\alpha_{p,n})} .\]
\end{fact}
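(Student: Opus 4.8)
The plan is to reduce the computation to a single cyclic summand and then carry out a short case analysis. Since the formula $p^r\mid p^sx$ is positive primitive, it commutes with arbitrary direct sums: an element of a direct sum has only finitely many nonzero coordinates and the existential witness may be chosen coordinatewise. Thus, writing $\varphi(x)$ for $p^r\mid p^sx$ and applying Fact \ref{F:pp-commutes-with-sum} to $A=\bigoplus_{n\in\mathcal X}\mathbb Z(p^n)^{(\alpha_{p,n})}$, the subgroup $\varphi(A)$ decomposes as $\bigoplus_{n\in\mathcal X}\varphi\big(\mathbb Z(p^n)\big)^{(\alpha_{p,n})}$. Hence it suffices to identify $\varphi(\mathbb Z(p^n))$ for each fixed $n$ and then reassemble.

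For the cyclic case I would invoke Remark \ref{R:p^n|p^mx}, which states that on any abelian group $G$ the formula $p^r\mid p^sx$ defines $p^{r-s}G+G[p^s]$; I apply it with $G=\mathbb Z(p^n)$. The computation then rests only on the elementary fact that for each $0\le k\le n$ the group $\mathbb Z(p^n)$ has a unique subgroup of order $p^k$, namely $p^{n-k}\mathbb Z(p^n)=\mathbb Z(p^n)[p^k]$, and that these subgroups are linearly ordered by inclusion, with $p^a\mathbb Z(p^n)\subseteq p^b\mathbb Z(p^n)$ precisely when $a\ge b$.

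With this in hand the three cases are forced by comparing the exponents $r-s$ and $n-s$. If $n\le s$, then $\mathbb Z(p^n)$ has exponent dividing $p^s$, so $\mathbb Z(p^n)[p^s]=\mathbb Z(p^n)$ and $\varphi$ defines the whole summand. If $s<n\le r$, then $r-s\ge n-s$, so $p^{r-s}\mathbb Z(p^n)\subseteq p^{n-s}\mathbb Z(p^n)=\mathbb Z(p^n)[p^s]$ and the sum collapses to $\mathbb Z(p^n)[p^s]$. Finally, if $r<n$, then $n-s\ge r-s$ gives $\mathbb Z(p^n)[p^s]=p^{n-s}\mathbb Z(p^n)\subseteq p^{r-s}\mathbb Z(p^n)$, so the sum collapses to $p^{r-s}\mathbb Z(p^n)$. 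Reassembling over $n\in\mathcal X$ yields exactly the claimed decomposition. I do not expect any serious obstacle here: the statement is a direct computation, and its only subtlety is tracking which of the two generating subgroups $p^{r-s}\mathbb Z(p^n)$ and $\mathbb Z(p^n)[p^s]$ absorbs the other, together with a careful treatment of the boundary values $n=s$ (handled inside $n\le s$) and $n=r$ (handled inside $s<n\le r$, where in fact the two descriptions coincide).
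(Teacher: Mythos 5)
Your proposal is correct and follows essentially the same route as the paper, which simply notes that the fact ``follows easily by Remark~\ref{R:p^n|p^mx}''; your write-up fills in precisely that computation, applying the remark summand-wise and comparing $p^{r-s}\mathbb{Z}(p^n)$ with $\mathbb{Z}(p^n)[p^s]$ in the three ranges of $n$. The case analysis and boundary cases are handled correctly, so there is nothing to add.
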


\begin{lemma}
An abelian group of the form
$$
\bigoplus_{n\in\mathcal X} \mathbb  Z(p^n)^{(\omega)}
$$ 
is strong if and only if the set $\mathcal X$ is finite if and only if it has finite dp-rank.
\end{lemma}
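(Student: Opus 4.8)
The statement packages a three-way equivalence, and I would prove the cycle: finite $\dprk$-rank $\Rightarrow$ strong $\Rightarrow$ $\mathcal X$ finite $\Rightarrow$ finite $\dprk$-rank. Two of these implications are immediate from the set-up of Section~\ref{s:dprk,vcden}. Finite $\dprk$-rank forbids an infinite inp-pattern, hence entails strongness by the very definitions. And if $\mathcal X$ is finite, then $A=\bigoplus_{n\in\mathcal X}\mathbb Z(p^n)^{(\omega)}$ has finite exponent $p^{\max\mathcal X}$, so Proposition~\ref{P:BoundedExponent} computes $\dprk(A)=d\big(\mathrm U_{\ge\aleph_0}(p;A)\big)$; here $\mathrm U_{\ge\aleph_0}(p;A)=\{m:\, m+1\in\mathcal X\}$ is finite, so $d$ of it is finite. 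Everything therefore reduces to the one substantial implication: if $\mathcal X$ is infinite then $A$ is not strong.

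For that I would exhibit an inp-pattern of depth $\omega$ via Proposition~\ref{P:inpattern-in-1based} together with Remark~\ref{R:Inp-pattern}: it suffices to produce p.p.-definable subgroups $(H_k)_{k\ge 1}$ of $A$ such that for every $j$ the index $[\bigcap_{k\neq j}H_k:\bigcap_k H_k]$ is infinite. By Fact~\ref{F:pp-commutes-with-sum} any such subgroup meets each summand $\mathbb Z(p^n)^{(\omega)}$ in a group of the form $p^{c(n)}\mathbb Z(p^n)^{(\omega)}$, so $H$ is encoded by a profile $n\mapsto c(n)$; and Fact~\ref{F:pp-formula} shows that every profile arising from a p.p.\ formula is \emph{non-decreasing} (a divisibility $p^r\mid p^s x$ yields a ramp rising with slope one on $(s,r]$ and then plateauing). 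This is the crux and the main obstacle: as profiles are non-decreasing their supports are up-tails, so one cannot isolate a single summand, and the naive nested families (such as $A[p^{a}]$ or $p^{a}A$) collapse under intersection. The saving feature is the multiplicity $\omega$: lowering a profile by just $1$ at a single component $\mathbb Z(p^n)^{(\omega)}$ already multiplies the index by $p^{\omega}=\infty$. Hence it is enough that each $H_j$ be the \emph{strict} maximum of the profiles at some component lying in $\mathcal X$.

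To arrange this I would use that $\mathcal X$ is unbounded to choose $0=n_0<n_1<n_2<\cdots$ with $n_k\in\mathcal X$ for $k\ge 1$ and successive gaps $g_k:=n_k-n_{k-1}$ strictly increasing, and then set $H_k$ to be the subgroup defined by $p^{n_k}\mid p^{n_{k-1}}x$. By Fact~\ref{F:pp-formula} its profile $c_k$ vanishes below $n_{k-1}$, rises to the value $g_k$ at $n_k$, and is constant equal to $g_k$ thereafter; in particular $c_k(n_j)=g_k$ for $k\le j$ and $c_k(n_j)=0$ for $k>j$. Thus at the component $n_j$ the overall profile $\max_k c_k$ equals $g_j$, attained only by $H_j$, whereas $\max_{k\neq j}c_k(n_j)=g_{j-1}<g_j$. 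Removing $H_j$ therefore lowers the profile at $n_j\in\mathcal X$ by $g_j-g_{j-1}\ge 1$, so that $[\bigcap_{k\neq j}H_k:\bigcap_k H_k]=\infty$ for every $j$. By Proposition~\ref{P:inpattern-in-1based} the family $(H_k)_{k\ge 1}$ then witnesses an inp-pattern of depth $\omega$, so $A$ is not strong, closing the cycle.
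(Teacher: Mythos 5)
Your proof is correct and follows essentially the same route as the paper: the substantial implication there is likewise that an infinite $\mathcal X$ yields an inp-pattern of depth $\omega$, built from formulas $p^{r_k}\mid p^{s_k}x$ along a sparse subsequence of $\mathcal X$ (the paper takes $s_k=n_k$, $r_k=2n_k$ with $2n_k<n_{k+1}$), reading off the defined subgroups componentwise via Fact~\ref{F:pp-formula}, using the multiplicity $\omega$ to turn a proper containment at a single summand into infinite index, and concluding via Proposition~\ref{P:inpattern-in-1based}. Your parameter choice (strictly increasing gaps, with the drop detected at $n_j$ rather than at $n_{j+1}$) and your appeal to Proposition~\ref{P:BoundedExponent} instead of dp-minimality of each summand together with Corollary~\ref{C:BasicEquation} for the finite-$\mathcal X$ direction are only cosmetic variations.
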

\begin{proof}
Let $A$ be an abelian group of this form. Since the lattice of subgroups of $\mathbb Z(p^n)$ is linearly ordered, so is the lattice of p.p.-definable subgroups of $\mathbb Z(p^n)^{(\omega)}$. Consequently, we see that $\mathbb Z(p^n)^{(\omega)}$ is dp-minimal and so we have by Corollary \ref{C:BasicEquation} that $A$ has finite dp-rank whenever $\mathcal X$ is finite. As an abelian group of finite dp-rank is strong, it remains to show that the set $\mathcal X$ is finite whenever $A$ is strong. To do so, we assume that $\mathcal X$ is infinite and we construct an inp-pattern of depth $\omega$.

Suppose that $\mathcal X$ is infinite and let $(n_i)_{i<\omega}$ be a strictly increasing sequence of elements of $\mathcal X$ with the property that $2n_i<n_{i+1}$. Now, consider for each $i<\omega$ the formula $p^{2n_i}|p^{n_i}x$, which we denote by $\varphi_i(x)$. We claim that the set $\{\varphi_i(x)\}_{i<\omega}$ is an inp-pattern of depth $\omega$ for $A$. To see this, set $C_n$ to denote the group $\mathbb Z(p^n)^{(\omega)}$ and note that it suffices to show that for any $k$ we have that
\[
\bigcap_{i\neq k} \varphi_i(C_{n_{k+1}}) \neq \bigcap_{i} \varphi_i(C_{n_{k+1}}).
\] 
We see using Fact \ref{F:pp-formula} that $\varphi_i(C_{n_{k+1}}) = p^{n_i}C_{n_{k+1}}$ for $i< k+1$, since $2n_i<n_{i+1}$, and also that $\varphi_i(C_{n_{k+1}})=C_{n_{k+1}}$ for $i\ge k+1$. It then follows for each $k\ge 1$ that
\[
\bigcap_{i\neq k} \varphi_i(C_{n_{k+1}}) =  p^{n_{k-1}}C_{n_{k+1}} \gneq p^{n_k}C_{n_{k+1}} = \bigcap_{i} \varphi_i(C_{n_{k+1}}),
\] 
and for $k=0$ that
\[
\bigcap_{i>0} \varphi_i(C_{n_{1}}) =  C_{n_{1}} \gneq p^{n_0}C_{n_{1}} = \bigcap_{i} \varphi_i(C_{n_{1}}).
\] 
This yields that $A$ is not strong.
\end{proof}

\begin{lemma}\label{L:UnbddExp-Strong}
An abelian group of the form
$$
\bigoplus_{p}\bigoplus_{n>0} \mathbb  Z(p^n)^{(\alpha_{p,n})}
$$ 
is strong if and only if it has finite dp-rank if and only if there is only a finite number of pairs $(p,n)$ such that $\alpha_{p,n}=\omega$ and finitely many primes $p$ such that $\alpha_{p,n}\neq 0$ for infinitely many $n$.
\end{lemma}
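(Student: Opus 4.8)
The plan is to prove the two nontrivial implications among the three stated conditions, since finite dp-rank trivially forbids an infinite inp-pattern and hence implies strongness. Write $A=\bigoplus_p A_p$ with $A_p=\bigoplus_{n>0}\mathbb Z(p^n)^{(\alpha_{p,n})}$ the primary components. First I would record the elementary computation that, for a prime $p$, multiplication by $p$ is an automorphism of $A_q$ for every $q\neq p$, so that $A/pA\cong A_p/pA_p\cong\bigoplus_n\mathbb Z(p)^{(\alpha_{p,n})}$; thus $A/pA$ is infinite precisely when $\alpha_{p,n}=\omega$ for some $n$ or $\alpha_{p,n}\neq 0$ for infinitely many $n$.

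For the implication from strongness to the finiteness condition I would argue by contraposition. Let $\mathcal Q$ be the set of primes $p$ with $A/pA$ infinite. If $\mathcal Q$ is infinite, then Fact \ref{F:Strong} immediately yields that $A$ is not strong. If $\mathcal Q$ is finite, then only finitely many primes have unbounded $p$-length and only finitely many primes carry a pair $(p,n)$ with $\alpha_{p,n}=\omega$ (each such prime lies in $\mathcal Q$); since the finiteness condition fails, there must then be infinitely many pairs $(p,n)$ with $\alpha_{p,n}=\omega$, and these are concentrated on the finite set $\mathcal Q$, so some single prime $p$ satisfies $\alpha_{p,n}=\omega$ for infinitely many $n$. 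Then $B=\bigoplus_{n:\alpha_{p,n}=\omega}\mathbb Z(p^n)^{(\omega)}$ is a direct summand of $A$ which, by the preceding lemma, admits an inp-pattern of depth $\omega$ witnessed by p.p.-definable subgroups; using Fact \ref{F:pp-commutes-with-sum} exactly as in the proof of Corollary \ref{C:BasicEquation}, such a pattern lifts to $A$, so $A$ is not strong.

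For the implication from the finiteness condition to finite dp-rank I would exhibit a decomposition of $A$ into finitely many summands of finite dp-rank and invoke the subadditivity in Corollary \ref{C:BasicEquation}. Let $P_1$ be the (finite) set of primes of unbounded $p$-length and let $P^\ast$ be the union of $P_1$ with the (finitely many) primes occurring in a pair $(p,n)$ with $\alpha_{p,n}=\omega$. For $p\notin P^\ast$ the component $A_p$ has finite exponent and all multiplicities finite, hence is a finite group, so $\bigoplus_{p\notin P^\ast}A_p$ is a direct sum of finite $p$-groups over distinct primes. For each of the finitely many $p\in P^\ast$, if $A_p$ has finite exponent it has finite dp-rank by Proposition \ref{P:BoundedExponent}; otherwise $p\in P_1$, and I would split $A_p=B_p\oplus C_p$, where $B_p=\bigoplus_{n:\alpha_{p,n}=\omega}\mathbb Z(p^n)^{(\omega)}$ is a finite sum of dp-minimal groups and $C_p=\bigoplus_{n:\alpha_{p,n}<\omega}\mathbb Z(p^n)^{(\alpha_{p,n})}$ has all multiplicities finite. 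It then remains to verify two dp-minimality statements.

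The main obstacle is precisely these two dp-minimality claims, both of which I would establish by showing that the semilattice of commensurability classes of p.p.-definable subgroups is a chain (equivalently, has breadth $1$) and then applying Corollary \ref{C:breadth=dp}. For the direct sum of finite $p$-groups over distinct primes, one reads off from Fact \ref{F:prufer} that a p.p.-definable subgroup is finite as soon as it contains a conjunct $mx=0$ with $m\ge 1$ and has finite index otherwise, leaving only two commensurability classes. For $C_p$, using Fact \ref{F:pp-commutes-with-sum} together with Fact \ref{F:pp-formula}, every p.p.-definable subgroup has the shape $\bigoplus_n (p^{c(n)}\mathbb Z(p^n))^{(\alpha_{p,n})}$, where the exponent function $c(n)$ is, for all large $n$, either eventually constant (when the subgroup has unbounded exponent) or of the form $n-a$ (when it has exponent $p^a$). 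Any two such functions are eventually comparable pointwise, so one of the two subgroups is eventually contained in the other; since the finitely many remaining blocks $\mathbb Z(p^n)^{(\alpha_{p,n})}$ are finite (as each $\alpha_{p,n}$ is finite), this forces the two subgroups to be comparable up to finite index. Hence the commensurability classes form a chain, $C_p$ is dp-minimal, and the subadditivity of dp-rank over the finitely many summands completes the proof.
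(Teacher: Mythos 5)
Your proof is correct, and its overall skeleton matches the paper's: the implication from strongness to the finiteness condition rests on Fact \ref{F:Strong} plus the preceding lemma, the implication from the finiteness condition to finite dp-rank proceeds by splitting off the finitely many problematic primes (your $P^\ast$ is exactly the set $\mathcal{P}$ used in the paper) and applying the subadditivity of Corollary \ref{C:BasicEquation}, and the remaining implication is trivial. The genuine difference is how the ``tame'' summands are handled. The paper observes that both $\bigoplus_{p\notin P^\ast}A_p$ and the finite-multiplicity parts $C_p$ are non-singular abelian groups in the sense of \cite{ADHMS} and cites \cite[Proposition 5.27]{ADHMS} for their dp-minimality; you instead prove dp-minimality from scratch, using Facts \ref{F:prufer}, \ref{F:pp-commutes-with-sum} and \ref{F:pp-formula} to show that the commensurability classes of p.p.-definable subgroups form a chain (finite versus finite-index subgroups in the first case, eventual pointwise comparability of the exponent functions $c(n)$ in the second) and then invoking Corollary \ref{C:breadth=dp}. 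Both chain computations are sound, and this buys a self-contained argument at the cost of length, where the paper buys brevity with a citation. A second, smaller difference is worth noting in your favour: for strong $\Rightarrow$ finiteness you argue by contraposition and explicitly lift an infinite inp-pattern from the summand $B$ to $A$ via the index computation from the proof of Corollary \ref{C:BasicEquation} together with Proposition \ref{P:inpattern-in-1based}. The paper reaches the same conclusion by implicitly using that a direct summand of a strong group is strong; since Corollary \ref{C:BasicEquation} as stated only gives ``both summands strong $\Rightarrow$ sum strong'', your explicit lifting is actually the more careful justification of this step.
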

\begin{proof}
Let $A$ be such a group and let $\Pp$ be the set of primes such that $\alpha_{p,n}\neq 0$ for infinitely many $n$ or $\alpha_{p,n}=\omega$ for some $n$. Note that for each prime $p$ in $\Pp$, the subgroup $pA$ has infinite index and the set $\Pp$ must be finite by Fact \ref{F:Strong} when $A$ is strong.  Furthermore, the previous lemma yields that for each prime $p$ in $\mathcal P$ there is only finitely many natural numbers $n$ such that $\alpha_{p,n}=\omega$. 

Assume now that there is only a finite number of pairs $(p,n)$ such that $\alpha_{p,n}=\omega$ and finitely many primes $p$ such that $\alpha_{p,n}\neq 0$ for infinitely many $n$. Thus, we then have that $\Pp$ is finite. 
Write $A$ as 
\[
\left( 
\bigoplus_{p\in \Pp}\bigoplus_{n>0} \mathbb  Z(p^n)^{(\alpha_{p,n})}
\right)\oplus \left( 
\bigoplus_{p\notin \Pp}\bigoplus_{n>0} \mathbb  Z(p^n)^{(\alpha_{p,n})}
\right).
\]
Note that the right summand is a non-singular abelian group in the sense of \cite{ADHMS}, and so it is dp-minimal by \cite[Proposition 5.27]{ADHMS}. In particular, it is strong. Together with the fact that $\Pp$ is finite, by Corollary \ref{C:BasicEquation} it is enough to show that for each prime $p\in\mathcal P$ 
the primary $p$-component $A_p$ of $A$ has finite dp-rank. Hence, without loss of generality, assume $A=A_p$ and $p\in\mathcal P$. Furthermore, since the group    
\[
\bigoplus_{\substack{n>0 \\ \alpha_{p,n}<\omega}} \mathbb  Z(p^n)^{(\alpha_{p,n})}
\]
is dp-minimal by 
\cite[Proposition 5.27]{ADHMS}, we may further assume that 
\[
A = \bigoplus_{n\in\mathcal X} \mathbb Z(p^n)^{(\omega)},
\]
where $\mathcal X$ is a subset of $\omega$. Note that $\mathcal X$ is finite by assumption and so $A$ has finite dp-rank by the previous lemma, as desired.
\end{proof}

\begin{lemma}\label{L:unbounded-fin-exponents}
Let $A$ be an abelian group of the form
$$
\bigoplus_{p\in\Pp}\bigoplus_{n>0} \mathbb  Z(p^n)^{(\alpha_{p,n})}
$$ 
with $\alpha_{p,n}\neq 0$ for infinitely many $n$ for every prime $p$ and $\alpha_{p,n}$ finite for every pair $(p,n)$, and assume that $\Pp$ is a finite set. Then, the following holds:
\begin{enumerate}
\item Given positive integers $r,s$, the subgroup $A[r]$ is contained up to finite index in $sA$, {\it i.e.} the subgroup $A[r]\cap sA$ has finite index in $A[r]$.
\item The $\dprk$-rank of $A$ equals $|\Pp|$. Moreover, for each positive integer $r$ the families $\{p^rA\}_{p\in\Pp}$ and $\{\bigoplus_{q\neq p} A[q^r]\}_{p\in\Pp}$ yield inp-patterns of depth $|\Pp|$.
\end{enumerate}
\end{lemma}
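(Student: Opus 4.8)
The plan is to treat the two items separately, reducing everything to computations inside the primary components $A_p=\bigoplus_{n>0}\mathbb{Z}(p^n)^{(\alpha_{p,n})}$. This is legitimate because $A$ is torsion and $\Pp$ is finite, so $A=\bigoplus_{p\in\Pp}A_p$, and both the torsion subgroups $A[\cdot]$ and the multiples $sA$ respect this decomposition.

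For item (1), I would first note that writing $r=\prod_p p^{e_p}$ and $s=\prod_p p^{f_p}$ gives $A[r]\cap sA=\bigoplus_{p\in\Pp}\big(A_p[p^{e_p}]\cap p^{f_p}A_p\big)$, since the factors of $s$ prime to $p$ act invertibly on $A_p$. As $\Pp$ is finite it suffices to bound each factor. By Fact \ref{F:pp-commutes-with-sum} the intersection distributes over the direct sum $\bigoplus_n\mathbb{Z}(p^n)^{(\alpha_{p,n})}$, so I reduce to a single copy of $\mathbb{Z}(p^n)$. The key elementary observation is that $\mathbb{Z}(p^n)[p^{e_p}]=p^{\max(n-e_p,0)}\mathbb{Z}(p^n)$, which is contained in $p^{f_p}\mathbb{Z}(p^n)$ as soon as $n\ge e_p+f_p$; hence all but finitely many (the tall) copies contribute index $1$, while the finitely many copies with $n<e_p+f_p$ are finite and occur with finite multiplicity $\alpha_{p,n}$. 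Therefore each $A_p[p^{e_p}]\cap p^{f_p}A_p$ has finite index in $A_p[p^{e_p}]$, and summing over $\Pp$ yields the claim.

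For item (2), the lower bound comes from exhibiting the two inp-patterns. Both families consist of p.p.-definable subgroups: $p^rA$ is defined by $p^r\mid x$, and $\bigoplus_{q\neq p}A[q^r]=A[m_p]$ is defined by $m_px=0$, where $m_p=\prod_{q\in\Pp\setminus\{p\}}q^r$. Using the decomposition $p^rA=p^rA_p\oplus\bigoplus_{q\neq p}A_q$ together with $A[a]\cap A[b]=A[\gcd(a,b)]$, a direct computation shows that for each $p_0\in\Pp$ deleting the $p_0$-th group from the total intersection changes it exactly in the $p_0$-primary coordinate:
\[
\Big[\bigcap_{p\neq p_0}p^rA:\bigcap_{p}p^rA\Big]=\big[A_{p_0}:p_0^rA_{p_0}\big],\qquad \Big[\bigcap_{p\neq p_0}A[m_p]:\bigcap_{p}A[m_p]\Big]=\big|A_{p_0}[p_0^r]\big|.
\]
Both indices are infinite because $\alpha_{p_0,n}\neq0$ for infinitely many $n\ge r$, each such $\mathbb{Z}(p_0^n)$ contributing a nontrivial factor. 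By Proposition \ref{P:inpattern-in-1based} together with Remark \ref{R:Inp-pattern}, each family therefore witnesses an inp-pattern of depth $|\Pp|$, whence $\dprk(A)\ge|\Pp|$.

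For the matching upper bound I would appeal to subadditivity. Since every $\alpha_{p,n}$ is finite, each primary component $A_p$ is dp-minimal by \cite[Proposition 5.27]{ADHMS}, so Corollary \ref{C:BasicEquation} gives $\dprk(A)=\dprk\big(\bigoplus_{p\in\Pp}A_p\big)\le\sum_{p\in\Pp}\dprk(A_p)=|\Pp|$; combined with the lower bound this forces $\dprk(A)=|\Pp|$. The main obstacle throughout is purely bookkeeping: one must carefully track, in each index computation, which primary coordinate is affected when a single subgroup is removed from the intersection, and verify the containment $\mathbb{Z}(p^n)[p^e]\subseteq p^f\mathbb{Z}(p^n)$ for $n\ge e+f$ that renders the cofinitely many tall cyclic summands irrelevant.
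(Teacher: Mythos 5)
Your proposal is correct and follows essentially the same route as the paper's proof: for (1) both reduce to the containment $\mathbb{Z}(p^n)[p^e]\subseteq p^f\mathbb{Z}(p^n)$ for $n\ge e+f$ (using finiteness of $\Pp$ and of the $\alpha_{p,n}$ to dispose of the remaining summands), and for (2) both obtain the upper bound from dp-minimality of the primary components \cite[Proposition 5.27]{ADHMS} plus Corollary \ref{C:BasicEquation}, and the lower bound from the same two families via the index computations $\left[A_{p_0}:p_0^rA_{p_0}\right]$ and $\left|A_{p_0}[p_0^r]\right|$ fed into Proposition \ref{P:inpattern-in-1based}. The only difference is presentational: you spell out the index computation for the family $\{p^rA\}_{p\in\Pp}$, which the paper treats as immediate.
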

\begin{proof}
Since $\Pp$ is finite, to show $(1)$ it suffices to show that $\mathbb Z(p^n)[r]$ is contained in $s\mathbb Z(p^n)$ for large enough $n$. Write $r = p^m k$ with $p$ and $k$ coprime, and note then that
$$
\mathbb Z(p^n)[r] = \mathbb Z(p^n)[p^m] = p^{n-m}\mathbb Z(p^n)
$$ 
is contained in $p^t \mathbb Z(p^n) = s\mathbb Z(p^n)$, for $s=p^tk'$ with $p$ and $k'$ coprime, for $n\ge m+t$, as desired.

For $(2)$, since $A$ is a direct sum of $|\Pp|$ many $\dprk$-minimal groups by \cite[Proposition 5.27]{ADHMS}, we have that $\dprk(A)\le |\Pp|$ by Corollary \ref{C:BasicEquation}. To obtain the equality, note that for a given $r>0$, every subgroup $p^r A$ has infinite index in $A$ and so we immediately get that the groups $p^rA$ for $p\in \Pp$ form an inp-pattern. 

On the other hand, to get an inp-pattern using finite exponent subgroups, note that for each prime $p_0$ in $\Pp$ we have that
$$
\left[ \bigcap_{p\in\Pp\setminus \{p_0\}} \bigoplus_{q\neq p} A[q^r] : \bigcap_{p\in\Pp} \bigoplus_{q\neq p} A[q^r]\right] =
\left| A[p_0^r] \right|,
$$ 
which is infinite. Thus, the family of groups $\bigoplus_{q\neq p} A[q^r]$ for $p\in\Pp$ also forms an inp-pattern of depth $|\Pp|$. 
\end{proof}

Recall that the set of primes $p$ such that an abelian group $A$ has unbounded $p$-length is denoted by $\mathrm{U}_{\ge\aleph_0}(A)$. Moreover, when $A$ is a strict Szmielew group, then this set is precisely the collection of primes $p$ with $\alpha_{p,n}\neq 0$ for infinitely many $n$. Finally, recall that we denoted by $\mathrm{U}_{\ge \aleph_0}(p;A)$ the set of $n$ such that $\alpha_{p,n}$ is infinite.

\begin{proposition}\label{P:Torsion+UnbddLength}
If $A$ is a strong abelian strict Szmielew group of unbounded exponent of the form
$$
\bigoplus_p \bigoplus_{n>0} \mathbb Z(p^n)^{(\alpha_{p,n})},
$$
then it has finite dp-rank and we have
$$
\dprk(A) = \max \left\{ 1 ,|\mathrm{U}_{\ge\aleph_0}(A)| \right\} + \sum_{p} \big(d\left(\mathrm{U}_{\ge\aleph_0} (p;A)\right)\big) .
$$
Moreover, if $\mathrm{U}_{\ge\aleph_0}(A)$ is non-empty, then an inp-pattern of maximal depth can be formed by p.p.-definable subgroups all of finite exponent or all of unbounded exponent.
\end{proposition}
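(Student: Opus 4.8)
The plan is to get the upper bound from the subadditivity already available and the lower bound (together with the ``moreover'') by assembling the explicit inp-patterns furnished by the previous lemmas; the only genuinely new work is a single-prime computation. Since $A$ is strong, Lemma~\ref{L:UnbddExp-Strong} tells us that $A$ has finite $\dprk$-rank, that only finitely many pairs $(p,n)$ have $\alpha_{p,n}=\omega$, and that $\mathcal{P}:=\mathrm{U}_{\ge\aleph_0}(A)$ is finite. I would split $A=B\oplus C$, where $B=\bigoplus_{\alpha_{p,n}=\omega}\mathbb{Z}(p^n)^{(\omega)}$ collects the summands of infinite Ulm invariant and $C$ the remaining ones. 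Then $B$ has finite exponent with $\mathrm{U}_{\ge\aleph_0}(p;B)=\mathrm{U}_{\ge\aleph_0}(p;A)$, so $\dprk(B)=\sum_p d(\mathrm{U}_{\ge\aleph_0}(p;A))$ by Proposition~\ref{P:BoundedExponent}; and $C$ has all Ulm invariants finite, has unbounded exponent (as $A$ does while $B$ does not), and satisfies $\mathrm{U}_{\ge\aleph_0}(C)=\mathcal{P}$. To evaluate $\dprk(C)$ I would split $C=C'\oplus C''$ into the primes of $\mathcal{P}$ and the primes of finite length: Lemma~\ref{L:unbounded-fin-exponents} gives $\dprk(C')=|\mathcal{P}|$, while $C''$ is non-singular, so every p.p.-definable subgroup of it is of finite index (when defined without a torsion conjunct) or finite (when a torsion conjunct is present, by coprimality of the primes); hence $C''$ meets the hypotheses of Lemma~\ref{L:UnbddExp+Dpmin}, which absorbs it and yields $\dprk(C)=\max\{1,|\mathcal{P}|\}$.

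If $\mathcal{P}=\emptyset$ the proposition follows at once: $C=C''$ is an unbounded-exponent group all of whose finite-exponent p.p.-subgroups are finite and all of whose unbounded-exponent p.p.-subgroups are of finite index, so Lemma~\ref{L:UnbddExp+Dpmin}, applied with the finite-exponent group $B$, gives $\dprk(A)=\dprk(B)+1=\sum_p d(\mathrm{U}_{\ge\aleph_0}(p;A))+1$, matching $\max\{1,0\}$. Assume now $\mathcal{P}\neq\emptyset$. The upper bound $\dprk(A)\le\sum_p d(\mathrm{U}_{\ge\aleph_0}(p;A))+|\mathcal{P}|$ is immediate from $\dprk(A)\le\dprk(B)+\dprk(C)$ in Corollary~\ref{C:BasicEquation}, so it remains to exhibit an inp-pattern of that depth.

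For the lower bound, and simultaneously the ``moreover'', I would take the union of two explicit families of p.p.-definable subgroups: a family witnessing $\dprk(B)=\sum_p d(\mathrm{U}_{\ge\aleph_0}(p;A))$ from Proposition~\ref{P:BoundedExponent}, together with the length family of Lemma~\ref{L:unbounded-fin-exponents}(2). Choosing the length family to be $\{p^rA\}_{p\in\mathcal{P}}$ makes every subgroup of unbounded exponent, and realising the Ulm part by divisibility formulas $p^a\mid p^b x$---which have unbounded exponent in the unbounded-length group $A$ by Fact~\ref{F:pp-formula}---gives an inp-pattern all of whose subgroups are of unbounded exponent; choosing instead the length family $\{\bigoplus_{q\neq p}A[q^r]\}_{p\in\mathcal{P}}$ together with the finite-exponent realisation of the Ulm pattern gives one all of whose subgroups are of finite exponent. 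By Fact~\ref{F:pp-commutes-with-sum} the relevant indices are computed componentwise over the primary decomposition $A=\bigoplus_p A_p$, the defining data for distinct primes being coprime, so by Proposition~\ref{P:inpattern-in-1based} the verification reduces, prime by prime, to checking that the union is still an inp-pattern inside each $A_p$.

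The main obstacle is precisely this single-prime check for $p\in\mathcal{P}$, where the $d(\mathrm{U}_{\ge\aleph_0}(p;A))$ Ulm jumps at the finitely many infinite-Ulm levels must be shown jointly independent from the one extra ``tail'' direction provided by the unbounded $p$-length, so as to reach depth $1+d(\mathrm{U}_{\ge\aleph_0}(p;A))$ in $A_p$. I would handle it by placing the tail witness ($p^rA$, respectively $A[p^r]$) at a level $r$ beyond all infinite-Ulm levels, so that the chosen levels remain $2$-separated and the strict index inequalities of Proposition~\ref{P:inpattern-in-1based} persist after omitting any single member; the contributions of distinct primes do not interfere because the componentwise computation localises each omission to the prime that contributes the corresponding member. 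Summing these single-prime patterns over $p$ then produces the required inp-pattern of depth $\sum_p d(\mathrm{U}_{\ge\aleph_0}(p;A))+|\mathcal{P}|$, completing both the equation and the final clause.
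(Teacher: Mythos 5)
Your skeleton matches the paper's: splitting off a finite-exponent part $B$ carrying the infinite Ulm invariants from a tail part $C$, computing $\dprk(B)=\sum_p d\big(\mathrm{U}_{\ge\aleph_0}(p;A)\big)$ and $\dprk(C)=\max\{1,|\mathcal{P}|\}$ via Proposition~\ref{P:BoundedExponent}, Lemma~\ref{L:unbounded-fin-exponents} and Lemma~\ref{L:UnbddExp+Dpmin}, getting the upper bound from Corollary~\ref{C:BasicEquation}, and settling the case $\mathcal{P}=\emptyset$ by Lemma~\ref{L:UnbddExp+Dpmin}; all of that is correct and is how the paper proceeds. The gap is in the lower bound, i.e.\ exactly in the ``single-prime check'' you flag as the main obstacle: your resolution of it is wrong. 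The obstruction there is \emph{nesting}, not separation of levels, and your prescription (fix the pattern for $B$ and the length family first, then place the tail witness at a level $r$ beyond all infinite-Ulm levels) produces nested groups, for which the indices demanded by Proposition~\ref{P:inpattern-in-1based} equal $1$. Concretely, let $\mathcal{P}=\{p\}$ and $A=\mathbb{Z}(p^2)^{(\omega)}\oplus\bigoplus_{n\ge 3}\mathbb{Z}(p^n)$, so the claimed rank is $2$. By Remark~\ref{R:p^n|p^mx}, an Ulm witness realised by a divisibility formula $p^a\mid p^b x$ defines $p^{a-b}A+A[p^b]\supseteq p^{a-b}A$, hence contains $p^rA$ as soon as $r\ge a-b$; since the parameters $a,b$ of the $B$-pattern lie at or near the infinite-Ulm levels, your choice of $r$ guarantees this. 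Then $p^rA$ lies inside every Ulm witness, so the intersection of the whole family is unchanged when any single Ulm witness is omitted, and the corresponding index is $1$: the family is not an inp-pattern. (Shrinking $r$ does not help blindly either: $p^rA$ vanishes on every level $n\le r$, so once $r$ reaches the infinite-Ulm levels it kills precisely the coordinates where the $B$-pattern's infinite indices live.) Dually, in the finite-exponent version every Ulm witness of exponent dividing $p^r$ is contained in the tail witness $A[p^r]$, so omitting the tail witness gives index $1$; and that version has a further cross-prime failure, since a witness defined with a $p$-power torsion conjunct defines $\{0\}$ in $A_q$ for every $q\neq p$, so the componentwise localisation you invoke breaks down unless the torsion bounds are padded by high powers of the other primes.

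What the missing step requires, and what the paper's proof actually does in its two cases, is to adapt the two families to each other by \emph{padding}: all Ulm witnesses are given one common, sufficiently deep tail part (the paper replaces $\varphi_i(A)$ by $\varphi_i(B)\oplus\left(C[r]+rC\right)$, respectively by $\varphi_i(B)\oplus\left(\varphi_i(C)+(mC)[m]\right)$), and all length witnesses are made full on the $B$-coordinate ($B\oplus\left(C[r]+\psi_j(C)\right)$, respectively $B\oplus\psi_j(C)$); Lemma~\ref{L:unbounded-fin-exponents}(1) is then invoked precisely to show that the length pattern's infinite indices survive intersection with that fixed padding. In your one-prime formulation the correct order of choices is the reverse of yours: first fix the tail depth $r$ (at or beyond the infinite-Ulm levels), then choose the Ulm witnesses with divisibility (resp.\ torsion) depth strictly beyond $r$. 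For instance, in the example above the pair $\{\,p^{r+2}\mid px,\ p^rA\,\}$ with $r\ge 2$ is a depth-$2$ inp-pattern: the first group meets level $2$ in the socle $p\,\mathbb{Z}(p^2)^{(\omega)}$, where $p^rA$ is zero, while on the tail levels it equals $p^{r+1}\mathbb{Z}(p^n)$, strictly inside $p^r\mathbb{Z}(p^n)$ infinitely often; whereas your $\{\,p^{2}\mid px,\ p^rA\,\}$ is nested and fails. Since the ``moreover'' clause rests on the same construction, it is not established either; the statement itself is true, but by the paper's padding argument, not by the union proposed here.
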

\begin{proof}
By Lemma \ref{L:UnbddExp-Strong}, there is only a finite number of pairs $(p,n)$ such that $\alpha_{p,n}=\omega$ and only finitely many primes $p$ such that $\alpha_{p,n}\neq 0$ for infinitely many $n$. In particular, the sets $\mathrm{U}_{\ge\aleph_0}(A)$ and $\mathrm{U}_{\ge\aleph_0}(p;A)$ are finite. Hence, for each prime $p$ we can set $n_p$ to be the largest integer $n$ such that $\alpha_{p,n}=\omega$ and set $n_p=0$ if it does not exist. 

Now, we can write $A$ as $B\oplus C$ where
$$
B= \bigoplus_{p} \bigoplus_{n \le n_p} \mathbb Z(p^n)^{(\alpha_{p,n})} \ \text{ and } \ C= \bigoplus_{p} \bigoplus_{n>n_p} \mathbb Z(p^n)^{(\alpha_{p,n})}.
$$
Note that $B$ has finite exponent and we then have that $\dprk(B) =\sum_p d(\mathrm{U}_{\ge\aleph_0}(p;B))$  by Proposition \ref{P:BoundedExponent}. Moreover, the infinite group $C$ can be written as $C_1\oplus C_2$, where $C_1$ is the direct sum of all infinite primary components, and hence a finite sum, whereas $C_2$ is only the direct sum of the finite ones. Therefore, any infinite p.p.-definable subgroup of $C_2$ of unbounded exponent must have finite index and every p.p.-definable subgroup of bounded exponent is finite.
 
Assume that $C_1$ is trivial. Since $B$ has finite exponent, applying Lemma \ref{L:UnbddExp+Dpmin} we obtain that 
$$
\dprk(A) =  1 + \dprk(B) = 1+ \sum_p d \big(\mathrm{U}_{\ge\aleph_0}(p;B)\big)
.$$
This agrees with the equation in the statement because the triviality of $C_1$ implies that $\mathrm{U}_{\geq \aleph_0}(A)=\emptyset$ and $\mathrm{U}_{\ge\aleph_0}(p;B)=\mathrm{U}_{\ge\aleph_0}(p;A)$ by the definition of $B$.

Now, assume that $C_1$ is non-trivial, {\it i.e.} $\mathrm{U}_{\ge\aleph_0}(A)$ is non-empty. Again by Lemma \ref{L:UnbddExp+Dpmin} we get that 
$$
\dprk(A) = \dprk(B\oplus C) = \dprk(B\oplus C_1)
$$ 
and moreover note that $\dprk(C_1)=|\mathrm{U}_{\ge\aleph_0}(A)|$ by Lemma \ref{L:unbounded-fin-exponents}(2), which is finite by our first observation. Furthermore, we also have by Lemma \ref{L:UnbddExp+Dpmin} that $\dprk(C)=\dprk(C_1)$ and that any set of p.p.-definable subgroups that witness an inp-pattern in $C$ also witness an inp-pattern (of the same depth) in $C_1$, and {\it viceversa}. Therefore, we need to show that $\dprk(A) = \dprk(B)+\dprk(C)$. The inequality $\dprk(A)\leq \dprk(B)+\dprk(C)$ follows from Corollary \ref{C:BasicEquation}. To show the other inequality, we will show that we may lift inp-patterns of $B$ and $C$ to an inp-pattern of $A$. By the above discussion, we may assume that $C=C_1$.

Let $r$ be the exponent of $B$, {\it i.e.} the product of all $n_p$ for $p$ with $\mathrm{U}_{\ge\aleph_0}(p;A)\neq \emptyset$. It then follows that
$$
rA = \{0\}\oplus rC \text{ \ and \ } A[r] = B \oplus C[r].
$$ 
Recall that $B$ has finite {\rm dp}-rank. Let $\{\varphi_i(x)\}_{i<k_1}$ be a family of p.p. formulas witnessing an inp-pattern of maximal depth in $B$, and suppose that each $\varphi_i(B)$ is a group. Moreover, since $B$ has exponent $r$, there is no harm in assuming that the formula $\varphi_i(x)$ implies $rx=0$. 
Now, set $k_2=|\mathrm{U}_{\ge\aleph_0}(A)|$ and consider a family $\{\psi_i(x)\}_{i<k_2}$ of p.p. formulas witnessing an inp-pattern in $C$. By Lemma \ref{L:unbounded-fin-exponents}(2), we may distinguish two cases.

\noindent {\bf Case 1}. Choose each $\psi_i(x)$ in a way that $\psi_i(C)$ is subgroup of $C$ of the form $p^rC$ for $p$ in $\mathrm{U}_{\ge\aleph_0}(A)$. We then have that $\psi_i(C)$ is subgroup of $rC$. Then, consider the following p.p.-definable subgroups of $A = B\oplus C$:
$$
\varphi_i(B) \oplus \left( \varphi_i(C) + rC \right) \text{ \ for $i<k_1$ \ and \ } B \oplus \left(\psi_i(C) + C[r] \right) \text{ \ for $i<k_2$}.
$$
Note that all of them have unbounded exponent. Since the $\dprk$-rank of an abelian group is the same as the breadth of the semilattice of commensurable classes of p.p.-definable groups, we can replace the groups above by commensurable ones, possibly not p.p.-definable. Now, as $C[r]$ is contained up to finite index in $rC$, we have that $rC$ has finite index in $C[r]+rC$. Since in addition $\varphi_i(C)$ is contained in $C[r]$,
we get that $\varphi_i(C)+rC$ has finite index in $C[r]+rC$ and so we can replace the groups $\varphi_i(B)\oplus (\varphi_i(C)+rC)$ by $\varphi_i(B)\oplus (C[r]+ rC)$ for $i<k_1$. Then, since $\psi_i(C)$ is contained in $rC$, it is easy to see that the subgroups
$$
\varphi_i(B) \oplus \left(C[r]+rC \right)  \text{ \ for $i<k_1$ \  and \ } B \oplus \left (C[r]+\psi_i(C) \right) \text{ \ for $i<k_2$}.
$$
yield that the $\dprk$-rank of $A$ is at least $k_1+k_2$, as desired.

\noindent {\bf Case 2}. Suppose now that each $\psi_i(x)$ defines a subgroup $C[m_i]$ with $m_i>r$. Thus, we may take $m_i$ in a way that $\psi_i(B) = B$. Let $m$ be the product of all $m_i$ and note that $(mA)[m] = \{0\}\oplus (mC)[m]$. Hence, the following subgroups of finite exponent are p.p.-definable:
$$
\varphi_i(B) \oplus \left(\varphi_i(C) + (mC)[m] \right)  \text{ \ for $i<k_1$ \  and \ } B \oplus \psi_i(C) \text{ \ for $i<k_2$}.
$$
Note that $\psi_i(C)=C[m_i]$ is contained up to finite index in $(mC)[m]=mC\cap C[m]$. Indeed, $C[m_i]\subseteq C[m]$ by definition and $C[m_i]$ is contained in $mC$ up to finite index by Lemma \ref{L:unbounded-fin-exponents}(1). 
Now it is easy to see that the family formed by all these groups yields an inp-pattern of depth $k_1+k_2$. This finishes the proof.
\end{proof}

\section{A characterisation of strong abelian groups}\label{s:char-strong}
After the previous section, we can easily characterise all abelian groups whose theory is strong, as well as of finite $\dprk$-rank.

\begin{theorem}\label{T:Strong}
An abelian group $A$ is strong if and only if there is only a finite number of primes $p$ such that $A/pA$ is infinite and for all primes $p$ the group $p^n A[p]/p^{n+1}A[p]$ is infinite only for finitely many $n$.

Furthermore, strong abelian groups are precisely those which are elementarily equivalent to a group of the form 
$$
\bigoplus_p \left( \bigoplus_{n>0} \mathbb{Z}(p^n)^{(\alpha_{p,n})}\oplus (\mathbb{Z}_{(p)})^{(\beta_p)}\oplus \mathbb{Z}(p^\infty)^{(\gamma_p)}\right) \oplus \mathbb{Q}^{(\delta)}
$$
with $\beta_p = \omega $ only for finitely many primes $p$ and there are only a finite number of pairs $(p,n)$ such that $\alpha_{p,n}=\omega$ and only finitely many primes $p$ with $\alpha_{p,n}\neq 0$ for infinitely many $n$. 
\end{theorem}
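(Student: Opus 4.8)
The plan is to reduce to a strict Szmielew group and analyse its four natural direct summands separately, using the per-summand results of Section \ref{s:cases} together with the characterisation of strongness via inp-patterns of p.p.-definable subgroups.

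First I would note that being strong is a property of $\mathrm{Th}(A)$, and that $A/pA$ (up to elementary equivalence) and $\mathrm{U}(p,n;A)=|(p^nA)[p]/(p^{n+1}A)[p]|$ are invariants of the theory. Hence, by Szmielew's theorem, there is no loss in assuming that $A$ is the strict Szmielew group
\[
A=\bigoplus_p \left( \bigoplus_{n>0} \mathbb{Z}(p^n)^{(\alpha_{p,n})}\oplus (\mathbb{Z}_{(p)})^{(\beta_p)}\oplus \mathbb{Z}(p^\infty)^{(\gamma_p)}\right) \oplus \mathbb{Q}^{(\delta)}.
\]
Write $A=T\oplus F\oplus D\oplus \mathbb{Q}^{(\delta)}$, where $T$ is the direct sum of all summands $\mathbb{Z}(p^n)^{(\alpha_{p,n})}$, while $F=\bigoplus_p(\mathbb{Z}_{(p)})^{(\beta_p)}$ and $D=\bigoplus_p\mathbb{Z}(p^\infty)^{(\gamma_p)}$.

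Next I would reduce strongness of $A$ to strongness of each summand. Corollary \ref{C:BasicEquation} gives that a finite direct sum of strong groups is strong; conversely, if a summand $A_1$ of $A_1\oplus A_2$ is not strong, then a witnessing inp-pattern of p.p.-definable subgroups lifts to $A_1\oplus A_2$ by Fact \ref{F:pp-commutes-with-sum}, so strongness passes to summands. Thus $A$ is strong if and only if each of $T,F,D,\mathbb{Q}^{(\delta)}$ is strong. Now $\mathbb{Q}^{(\delta)}$ is $\dprk$-minimal and $D$ is $\omega$-stable by Proposition \ref{P:Divisible+Torsion}, so both are always strong and impose no condition on $\gamma_p$ and $\delta$. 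By Proposition \ref{P:T-FStrong}, $F$ is strong exactly when $\mathrm{Tf}_{\ge\aleph_0}(A)=\{p:\beta_p=\omega\}$ is finite, and by Lemma \ref{L:UnbddExp-Strong}, $T$ is strong exactly when there are only finitely many pairs $(p,n)$ with $\alpha_{p,n}=\omega$ and only finitely many primes $p$ with $\alpha_{p,n}\neq 0$ for infinitely many $n$. Conjoining these three conditions yields the \emph{furthermore} part of the statement.

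It then remains to match these Szmielew conditions with the two invariant conditions. For this I would compute, on the Szmielew group, that $\mathrm{U}(p,n;A)=p^{\alpha_{p,n}}$ (as recorded in Section \ref{ss:szmielew groups}), so that $(p^nA)[p]/(p^{n+1}A)[p]$ is infinite precisely when $\alpha_{p,n}=\omega$; and that, since the divisible summands vanish modulo $p$ and the summands $\mathbb{Z}(q^n),\mathbb{Z}_{(q)}$ with $q\neq p$ contribute nothing,
\[
A/pA\cong \mathbb{Z}(p)^{(\sum_{n}\alpha_{p,n}+\beta_p)},
\]
so that $A/pA$ is infinite precisely when $\beta_p=\omega$, or $\alpha_{p,n}=\omega$ for some $n$, or $\alpha_{p,n}\neq 0$ for infinitely many $n$. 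A short bookkeeping argument then finishes the proof: finiteness of $\{p:A/pA\text{ infinite}\}$ is equivalent to the simultaneous finiteness of $\{p:\beta_p=\omega\}$, of $\{p:\exists n\ \alpha_{p,n}=\omega\}$ and of $\{p:\alpha_{p,n}\neq 0\text{ for infinitely many }n\}$; while finiteness of the set of pairs $\{(p,n):\alpha_{p,n}=\omega\}$ is exactly the conjunction of ``$\{p:\exists n\ \alpha_{p,n}=\omega\}$ is finite'' with the second invariant condition that for each $p$ there are only finitely many $n$ with $\alpha_{p,n}=\omega$.

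I expect the main obstacle to be these two bookkeeping matchings rather than any deep model theory: establishing the converse direction of the direct-sum reduction (that strongness passes to summands), and disentangling ``finitely many pairs $(p,n)$ with $\alpha_{p,n}=\omega$'' into the per-prime statement (the second invariant condition) together with the global statement that only finitely many primes occur. The computation of $A/pA$ is routine given Fact \ref{F:pp-commutes-with-sum} and the structure of the four summand types.
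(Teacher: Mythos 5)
Your proof is correct and takes essentially the same route as the paper's: reduce via Szmielew's theorem to a strict Szmielew group, split it into its natural summands, apply Proposition \ref{P:Divisible+Torsion}, Proposition \ref{P:T-FStrong} and Lemma \ref{L:UnbddExp-Strong} through the direct-sum reduction of Corollary \ref{C:BasicEquation}, and then translate the Szmielew conditions into the invariant conditions. The only difference is that you explicitly carry out the two steps the paper leaves implicit or to the reader (the lifting argument showing strongness passes to direct summands, and the final bookkeeping identifying $A/pA$ and $\mathrm{U}(p,n;A)$ on the Szmielew group), and both are done correctly.
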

\begin{proof}
As being strong is a property of the theory, we may assume that $A$ is a strict Szmielew group
$$
\bigoplus_p \left( \bigoplus_{n>0} \mathbb{Z}(p^n)^{(\alpha_{p,n})}\oplus (\mathbb{Z}_{(p)})^{(\beta_p)}\oplus \mathbb{Z}(p^\infty)^{(\gamma_p)}\right) \oplus \mathbb{Q}^{(\delta)}.
$$
Thus, to describe when the group $A$ is strong it suffices by Corollary \ref{C:BasicEquation} to characterise when the following subgroups 
$$
\bigoplus_p \mathbb{Z}(p^\infty)^{(\gamma_p)}, \ \bigoplus_p (\mathbb{Z}_{(p)})^{(\beta_p)} \oplus \mathbb Q^{(\delta)},  \ \text{ and } \  \bigoplus_p \left( \bigoplus_{n>0} \mathbb{Z}(p^n)^{(\alpha_{p,n})}\right),
$$
are strong. Therefore, applying Proposition \ref{P:Divisible+Torsion} and \ref{P:T-FStrong} as well as Lemma \ref{L:UnbddExp-Strong} we obtain the desired characterization from the second part of the statement. For the first part, it is easy to see that any abelian group $A$ for which there are only finitely many primes $p$ with $A/pA$ infinite and that for all primes $p$ the set $\mathrm{U}_{\ge \aleph_0}(p;A)$ is finite must be elementarily equivalent to a strict Szmielew group like in the statement; the details are left to the reader. This yields the result.
\end{proof}

Concerning abelian groups of finite $\dprk$-rank, its characterisation was already obtained in \cite[Theorem 5.23]{ADHMS} modulo the equivalence between finite dp-rank and finite ${\rm vc}$-density. In fact, using the results from the previous section together with Corollary \ref{C:BasicEquation}, one can easily show the statement.

\begin{fact}\label{F:CharacFinDprank}
An abelian group has finite $\dprk$-rank if and only if there is only a finite number of primes $p$ for $A$ such that either $A/pA$ or $A[p]$ are infinite, and for all primes $p$ the group $p^n A[p]/p^{n+1}A[p]$ is infinite only for finitely many $n$.

Furthermore, abelian groups of finite $\dprk$-rank are those which are elementarily equivalent to a group of the form 
$$
\bigoplus_p \left( \bigoplus_{n>0} \mathbb{Z}(p^n)^{(\alpha_{p,n})}\oplus (\mathbb{Z}_{(p)})^{(\beta_p)}\oplus \mathbb{Z}(p^\infty)^{(\gamma_p)}\right) \oplus \mathbb{Q}^{(\delta)}
$$
with $\beta_p = \omega $ or $\gamma_p=\omega$ only for finitely many primes $p$, for each prime $p$ there are only finitely many $n$ with $\alpha_{p,n}=\omega$ and there is only a finite number of primes $p$ such that either $\alpha_{p,n}\neq 0$ for infinitely many $n$ or $\alpha_{p,n}=\omega$ for some $n$. 
\end{fact}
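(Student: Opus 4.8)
The plan is to mirror the proof of Theorem \ref{T:Strong}, exploiting that having finite $\dprk$-rank is a property of the theory, so that we may assume $A$ is a strict Szmielew group
$$
\bigoplus_p \left( \bigoplus_{n>0} \mathbb{Z}(p^n)^{(\alpha_{p,n})}\oplus (\mathbb{Z}_{(p)})^{(\beta_p)}\oplus \mathbb{Z}(p^\infty)^{(\gamma_p)}\right) \oplus \mathbb{Q}^{(\delta)}.
$$
Writing $A$ as the direct sum of its three homogeneous pieces---the divisible torsion part $\bigoplus_p \mathbb{Z}(p^\infty)^{(\gamma_p)}$, the torsion-free part $\bigoplus_p (\mathbb{Z}_{(p)})^{(\beta_p)}\oplus \mathbb{Q}^{(\delta)}$, and the torsion part $\bigoplus_p\bigoplus_{n>0}\mathbb{Z}(p^n)^{(\alpha_{p,n})}$---iterating the two inequalities of Corollary \ref{C:BasicEquation} reduces the problem to deciding when each summand separately has finite $\dprk$-rank: the upper bound shows that finiteness of all three pieces implies finiteness for $A$, while the lower bound shows the converse.

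First I would dispatch the torsion-free and the torsion summands. For the torsion-free piece, Proposition \ref{P:T-FStrong} together with Corollary \ref{C:TF} shows that its $\dprk$-rank is finite exactly when it is strong, that is, when $\beta_p=\omega$ for only finitely many primes $p$. For the torsion piece $\bigoplus_p\bigoplus_{n>0}\mathbb{Z}(p^n)^{(\alpha_{p,n})}$, Lemma \ref{L:UnbddExp-Strong} likewise identifies finite $\dprk$-rank with strength, which holds exactly when there are only finitely many pairs $(p,n)$ with $\alpha_{p,n}=\omega$ and only finitely many primes $p$ with $\alpha_{p,n}\neq 0$ for infinitely many $n$.

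The decisive point, and the only place where the analysis departs from that of Theorem \ref{T:Strong}, is the divisible torsion summand. By Proposition \ref{P:Divisible+Torsion} this group is always $\omega$-stable, hence strong, yet its $\dprk$-rank equals $\max\{1,|\mathrm{D}_{\ge\aleph_0}(A)|\}$, which is finite precisely when $\gamma_p=\omega$ for only finitely many primes $p$. Thus strength imposes no constraint on the $\gamma_p$, whereas finiteness of the $\dprk$-rank forces $\mathrm{D}_{\ge\aleph_0}(A)$ to be finite; this extra requirement is exactly what distinguishes the two characterisations. Collecting the three conditions yields the second, Szmielew-theoretic, characterisation in the statement.

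It then remains to match these invariant conditions with the first formulation in terms of $A/pA$, $A[p]$ and the Ulm quotients $p^nA[p]/p^{n+1}A[p]$. This is a bookkeeping step: in the strict Szmielew group one computes that $A/pA$ is infinite if and only if $\beta_p=\omega$, or $A$ has unbounded $p$-length, or some $\alpha_{p,n}=\omega$; that $A[p]$ is infinite under the same condition with $\gamma_p$ in place of $\beta_p$; and that $p^nA[p]/p^{n+1}A[p]$ is infinite precisely when $\alpha_{p,n}=\omega$. Hence requiring $A/pA$ and $A[p]$ infinite for only finitely many $p$, together with $\mathrm{U}(p,n;A)$ infinite for only finitely many $n$ for each fixed $p$, is equivalent to the three conditions on the invariants obtained above. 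I expect no genuine obstacle here beyond keeping the several finiteness conditions straight; the real content is the observation about the divisible torsion part noted above.
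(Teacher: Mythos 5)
Your proposal is correct and takes essentially the same route the paper intends: the paper establishes this Fact exactly by reducing to a strict Szmielew group, splitting off the three homogeneous summands, and combining Corollary \ref{C:BasicEquation} with Propositions \ref{P:T-FStrong}, \ref{P:Divisible+Torsion} and Lemma \ref{L:UnbddExp-Strong}, just as you do. Your key observation—that the divisible torsion summand is always strong yet has finite dp-rank only when $\gamma_p=\omega$ for finitely many $p$, which is the sole point of divergence from Theorem \ref{T:Strong}—is precisely the content the paper relies on.
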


Let us remark that the only obstacle for a strong abelian group to have finite dp-rank is the existence of infinitely many primes with infinite $p$-torsion. More precisely, we get:

\begin{corollary}
A strong abelian group $A$ has finite dp-rank if and only if there is only a finite number of primes $p$ with $A[p]$ infinite.
\end{corollary}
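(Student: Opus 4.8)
The plan is to read the corollary off the two characterisations already at hand --- Theorem \ref{T:Strong} for strongness and Fact \ref{F:CharacFinDprank} for finiteness of the $\dprk$-rank --- by pinning down the single invariant that distinguishes them once strongness is assumed.

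First I would observe that infiniteness of the $\emptyset$-definable group $A[p]$ is preserved under elementary equivalence, just as strongness and finiteness of the $\dprk$-rank are properties of the theory; so there is no harm in taking $A$ to be a strict Szmielew group
$$
\bigoplus_p \left( \bigoplus_{n>0} \mathbb{Z}(p^n)^{(\alpha_{p,n})}\oplus (\mathbb{Z}_{(p)})^{(\beta_p)}\oplus \mathbb{Z}(p^\infty)^{(\gamma_p)}\right) \oplus \mathbb{Q}^{(\delta)}.
$$
Computing the $p$-torsion summand by summand, only the factors $\mathbb{Z}(p^n)^{(\alpha_{p,n})}$ and $\mathbb{Z}(p^\infty)^{(\gamma_p)}$ contribute, each copy of $\mathbb{Z}(p^n)$ or $\mathbb{Z}(p^\infty)$ supplying a single copy of $\mathbb{Z}(p)$; the torsion-free summands and the components for other primes contribute nothing. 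Hence $A[p]$ is infinite exactly when $\gamma_p=\omega$ or $\alpha_{p,n}=\omega$ for some $n$.

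Next I would bring in strongness through Theorem \ref{T:Strong}, which in particular ensures that there are only finitely many pairs $(p,n)$ with $\alpha_{p,n}=\omega$, and \emph{a fortiori} only finitely many primes $p$ admitting some $n$ with $\alpha_{p,n}=\omega$. Together with the computation above this already yields that, under the standing hypothesis that $A$ is strong, the primes $p$ with $A[p]$ infinite are finite in number if and only if $\gamma_p=\omega$ for only finitely many $p$.

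It then remains to match the two lists of conditions. Strongness, per Theorem \ref{T:Strong}, guarantees that $\beta_p=\omega$ for finitely many primes, that only finitely many pairs $(p,n)$ satisfy $\alpha_{p,n}=\omega$ (whence both the per-prime finiteness of such $n$ and the finiteness of the primes carrying some such $n$), and that $\alpha_{p,n}\neq 0$ for infinitely many $n$ occurs for finitely many primes. These account for every requirement for finite $\dprk$-rank in Fact \ref{F:CharacFinDprank} except the condition that $\gamma_p=\omega$ for finitely many $p$. Thus, for a strong group, finiteness of the $\dprk$-rank is equivalent to $\gamma_p=\omega$ holding for only finitely many primes, which by the previous step is equivalent to $A[p]$ being infinite for only finitely many primes, giving the corollary. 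I do not anticipate a genuine obstacle: the argument is essentially the bookkeeping of the two characterisations, the only real content being the remark that, once strongness holds, the sole remaining obstruction to finite $\dprk$-rank is an overabundance of Pr\"ufer summands --- precisely the source of infinite $p$-torsion that strongness does not already bound.
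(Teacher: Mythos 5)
Your overall route is exactly the paper's intended one: the corollary is bookkeeping between Theorem \ref{T:Strong} and Fact \ref{F:CharacFinDprank}, and your matching of the two lists of conditions (strongness together with ``$\gamma_p=\omega$ for only finitely many $p$'' is equivalent to finite $\dprk$-rank) is correct. However, your computation of the $p$-torsion contains an error. For a strict Szmielew group one has
\[
A[p]\;\cong\;\mathbb{Z}(p)^{\left(\gamma_p+\sum_{n>0}\alpha_{p,n}\right)},
\]
so $A[p]$ is infinite exactly when $\gamma_p=\omega$, \emph{or} $\alpha_{p,n}=\omega$ for some $n$, \emph{or} $\alpha_{p,n}\neq 0$ for infinitely many $n$. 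Your claim that ``$A[p]$ is infinite exactly when $\gamma_p=\omega$ or $\alpha_{p,n}=\omega$ for some $n$'' omits the third possibility: the group $\bigoplus_{n>0}\mathbb{Z}(p^n)$ has all $\alpha_{p,n}=1$ and $\gamma_p=0$, yet its $p$-torsion is $\bigoplus_{n>0}\mathbb{Z}(p^n)[p]\cong\mathbb{Z}(p)^{(\omega)}$, which is infinite. As stated (before strongness has been invoked) the claim is false.

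The slip happens to be harmless for the corollary, because strongness bounds the missed case as well: the third clause of Theorem \ref{T:Strong} says there are only finitely many primes $p$ with $\alpha_{p,n}\neq 0$ for infinitely many $n$. Hence, under the standing hypothesis that $A$ is strong, the set of primes with $A[p]$ infinite differs from $\{p:\gamma_p=\omega\}$ by a finite set, which is all your argument needs in either direction. So the proof is repaired by citing that third clause alongside the second one at the point where you pass from ``$A[p]$ infinite for finitely many $p$'' to ``$\gamma_p=\omega$ for finitely many $p$''; the rest of your bookkeeping goes through unchanged.
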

\section{An equation to compute the dp-rank}\label{s:main}

Here we prove our main result which we deduce from the following one. 
%

\begin{proposition}
Let $A$ be an infinite strict Szmielew group of finite $\dprk$-rank. Define $\mathcal P_1$ to be the set of primes $p$ such that $\alpha_{p,n}\neq 0$ for infinitely $n$ and $\mathcal P_2$ to be the set of primes $p$ such that $\alpha_{p,n} =0$ for all but finitely many with at least one $\alpha_{p,n}=\omega$.

One the following holds:
\begin{enumerate}
\item $A$ is torsion-free and
$$
\dprk(A)= \max \left\{ 1,\dprk \left( \bigoplus_{\substack{ p \\ \beta_p=\omega}} (\mathbb{Z}_{(p)})^{(\beta_p)} \right) \right\}.
$$
\item $A$ is of finite exponent and 
$$
\dprk(A)= \dprk \left( \bigoplus_{p\in \Pp_2} \bigoplus_{n>0}  \mathbb{Z}(p^n)^{(\alpha_{p,n})} \right).
$$ 

\item $A$ has unbounded exponent, torsion but has finite $p$-length for every prime $p$ and

\begin{align*}
\dprk(A) = & \  \dprk \left( \bigoplus_{p\in \Pp_2} \bigoplus_{n>0}  \mathbb{Z}(p^n)^{(\alpha_{p,n})} \right) \\ & + \max \left\{ 1, \dprk \left( \bigoplus_{\substack{ p \\ \beta_p=\omega}} (\mathbb{Z}_{(p)})^{(\beta_p)} \right)  , \dprk \left( \bigoplus_{\substack{ p \\  \gamma_p=\omega}} \mathbb{Z}(p^\infty)^{(\gamma_p)} \right) \right\}.
\end{align*}
\item $A$ has unbounded $p$-length for every prime $p$ and
\begin{align*}
\dprk(A) = & \  \dprk \left( \bigoplus_{p\in \Pp_1\cup\Pp_2} \bigoplus_{n>0}  \mathbb{Z}(p^n)^{(\alpha_{p,n})} \right) \\ & + \max \left\{ \dprk \left( \bigoplus_{\substack{ p \\ \beta_p=\omega}} (\mathbb{Z}_{(p)})^{(\beta_p)} \right)  , \dprk \left( \bigoplus_{\substack{ p \\  \gamma_p=\omega}} \mathbb{Z}(p^\infty)^{(\gamma_p)} \right) \right\}.
\end{align*}
\end{enumerate}

\end{proposition}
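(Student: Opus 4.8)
The plan is to reduce, via the structural decomposition of a strict Szmielew group, to combining the homogeneous pieces already analysed in Section \ref{s:cases}. I would write $A = T\oplus D\oplus F$, where $T=\bigoplus_p\bigoplus_{n>0}\mathbb Z(p^n)^{(\alpha_{p,n})}$ is the reduced torsion part, $D=\bigoplus_p\mathbb Z(p^\infty)^{(\gamma_p)}$ the divisible torsion part, and $F=\bigoplus_p(\mathbb Z_{(p)})^{(\beta_p)}\oplus\mathbb Q^{(\delta)}$ the torsion-free part. Since $A$ has finite $\dprk$-rank, Fact \ref{F:CharacFinDprank} guarantees that $\mathcal P_1,\mathcal P_2$ and the sets $\mathrm{Tf}_{\ge\aleph_0}(A),\mathrm D_{\ge\aleph_0}(A)$ are all finite. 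I would further split each summand into a ``large'' part and a $\dprk$-minimal remainder: $T=T^\ast\oplus T'$ (with $T^\ast$ over $\mathcal P_2$, resp.\ $\mathcal P_1\cup\mathcal P_2$, and $T'$ non-singular, hence $\dprk$-minimal by \cite[Proposition 5.27]{ADHMS}), $D=D_\omega\oplus D'$ and $F=F_\omega\oplus F'$, where $D_\omega,F_\omega$ collect the summands repeated infinitely often. Cases $(1)$ and $(2)$ are then immediate: they are exactly Propositions \ref{P:T-FStrong} and \ref{P:BoundedExponent}, after discarding the finite and the $\dprk$-minimal summands, which contribute nothing by Corollary \ref{C:BasicEquation}.

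The core of the argument is a single combination lemma that I would isolate first, namely $\dprk(F_\omega\oplus D_\omega)=\max\{\dprk(F_\omega),\dprk(D_\omega)\}$. The point is a separation inside the semilattice $\LPP(F_\omega\oplus D_\omega)$. Every p.p.-definable subgroup of unbounded exponent contains $D_\omega$, since divisibility conditions are trivially satisfied on a divisible group, and meets $F_\omega$ in a divisibility subgroup; these thus form a sub-semilattice isomorphic to $\LPP(F_\omega)$. Every p.p.-definable subgroup of finite exponent kills the torsion-free $F_\omega$ and lies inside $D_\omega$, forming a sub-semilattice isomorphic to the finite-exponent part of $\LPP(D_\omega)$. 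As every finite-exponent class lies below every unbounded-exponent class, any meet reduces to a meet taken within one of the two sub-semilattices, whence the breadth is the maximum of the two. The reverse inequality is clear because the inp-patterns of Propositions \ref{P:T-FStrong} and \ref{P:Divisible+Torsion} still witness the respective depths in the sum: their witnessing subgroups restrict to the constant group ($D_\omega$, resp.\ the trivial group) on the other factor. By Corollary \ref{C:breadth=dp} this gives the claimed equality.

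With this lemma in hand I would treat cases $(3)$ and $(4)$ uniformly. For the upper bound, sub-additivity (Corollary \ref{C:BasicEquation}) yields $\dprk(A)\le\dprk(T^\ast)+\dprk(F_\omega\oplus D_\omega)$ once the $\dprk$-minimal unbounded-exponent remainders $T',F',\mathbb Q^{(\delta)}$ and any unbounded-exponent part of $D'$ have been absorbed using Lemma \ref{L:UnbddExp+Dpmin}, while the finite-exponent remainders, being finite, are simply discarded. In case $(4)$, where $T^\ast$ already has unbounded exponent, this absorption adds nothing and no ``$1$'' appears; in case $(3)$, where $T^\ast=B$ has finite exponent, the unbounded exponent of $A$ must come from $R:=T'\oplus D\oplus F$, and Lemma \ref{L:UnbddExp+Dpmin} contributes the extra $+1$ precisely when $F_\omega\oplus D_\omega$ is itself of finite exponent, i.e.\ when $\max\{\dprk F_\omega,\dprk D_\omega\}=0$. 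Together these produce the term $\max\{1,\dprk F_\omega,\dprk D_\omega\}$.

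The main obstacle is the matching lower bound: lifting inp-patterns of $T^\ast$ and of the dominant factor among $F_\omega,D_\omega$ to a single inp-pattern of $A$ of full depth $\dprk(T^\ast)+\max\{\dprk F_\omega,\dprk D_\omega\}$, with the $+1$ in case $(3)$. This is exactly where the type of the witnessing subgroups matters: the inp-pattern of $D_\omega$ uses subgroups of finite exponent, that of $F_\omega$ uses subgroups of unbounded exponent, and the inp-pattern of $T^\ast$ can, by Proposition \ref{P:Torsion+UnbddLength}(2), be realised with either kind. I would therefore argue as in the two cases of the proof of Proposition \ref{P:Torsion+UnbddLength}: choose the $T^\ast$-pattern of the type opposite to the dominant factor, replace the witnessing subgroups by commensurable ones so that the relevant indices multiply instead of interfering, and read off that the depths add. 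Checking that these commensurability replacements keep every required index infinite, using Lemma \ref{L:unbounded-fin-exponents}(1) that $A[r]$ is contained up to finite index in $sA$, together with Lemma \ref{L:p.p.-non-trivial}, is the one genuinely delicate computation.
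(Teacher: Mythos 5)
Your skeleton is sound, and one piece of it is genuinely nicer than the paper's: the treatment of cases (1) and (2), the absorption of the $\dprk$-minimal remainders via Lemma \ref{L:UnbddExp+Dpmin}, and the upper bound via Corollary \ref{C:BasicEquation} all match the paper, while your combination lemma $\dprk(F_\omega\oplus D_\omega)=\max\{\dprk(F_\omega),\dprk(D_\omega)\}$ is correct as sketched. Indeed, a p.p.-definable subgroup of $F_\omega\oplus D_\omega$ of unbounded exponent is cut out by divisibility conditions alone, hence contains $\{0\}\oplus D_\omega$, whereas one of finite exponent involves a conjunct $mx=0$, hence annihilates $F_\omega$ and lies below every class of the first kind; so the breadth of $\LPP(F_\omega\oplus D_\omega)$ is the maximum of the two breadths and Corollary \ref{C:breadth=dp} applies. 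This is a cleaner substitute for the Claim in the paper's proof, which carries the reduced torsion part along on both sides of the maximum.

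The gap is in the lower bound, and your prescription there is exactly backwards: you propose to realise the $T^\ast$-pattern ``of the type opposite to the dominant factor'', whereas the paper takes it of the \emph{same} type (unbounded exponent against $A_{\mathcal X}$ in its Case 1, finite exponent against $A_{\mathcal Y}$ in its Case 2), and this matching is forced, not cosmetic. Suppose $\mathcal P_1\neq\emptyset$ (case (4)) and the dominant factor is $F_\omega$, whose pattern necessarily consists of divisibility subgroups. A finite-exponent pattern on $T^\ast$ is witnessed by formulas containing a conjunct $mx=0$ with $m>0$ (otherwise, by Lemma \ref{L:p.p.-non-trivial}, they would define unbounded-exponent subgroups), and such formulas annihilate $F_\omega$; the $F_\omega$-rows then no longer change the total intersection, so the array is not an inp-pattern. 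Moreover no commensurable repair exists: by Fact \ref{F:prufer} and Remark \ref{R:p^n|p^mx}, any p.p.-definable subgroup (or finite sum of such) whose $F_\omega$-part is infinite is defined by divisibility conditions alone, so its part on the unbounded-length component contains some $sA_{\mathcal P_1}$ and in particular has unbounded exponent, and a group of unbounded exponent is never commensurable with one of finite exponent. Worse, Lemma \ref{L:unbounded-fin-exponents}(1) --- the very lemma you invoke to make the repair work --- is the obstruction: since finite-exponent subgroups of the unbounded-length (finite-multiplicity) component are contained up to finite index in $sA_{\mathcal P_1}$, adding any divisibility correction makes all your $T^\ast$-rows pairwise commensurable there, collapsing that part of the pattern. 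Dually, if the dominant factor is $D_\omega$, every row whose $D_\omega$-part is proper must involve a torsion conjunct, hence restricts $A_{\mathcal P_1}$ to a finite-exponent subgroup, and by the same lemma a divisibility pattern on $A_{\mathcal P_1}$ dies under that restriction; trying to complete the $D_\omega$-rows by a group that is large on $A_{\mathcal P_1}$ forces a summand without torsion conjuncts, which is all of $D_\omega$ on the divisible part and kills the $D_\omega$-pattern instead. The mixed-type completions of Proposition \ref{P:Torsion+UnbddLength} that you want to imitate exist only because there the second factor is torsion of unbounded length, so that $C[r]$ is infinite and comparable to $rC$; they degenerate when the second factor is torsion-free ($F_\omega[r]=\{0\}$) or divisible ($rD_\omega=D_\omega$). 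The statement is of course still true; the fix is the paper's: use the ``moreover'' clause of Proposition \ref{P:Torsion+UnbddLength} to take the $T^\ast$-pattern of the same type as the dominant factor, exploit strictness (so that the primes and exponents occurring are coprime to those of the other factors), and complete the remaining, forcedly finite-exponent, $A_{\mathcal P_2}$-rows with the fixed p.p. groups $rA$ and $A[r]$, where $r$ is the exponent of $A_{\mathcal P_2}$.
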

\begin{proof}
Let $A$ be the strict Szmielew group
$$
\bigoplus_p \left( \bigoplus_{n>0} \mathbb{Z}(p^n)^{(\alpha_{p,n})}\oplus (\mathbb{Z}_{(p)})^{(\beta_p)}\oplus \mathbb{Z}(p^\infty)^{(\gamma_p)}\right) \oplus \mathbb{Q}^{(\delta)}.
$$ 
Since $A$ has finite $\dprk$-rank, we have by Fact \ref{F:CharacFinDprank} that  $\beta_p < \omega$ and $\gamma_p<\omega$ for all but finitely many $p$ and there is only a finite number of primes $p$ such that $\alpha_{p,n}\neq 0$ for infinitely many $n$ and $\alpha_{p,n}=\omega$ only for finitely many pairs $(p,n)$. 

We compute the $\dprk$-rank of $A$. To do so, since the $\dprk$-rank of a torsion-free group has already been computed in Proposition \ref{P:T-FStrong}, we may assume that $A$ is not torsion-free. Similarly, the finite exponent case was shown in Proposition \ref{P:BoundedExponent}. Thus, we also assume that $A$ has unbounded exponent. Furthermore, in case that $\delta=\omega$, the summand preceding $\oplus \, \mathbb Q^{(\omega)}$ must have finite exponent and so $\beta_p=\gamma_p=0$ for every prime $p$ and $\alpha_{p,n}=0$ for all but finitely many pairs $(p,n)$. If this subgroup of $A$ has exponent $m$, it is p.p.-definable by the formula $mx=0$. In addition, we then have that $mA = \mathbb Q^{(\omega)}$, which is $\dprk$-minimal, and so by Corollary \ref{C:BasicEquation} and Proposition \ref{P:BoundedExponent} we obtain 
$$
\dprk(A) = \dprk \left( \bigoplus_p \left( \bigoplus_{n>0} \mathbb{Z}(p^n)^{(\alpha_{p,n})} \right) \right) + \dprk \left( \mathbb Q^{(\omega)} \right) .
$$
At this point, we have already obtained the equations for $(1)$ and $(2)$, as well as a particular case of $(3)$. Hence, for the rest, we assume that $\delta=0$ and show the remaining cases. To ease notation let
\begin{itemize}
\item $\mathcal P$ to be the set of primes $p$ such that $\alpha_{p,n}\neq 0$ for some $n$,
\item $\mathcal X$ the set of primes $p$ such that  $\beta_p\neq 0$, and finally
\item $\mathcal Y$ denote the set of primes $p$ such that $\gamma_p\neq 0$.
\end{itemize} 
As $A$ is an infinite strict Szmielew group we have that some of these sets of primes are non-empty. 
We then have
$$
A=  \bigoplus_{p\in \mathcal P} \bigoplus_{n>0} \mathbb{Z}(p^n)^{(\alpha_{p,n})} \oplus \bigoplus_{p\in \mathcal X}  (\mathbb{Z}_{(p)})^{(\beta_p)} \oplus \bigoplus_{p\in \mathcal Y} \mathbb{Z}(p^\infty)^{(\gamma_p)}.
$$ 
To ease notation, set $A_{\mathcal P}$, $A_{\mathcal X}$ and  $A_{\mathcal Y}$ to be
$$
A _{\mathcal P}=  \bigoplus_{p\in \mathcal P} \bigoplus_{n>0} \mathbb{Z}(p^n)^{(\alpha_{p,n})}, \ A_{\mathcal X} = \bigoplus_{p\in \mathcal X}  (\mathbb{Z}_{(p)})^{(\beta_p)} \ \text{ and } \ A_{\mathcal Y} = \bigoplus_{p\in \mathcal Y} \mathbb{Z}(p^\infty)^{(\gamma_p)}.
$$
Now we claim the following:
\begin{claim}
$\dprk(A) = \max\{\dprk(A_{\Pp} \oplus A_\X) , \dprk(A_{\Pp} \oplus A_\Y)\}$.
\end{claim}
\begin{claimproof}
Let $\{\varphi_i(x)\}_{i<k}$ be a family of p.p. formulas defining subgroups such that $\{\varphi_i(A)\}_{i<k}$ witnesses an {\rm inp}-pattern of depth $k$ in $A$, and suppose that each formula $\varphi_i(x)$ is  
$$
\bigwedge_{j} (n_{i,j}|m_{i,j}x)\wedge (m_ix=0)
$$
with $0 \le m_{i,j} < n_{i,j}$ and $0\le m_i$. Assume that $\{\varphi_i(x)\}_{i<k}$ does not yield an inp-pattern of depth $k$ in $A_{\Pp} \oplus A_\Y$. Thus, there exists some index $i_0<k$ such that 
$$
\left[ \bigcap_{i\neq i_0} \varphi_i(A_{\Pp}\oplus A_\Y) : \bigcap_i \varphi_i(A_{\Pp}\oplus A_\Y) \right] < \omega.
$$
Hence, the subgroup $\bigcap_{i} \varphi_{i}(A_\Y)$ has finite index in $\bigcap_{i\neq i_0} \varphi_i(A_\Y)$. Note that $m_j=0$ for all $j<k$. Indeed, if there is some $j\neq i_0$ such that $m_j>0$ then $\varphi_j(A_\X)=\{0\}$, yielding that $\bigcap_i \varphi_i(A)$ has finite index in $\bigcap_{i\neq i_0} \varphi_i(A)$, a contradiction. Thus, for all $j\neq i_0$ we have that  $m_j=0$ and so $\varphi_j(A_\Y)=A_\Y$, since $A_\Y$ is divisible. Hence, if $m_{i_0}>0$ then we get
\[
\left[ \bigcap_{i\neq i_0} \varphi_i(A_\Y) : \bigcap_i \varphi_i(A_\Y) \right] = \omega,
\] 
a contradiction. Consequently, each group $\varphi_i(A_\Y)$ has  unbounded exponent and we thus have that $\varphi_i(A_\Y)=A_\Y$. Therefore, the family $\{\varphi_i(x)\}_{i<k}$ witnesses an inp-pattern of depth $k$  in $A_{\Pp}\oplus A_\X$, as desired.
\end{claimproof}

To finish the proof, it remains to compute the $\dprk$-rank of the two distinct subgroups given by the claim. For this, it is convenient to partition the  set $\Pp$ in three subsets. Set
\begin{itemize}
\item $\mathcal P_1$ to be the set of primes $p$ such that $\alpha_{p,n}\neq 0$ for infinitely $n$,
\item $\mathcal P_2$ the set of primes $p$ such that $\alpha_{p,n} =0$ for all but finitely many $n$ with at least one $\alpha_{p,n}=\omega$, and
\item $\mathcal P_3$ the collection of primes $p$ such that $\alpha_{p,n}$ is finite for every $n$ and that $\alpha_{p,n}=0$ for all but finitely many $n$.
\end{itemize} 
Similarly as before, we set $A_{\Pp_i}$ to denote the subgroup
$$
\bigoplus_{p\in\Pp_i}\bigoplus_{n>0} \mathbb Z(p^n)^{(\alpha_{p,n})}.
$$
Note that $\Pp_1$ is $\mathrm{U}_{\ge\aleph_0}(A)$, the set of primes for which $A$ has unbounded length, and so it is finite since $A$ has finite dp-rank. Similarly, the set $\Pp_2$ is also finite but $\Pp_3$ could be infinite. Furthermore, by Propositions \ref{P:BoundedExponent} and \ref{P:Torsion+UnbddLength}, to get the equation from the statement we can clearly now assume that either $\X$ or $\Y$ are non-empty. In other words, either the subgroup $A_\X$ or the subgroup  $A_\Y$ is infinite. As a consequence, by Lemma \ref{L:UnbddExp+Dpmin} we have that
$$
\dprk(A) = \dprk( A_{\Pp_1}\oplus A_{\Pp_2} \oplus A_{\X} \oplus A_\Y ),
$$
since any p.p.-definable subgroup of $A_{\Pp_3}$ either has finite index if it has unbounded exponent or it is finite. 
Hence, the set $\Pp_3$ is negligible and so we may further assume that it is empty. Now, we may distinguish two cases.

\noindent{\bf Case 1}. We compute the $\dprk$-rank of $A_{\Pp_1} \oplus A_{\Pp_2} \oplus A_\X$, assuming that $\X$ is non-empty. It is convenient to denote by $\X_{\ge\aleph_0}$ the set of primes $p$ in $\X$ such that $\beta_p=\omega$ and $A_{\X_{\ge\aleph_0} }$ the subgroup of $A_\X$ consisting only of summands with $\beta_p=\omega$. 

If the sets $\Pp_1$ and $\X_{\ge\aleph_0}$ are both empty, then applying Lemma \ref{L:UnbddExp+Dpmin} we get that
$$
\dprk(A_{\Pp_1} \oplus A_{\Pp_2} \oplus A_\X) = \dprk(A_{\Pp_2})+1.
$$ 
Thus, we may suppose that at least one of these sets is non-empty, in which case again by Lemma \ref{L:UnbddExp+Dpmin} we obtain 
$$
\dprk(A_{\Pp_1} \oplus A_{\Pp_2} \oplus A_\X) = \dprk(A_{\Pp_1} \oplus A_{\Pp_2} \oplus A_{\X_{\ge\aleph_0}}).
$$ 
Under this assumption, we find a suitable inp-pattern witnessing that 
$$
\dprk(A_{\Pp_1} \oplus A_{\Pp_2} \oplus A_{\X_{\ge \aleph_0}}) =\dprk(A_{\Pp_1}) + \dprk(A_{\Pp_2}) + \dprk(A_{\X_{\ge\aleph_0}}).
$$ 
Before we proceed to the proof, we note that in particular this yields that
\[\dprk(A_{\Pp_1} \oplus A_{\Pp_2}) =\dprk(A_{\Pp_1}) + \dprk(A_{\Pp_2}).\]

We first consider two p.p-definable subgroups. Let $r$ be the exponent of $A_{\Pp_2}$, and note that 
$$
r (A_{\Pp_1}  \oplus A_{\Pp_2} \oplus A_{\X_{\ge \aleph_0}}) = A_{\Pp_1}\oplus \{0\} \oplus r A_{\X_{\ge \aleph_0}}$$
and
$$
(A_{\Pp_1} \oplus A_{\Pp_2} \oplus A_{\X_{\ge \aleph_0}}) [r] = \{0\}\oplus A_{\Pp_2}\oplus \{0\},
$$ 
where the former equality holds since $\Pp_1\cap \Pp_2= \emptyset$ and the latter equality holds since $A_\X$ is torsion-free and $A_{\Pp_1}$ has no $r$-torsion. 

Let $\{\chi_i(x)\}_{i<k_2}$ be a family of p.p. formulas witnessing an {\rm inp}-pattern in $A_{\Pp_2}$ with $k_2=\dprk(A_{\Pp_2})$ and assume, as we may, that each $\chi_i(x)$ defines a subgroup and implies $rx=0$. Consider the family $\{\psi_i(x)\}_{i<k_\X}$ of p.p. formulas witnessing that $\dprk(A_{\X_{\ge\aleph_0}})=k_\X$ in a way that each $\psi_i(x)$ defines in $A_{\X_{\ge\aleph_0}}$ a subgroup of the form  $pr|x$ for $p\in\X_{\ge\aleph_0}$, which is possible by  Proposition \ref{P:T-FStrong} applied to $A_{\X_{\ge\aleph_0}}$ and noticing that $A_{\X_{\ge\aleph_0}}$ and $rA_{\X_{\ge\aleph_0}}$ are isomorphic. In particular, it then follows that $\psi_i(A_{\Pp_1})=A_{\Pp_1}$, since $\mathcal P_1\cap\X = \emptyset$, and that $\psi_i(A_{\X_{\ge\aleph_0}})$ is contained in $rA_{\X_{\ge\aleph_0}}$.

Finally, let $\{\varphi_i(x)\}_{i<k_1}$ be a family of p.p. formulas witnessing an {\rm inp}-pattern in $A_{\Pp_1}$ with $k_1=\dprk(A_{\Pp_1})$. As usual, we may assume that each $\varphi_i(x)$ defines a subgroup and furthermore, by Proposition \ref{P:Torsion+UnbddLength} we can take all these formulas yielding subgroups of unbounded exponent. Since $\Pp_1$ is disjoint from $\Pp_2\cup \X$, we may assume that the primes appearing in these formulas are co-prime to any prime in $\Pp_2\cup\X$. Thus $\varphi_i(A_{\Pp_2}) = A_{\Pp_2}$ and $ \varphi_i(A_{\X_{\ge\aleph_0}})=A_{\X_{\ge\aleph_0}}$. 

Now, consider the family of p.p.-definable subgroups of $A_{\Pp_1} \oplus A_{\Pp_2} \oplus A_{\X_{\ge\aleph_0}}$ formed by:
\begin{gather*}
\varphi_i(A_{\Pp_1})  \oplus A_{\Pp_2} \oplus A_{\X_{\ge\aleph_0}}  \text{ \ for $i<k_1$, \ } A_{\Pp_1} \oplus \chi_i(A_{\Pp_2}) \oplus r A_{\X_{\ge\aleph_0}}  \text{ \ for $i<k_2$}  \\
\text{and \ } A_{\Pp_1} \oplus A_{\Pp_2} \oplus \psi_i(A_{\X_{\ge\aleph_0}}) \text{ \ for $i<k_\X$}.
\end{gather*}
By construction, it then follows that this family yields an inp-pattern of depth $k_1+k_2+k_\X$.

\noindent{\bf Case 2}. To calculate the $\dprk$-rank of $A_{\Pp_1} \oplus A_{\Pp_2} \oplus A_\Y$ assuming that $\Y$ is non-empty, similarly as before, it is convenient to denote by $\Y_{\ge\aleph_0}$ the set of primes $p$ in $\Y$ such that $\gamma_p=\omega$ and $A_{\Y_{\ge\aleph_0} }$ the subgroup of $A_\Y$ consisting only of summands with $\gamma_p=\omega$. 

If the sets $\Pp_1$ and $\Y_{\ge\aleph_0}$ are both empty, then applying Lemma \ref{L:UnbddExp+Dpmin} we get that
$$
\dprk(A_{\Pp_1} \oplus A_{\Pp_2} \oplus A_\Y) = \dprk(A_{\Pp_2})+1.
$$ 
Thus, we may suppose that at least one of these sets is non-empty, in which case again by Lemma \ref{L:UnbddExp+Dpmin} we obtain 
$$
\dprk(A_{\Pp_1} \oplus A_{\Pp_2} \oplus A_\Y) = \dprk(A_{\Pp_1} \oplus A_{\Pp_2} \oplus A_{\Y_{\ge\aleph_0}}).
$$ 
Under this assumption, we find a suitable inp-pattern witnessing that 
$$
\dprk(A_{\Pp_1} \oplus A_{\Pp_2} \oplus A_\Y) =\dprk(A_{\Pp_1}) + \dprk(A_{\Pp_2}) + \dprk(A_{\Y_{\ge\aleph_0}}).
$$ 
To do so, as before, set $r$ to be the exponent of $A_{\Pp_2}$ and note then that this time we have, since $\Pp_1\cap \Pp_2 =\emptyset$,
$$
r (A_{\Pp_1} \oplus A_{\Pp_2} \oplus A_{\Y_{\ge\aleph_0}}) = A_{\Pp_1} \oplus \{0\} \oplus A_{\Y_{\ge\aleph_0}}
$$
and 
$$(A_{\Pp_1} \oplus A_{\Pp_2} \oplus A_{\Y_{\ge\aleph_0}}) [r] = \{0\} \oplus  A_{\Pp_2} \oplus A_{\Y_{\ge\aleph_0}}[r],
$$
since {\it a priori} the sets $\Pp_2$ and $\Y_{\ge\aleph_0}$ may have elements in common. Let $\{\chi_i(x)\}_{i<k_2}$ be a family of p.p. formulas witnessing an {\rm inp}-pattern in $A_{\Pp_2}$ with $k_2=\dprk(A_{\Pp_2})$ and assume, as we may, that each $\chi_i(x)$ defines a subgroup and implies $rx=0$. Consider the family $\{\psi_i(x)\}_{i<k_\Y}$ of p.p. formulas witnessing that $\dprk(A_{\Y_{\ge\aleph_0}})=k_\Y$ in a way that each $\psi_i(x)$ defines in $A_{\Y_{\ge\aleph_0}}$ a subgroup of exponent $m_i>0$. More precisely, by Proposition \ref{P:Divisible+Torsion} we know that if $p_0,\ldots,p_{k_\Y -1}$ are all primes in ${\Y_{\ge\aleph_0}}$, then $m_i$ can be taken to be the product of all $p_j$ for $j\neq i$ and so 
$$
\psi_i(A_{\Y_{\ge\aleph_0}}) = \bigoplus_{j\neq i} \mathbb Z(p^{\infty})^{(\omega)}[p_j].
$$ 
In particular, it then follows that $\psi_i(A_{\Pp_1})=\{0\}$, since $\Pp_1\cap \Y=\emptyset$. Finally, let $\{\varphi_i(x)\}_{i<k_1}$ be a family of p.p. formulas witnessing an {\rm inp}-pattern in $A_{\Pp_1}$ with $k_1=\dprk(A_{\Pp_1})$. As usual, we may assume that each $\varphi_i(x)$ defines a subgroup and furthermore, by Proposition \ref{P:Torsion+UnbddLength} we can take all these formulas yielding subgroups of finite exponent. Since $\Pp_1$ is disjoint from $\Pp_2\cup \Y$, we may assume that these exponents are co-prime to any prime in $\Pp_2\cup\Y$. Thus $\varphi_i(A_{\Pp_2}) = \varphi_i(A_{\Y_{\ge\aleph_0}})=\{0\}$. 
Now, consider the family of p.p.-definable subgroups of $A_{\Pp_1} \oplus A_{\Pp_2} \oplus A_{\Y_{\ge\aleph_0}}$ formed by:
\begin{gather*}
\varphi_i(A_{\Pp_1})  \oplus A_{\Pp_2} \oplus \left( A_{\Y_{\ge\aleph_0}}[r] +\sum_{i<k_1} \psi_i(A_{\Y_{\ge\aleph_0}}) \right) \text{ \ for $i<k_1$, \ } \\ A_{\Pp_1} \oplus \chi_i(A_{\Pp_2}) \oplus A_{\Y_{\ge\aleph_0}}  \text{ \ for $i<k_2$ \ } 
\text{and \ }  \\ 
\left( \sum_{i<k_1}\varphi_i(A_{\Pp_1}) \right) \oplus A_{\Pp_2} \oplus \left( A_{\Y_{\ge\aleph_0}}[r] + \psi_i(A_{\Y_{\ge\aleph_0}}) \right) \text{ \ for $i<k_\Y$}.
\end{gather*}
By construction, it then follows that this family yields an inp-pattern of depth $k_1+k_2+k_\Y$.

Therefore, putting all this together we finish the proof. More precisely, combining these two cases with the previous claim, the equations from $(3)$ and $(4)$ are obtained by considering the cases when $\Pp_1$ is empty or not. This corresponds to, after assuming that $\delta = 0$, the group $A$ having finite $p$-length for every prime $p$ or not, respectively.
\end{proof}

To obtain Theorem \ref{T:Main} it suffices to apply the results of Section 4. Namely, given an abelian group $A$, the sets $\mathrm{Tf}_{\ge \aleph_0}(A), \mathrm{D}_{\ge\aleph_0}(A), \mathrm{U}_{\ge \aleph_0}(p;A)$ and $\mathrm{U}_{\ge\aleph_0}(A)$ are preserved under elementarily equivalence, and also is its $\dprk$-rank. Hence, Theorem \ref{T:Main} follows from the previous result using Szmielew's Theorem and Propositions \ref{P:T-FStrong}, \ref{P:Divisible+Torsion}, \ref{P:BoundedExponent} and \ref{P:Torsion+UnbddLength}.

\section{Abelian groups with additional structure}\label{s:last}

The situation is drastically distinct if we allow some extra structure. In this last section we provide an example of a divisible torsion-free abelian group with additional structure whose theory is $\omega$-stable but does not have finite {\rm dp}-rank, exemplifying the relevance of working in the pure language of groups along the paper. Furthermore, we finish the section and the paper by showing that infinite stable fields of finite $\dprk$-rank are algebraically closed and so tame model-theoretically.

\subsection{Abelian structure} Consider the $\dprk$-minimal group $\mathbb Q^{(\omega)}$, seen as $\bigoplus_{i\in \mathbb N} G_i$ where each $G_i$ is an isomorphic copy of $\mathbb Q$. 

Let $\mathcal P$ be the collection of all finite subsets of $\mathbb N$. For every finite, possibly empty, subset $I$ of $\mathbb{N}$, let $H_I$ be the subgroup of $G$ whose elements consist of $0$ everywhere except in the coordinates from $I$. Note that each $H_I$ is isomorphic to $\bigoplus_{i\in I} G_i$.  Now, consider the abelian structure $\mathcal G = (G,+,(H_I)_{I\in \mathcal P})$, which as remarked before is one-based.

\begin{proposition}
The structure $\mathcal G= (G,+,(H_I)_{I\in \mathcal P})$ is $\omega$-stable of Morley rank $\omega$ and does not have finite {\rm dp}-rank.
\end{proposition}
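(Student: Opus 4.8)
The plan is to read off the three assertions from an analysis of the p.p.-definable subgroups of $\mathcal G$. First I would record that, as an abelian structure, $\mathcal G$ is one-based and stable (as recalled in Section \ref{s:dprk,vcden}), so that Baur--Monk applies: every definable subset of each $G^m$ is a boolean combination of cosets of p.p.-definable subgroups. The key structural observation is that, since $G=\bigoplus_i G_i$ is divisible and torsion-free and the only predicates are the $H_I$ with $I$ finite, the p.p.-definable subgroups of $G$ in one variable are exactly the groups $H_I$ for $I\subseteq\mathbb N$ finite, together with $G$ itself, with meets and joins corresponding to $I\cap J$ and $I\cup J$. I would check this directly from the shape of p.p.\ formulas: divisibility trivialises every divisibility condition, and the auxiliary existential variables only ever produce further $H_I$'s.

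For the Morley rank and $\omega$-stability I would compute ranks explicitly. Each $H_{\{i\}}\cong\mathbb Q$ carries only the induced pure divisible torsion-free group structure, hence is strongly minimal and $\rk(H_{\{i\}})=1$; consequently $H_I$ is a $\mathbb Q$-vector space of dimension $|I|$ with named coordinate lines, so $\rk(H_I)=|I|$. As $G=\bigcup_N H_{\{1,\dots,N\}}$ is an increasing union of these, $\rk(G)\ge\omega$. For the reverse inequality I would use the classification above: any definable $D\subseteq G$ is a boolean combination of cosets of the $H_I$ and of $G$, and every atom containing a positive coset $a+H_I$ lies inside $a+H_I$, so has rank $\le|I|<\omega$; the only remaining atom is $G$ minus finitely many such cosets, i.e.\ $G$ up to a set of rank $<\omega$. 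Two disjoint definable sets cannot both contain such an atom, so at most one member of any family of pairwise disjoint definable subsets of $G$ has rank $\ge\omega$. This yields $\rk(G)=\omega$ of Morley degree $1$, and the same bookkeeping shows every definable subset of $G$ has ordinal rank.

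For the final assertion I would apply Proposition \ref{P:inpattern-in-1based} (equivalently Corollary \ref{C:breadth=dp}): since $\mathcal G$ is one-based, it suffices to exhibit, for each $N$, finitely many $\acl^{\mathrm{eq}}(\emptyset)$-definable subgroups whose total intersection has infinite index in every sub-intersection obtained by deleting one of them. I would take the $N$ groups $H_{\{1,\dots,N\}\setminus\{i\}}$ for $i=1,\dots,N$. Their full intersection is $H_\emptyset=\{0\}$, whereas deleting the $i_0$-th one leaves $\bigcap_{i\neq i_0}H_{\{1,\dots,N\}\setminus\{i\}}=H_{\{i_0\}}=G_{i_0}$, which is infinite; hence these groups witness an inp-pattern of depth $N$. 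As $N$ is arbitrary, $\mathcal G$ admits inp-patterns of every finite depth, so the breadth of its commensurability semilattice does not exist and its dp-rank is infinite.

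The main obstacle I anticipate is the passage to several variables needed for genuine $\omega$-stability, together with the upper bound $\rk(G)\le\omega$. One must ensure that the p.p.-definable subgroups of each $G^m$ remain governed by finite combinatorial data, so that the descending chain condition holds in all arities and the abelian-structure analogue of the criterion ``$\omega$-stable iff minimal chain condition on p.p.-definable subgroups'' applies; and one must check that no diagonal subgroup of infinite corank --- such as a single-coordinate hyperplane $\{x:x_n=0\}$ --- is definable. I would handle the latter by a coset-counting argument: a boolean combination of finitely many cosets of the $H_I$ can constrain only finitely many coordinates off a fixed finite set, so it cannot cut out such a hyperplane. This both secures the rank bound and explains why the infinite dp-rank is detected by the $H_I$ alone rather than by hyperplanes.
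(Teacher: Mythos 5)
Your route is the same as the paper's: classify the p.p.-definable subgroups of $\mathcal G$ as the $H_I$ together with $G$, deduce that every definable subset of $G$ is a boolean combination of cosets of finite-support subgroups, read off Morley rank $\omega$ from the atoms, and get inp-patterns of unbounded depth from Proposition \ref{P:inpattern-in-1based}. Your witnesses $H_{\{1,\dots,N\}\setminus\{i\}}$, $i=1,\dots,N$, are exactly the paper's subgroups $\bigoplus_{k\neq j}H_{i_k}$, and that step, as well as the rank bookkeeping (rank $|I|$ on cosets of $H_I$, degree one at the generic), is correct as you state it.

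The genuine gap is the step you treat as routine: the classification itself. Saying that ``divisibility trivialises every divisibility condition, and the auxiliary existential variables only ever produce further $H_I$'s'' restates the claim rather than proving it, and your later coset-counting exclusion of hyperplanes is circular, since it already assumes that definable sets are boolean combinations of cosets of the $H_I$ --- which is precisely what is in question. The point is not cosmetic: if a subgroup such as $\{x : x_n=0\}$ or $\{x : x_1=x_2\}$ were p.p.-definable, one would obtain an infinite descending chain of definable subgroups, each of infinite index in its predecessor, so the theory would fail to be superstable and the first assertion of the proposition would be false; everything hinges on ruling this out. The missing argument (the paper's Claim) goes as follows: by \cite[Theorem A.1.1]{Hodges} a p.p. formula has the form $\exists \bar y \bigwedge_{I\in\mathcal P_0}\left( \mu_I x + \sum_i \lambda_{I,i} y_i \in H_I\right)$ with $\mathcal P_0$ finite; purity (divisibility) of the $H_I$ lets one normalise $\mu_I=1$ at the cost of rational coefficients; then membership of $x=(x_j)_j$ decomposes coordinate-wise into solvability of a linear system over $\mathbb Q$, in which the $H_I$-contribution at coordinate $j$ is forced to vanish whenever $j\notin I$. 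For every coordinate $j$ outside the finite set $\bigcup\mathcal P_0$ this system is literally the same one --- a column-space condition on the matrix $(\lambda_{I,i})$ not depending on $j$ --- so it cuts out either $\{0\}$ or $\mathbb Q$ uniformly in all such coordinates, and each coordinate inside $\bigcup\mathcal P_0$ sees only a subsystem of it. This uniformity is what forces the defined subgroup to be $G$ or some $H_I$ with $I\subseteq\bigcup\mathcal P_0$, and it is the one idea your sketch is missing.
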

\begin{proof}
We first show that every p.p.-definable subgroup of $G$, in the given structure, is one of the $H_I$. For this, since $\mathcal G$ is a group with some additional structure, we obtain by \cite[Theorem A.1.1]{Hodges} that every p.p.-formula $\varphi(x_0,\dots,x_n)$ is equivalent to a formula of the form
$$
\exists \bar{y} \bigwedge_{I\in \mathcal P_0} \left( \sum_{i=0}^n \mu_{I,i} x_i +\sum_{i=1}^{|\bar y|} \lambda_{I,i} y_i \in H_I\right),
$$
where $\mathcal P_0$ is a finite subset of $\mathcal P$ and the coefficients $\mu_{I,i}$ and $\lambda_{I,i}$ are integers. Thus we would like to understand formulas of the form
$$
\exists \bar{y} \bigwedge_{I\in \mathcal P_0} \left( \mu_{I} x + b_I +\sum_{i=1}^{|\bar y|} \lambda_{I,i} y_i \in H_I\right),
$$
where each $b_I$ is an element of $\mathbb Q$.
Since the $H_I$ are pure subgroups ({\it i.e.} if whenever an element of $H_I$ has an $n$th root in $G$, it necessarily has an $n$th root in $H_I$), if we allow $\lambda_{I,i}\in \mathbb{Q}$ then the above formula becomes:
$$
\exists\bar{y} \bigwedge_{I\in \mathcal P_0} \left( x+b_I+\sum_{i=1}^{|\bar y|} \lambda_{I,i} y_i \in H_I \right).
$$
Furthermore, if the formula is consistent, then it defines a coset of the subgroup defined by the formula 
$$
\exists\bar{y} \bigwedge_{I\in \mathcal P_0} \left( x+\sum_{i=1}^{|\bar y|} \lambda_{I,i} y_i \in H_I \right).
$$

\begin{claim}
A subgroup definable by a formula of the form 
$$
\exists\bar{y} \bigwedge_{I\in \mathcal P_0} \left( x+\sum_{i=1}^{|\bar y|} \lambda_{I,i} y_i \in H_I\right)
$$
is either all of $G$ or one of the finite support subgroups.
\end{claim}
\begin{claimproof}
Suppose that $\mathcal P_0=\{I_1,\dots, I_n\}$ and $|\bar y|=m$. Denote by $x=(x_j)_{j\in\mathbb N}$ elements of $G$ and note that $(x_j)_{j\in\mathbb N}$ satisfies the given formula if and only if for every $j\in \mathbb{N}$ there are $h_{I_k,j}\in H_{I_k}$ with $h_{I_k,j}=0$ whenever $j\notin I_k$ such that the following equation has a solution
$$
\left(\begin{array}{c} h_{I_1,j} \\ \vdots \\ h_{I_n,j}\end{array}\right)  +      
\left(
\begin{array}{ccc}
\lambda_{I_1,1} &\dots & \lambda_{I_1,m} \\
\vdots & \ddots & \vdots \\
\lambda_{I_n,1} &\dots & \lambda_{I_n,m}
\end{array} \right) \left(\begin{array}{c} y_{1} \\ \vdots \\ y_{m} \end{array}\right) = 
\left(\begin{array}{c} x_{j}  \\ \vdots \\ x_{j}  \end{array}\right),
$$ 
 Thus, this system of equations in $x_j$ yields a subgroup of $\mathbb Q$ which is definable in the pure language of groups. Hence, it defines the trivial subgroup $\{0\}$ or $\mathbb{Q}$. 

Now, since the elements of $G$ have finite support, to argue that the given formula defines $G$ or one of the finite support subgroups, it suffices to analyze the above system of equations when $h_{I_k,j} = 0$ for every $k,j$. In that case, note that this system of equations on $x_j$ defines $\mathbb Q$ if and only if there is a non-zero element $x_j$ such that the column given by $x_j$ is in the column space of the matrix given by the coefficients $\lambda_{I_k,i}$, which do not depend on the coordinate $j$. Therefore, the above system of equations on $x_j$ defines the same subgroup for every index $j$ with $h_{I_k,j} = 0$, either $\{0\}$  or $\mathbb Q$. This yields the result.
\end{claimproof}

As a consequence, any definable subset of $G$ is a boolean combination of cosets of subgroups of finite support. Thus, any type is determined by the minimal coset where it is concentrated on. Hence, one can see that the theory of $\mathcal G$ has Morley rank $\omega$. Indeed,  the generic type has Morley rank $\omega$ and a type determined by the coset of $H_I$, say, has Morley rank $|I|$.

Finally, by considering a finite number of subgroups $H_{i_0},\ldots,H_{i_n}$, we obtain by Proposition \ref{P:inpattern-in-1based} that the subgroups $\bigoplus_{k\neq j} H_{i_k}$ for $j\le n$ give an {\rm inp}-pattern of depth $n+1$. Therefore, the structure $\mathcal G$ does not have finite {\rm dp}-rank.  
\end{proof}

\subsection{Fields} Concerning fields, it has been conjectured by Shelah that every strong stable field is algebraically closed. We now see that stable fields of finite $\dprk$-rank are algebraically closed.  Furthermore, it is worth noticing in contrast with Corollary \ref{C:TF} that in the pure language of fields the generic type does not control the dp-rank. For instance, in a separably closed field of infinite Ershov invariant the generic type has weight $1$ but the theory is not even strong. 

\begin{proposition}\label{P:Fields}
An infinite stable division ring of finite {\rm dp}-rank is an algebraically closed field.
\end{proposition}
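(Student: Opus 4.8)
The plan is to settle the commutative case first and then reduce the general division ring to it. Accordingly, I would begin with an infinite stable field $K$ of finite $\dprk$-rank and show it is algebraically closed, and only afterwards handle a noncommutative division ring by passing to its centre.

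For the field case the cleanest route is through the structure theory of fields of finite $\dprk$-rank: such a field is either finite, or algebraically closed, or real closed, or it admits a nontrivial $\emptyset$-definable henselian valuation. Now $K$ is infinite, so the first option is out; a real closed field defines a linear order and is therefore unstable; and a field with a nontrivial definable valuation interprets its value group, a nontrivial ordered abelian group, hence an infinite ordered set, so it too is unstable. Since $K$ is stable, the only surviving possibility is that $K$ is algebraically closed.

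It remains to reduce a division ring $D$ to this case. I would first argue that finite $\dprk$-rank forces $D$ to be finite-dimensional over its centre $Z=Z(D)$ --- otherwise the definable family of centralizers $C_D(a)$, or of suitable additive subgroups, would yield an inp-pattern of unbounded depth, contradicting finiteness of the $\dprk$-rank (recall that finite $\dprk$-rank is equivalent to the absence of an infinite inp-pattern). Since $D$ is infinite, $Z$ is then an infinite definable subfield, again stable of finite $\dprk$-rank, so by the field case $Z$ is algebraically closed; and a finite-dimensional division algebra over an algebraically closed field equals that field, the Brauer group of an algebraically closed field being trivial. Hence $D=Z$ is an algebraically closed field.

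The main obstacle is really this commutativity/finite-dimensionality reduction, which for division rings that are only assumed stable is delicate and is precisely where the finite-$\dprk$-rank hypothesis (or a cited structural theorem on division rings of finite $\dprk$-rank) does genuine work. The field case itself, when the classification is unwound, rests on two points: that an infinite NIP field is Artin--Schreier closed (Kaplan--Scanlon--Wagner), which disposes of degree-$p$ separable extensions in characteristic $p$, and that finite $\dprk$-rank forces $K$ to be perfect, i.e. of Ershov invariant $0$ (a separably closed perfect field is algebraically closed, since over a perfect field every algebraic extension is separable). The perfectness step is the one that exploits the $\dprk$-rank rather than merely stability or the weight of the generic type: as the remark above records, a separably closed field of positive Ershov invariant is stable and has a weight-one generic type yet is not algebraically closed, so one must use the $\dprk$-rank itself --- concretely, that the iterated-Frobenius coordinate functions of an imperfect field produce an infinite inp-pattern --- to exclude it. Verifying this inp-pattern for \emph{finite} imperfection degree is the technical crux of the hands-on route, a difficulty that the classification-of-fields-of-finite-$\dprk$-rank route simply packages into the cited theorem.
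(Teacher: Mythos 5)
Your reduction from division rings to the field case contains a genuine gap, and it sits exactly where you yourself locate ``the main obstacle.'' The inp-pattern argument you sketch for finite-dimensionality over the centre $Z=Z(D)$ only works if $Z$ is infinite: the natural witnesses are the definable additive subgroups $H_i=\sum_{j\neq i}Za_j$ for $Z$-independent $a_1,\dots,a_n$, and the relevant indices are
$$
\left[\ \bigcap_{i\neq i_0}H_i \ : \ \bigcap_{i}H_i\ \right]=\left|Za_{i_0}\right|=|Z|,
$$
so a finite centre produces no inp-pattern at all. In characteristic $0$ one has $\mathbb{Q}\subseteq Z$, but in characteristic $p$ nothing you have said excludes a finite centre a priori, and you deduce that $Z$ is infinite only \emph{after} finite-dimensionality --- the argument is circular precisely in the case that needs it. The centralizer variant is never developed, and the ``cited structural theorem on division rings of finite dp-rank'' that you gesture at is not actually cited; since commutativity is part of what the proposition asserts, this cannot be waved through. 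Note also that the paper never splits into commutative and noncommutative cases, so this reduction is not a step you can borrow from it.

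The field half of your argument is correct, but only as an appeal to Johnson's classification of dp-finite fields, a theorem vastly deeper than the statement being proved (and not available when this paper was written); ruling out the real closed and definably-valued cases by stability is then immediate. Your ``unwinding'' of that classification, however, is not a proof sketch: Artin--Schreier closedness plus perfectness do not yield separable closedness (they say nothing about prime-to-$p$ or characteristic-$0$ extensions), and in any case perfectness is the easy part --- if $b_1,\dots,b_d$ is a basis of $K$ over $K^p$ with $d\geq 2$, the definable bijection $(x_1,\dots,x_d)\mapsto\sum_i b_ix_i^p$ from $K^d$ to $K$ contradicts finite dp-rank by additivity of the dp-rank --- so the ``crux'' is not where you place it. The paper's actual proof is entirely different and treats $D$ uniformly: it fixes a stationary type of ${\rm U}$-rank one, notes that finite sums of multiplicative translates of its realization set have ${\rm U}$-rank equal to weight, hence bounded by the dp-rank (this is the only place finiteness of the dp-rank is used), and then applies the Conant--Pillay version of Zilber's indecomposability theorem to generate a type-definable ideal of finite ${\rm U}$-rank, which must be all of $D$. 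Thus $D$ is superstable of finite ${\rm U}$-rank, and Cherlin--Shelah gives commutativity and algebraic closure in one stroke, with no classification of dp-finite fields and no separate commutativity step.
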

\begin{proof} Let $(D,+,\times)$ be an infinite stable division ring and assume that it has finite {\rm dp}-rank. By a result of Cherlin and Shelah \cite{CherShe}, it suffices to show that it has finite {\rm U}-rank. To do so, fix some stationary complete type $p$ concentrated on $D$ of {\rm U}-rank one; for instance apply Zorn's Lemma to find a nonalgebraic type (over a model) all whose forking extensions are algebraic. Let $X$ be the set of its realizations, which is indecomposable subset of $D^+$ by \cite[Proposition 2.12]{ConPil}, and fix some element $a$ from $X$. Set $Y=X-a$ and note that $Y$ contains the identity element from $D^+$ and is also indecomposable. Furthermore, it has \rm{U}-rank one since the \rm{U}-rank is preserved under definable bijections.

Now, observe that any left multiplicative translate of $Y$ is indecomposable, since multiplication on the left yields an additive automorphism, and has also ${\rm U}$-rank one. In particular, any set of the form $b_1Y+\ldots + b_nY$ has finite ${\rm U}$-rank by the Lascar inequalities. In fact, since $b_1Y_1+\dots +b_nY_n$ is algebraic over $\bigcup_{i=1}^n b_iY_i$, by \cite[Proposition 2.5]{ConPil}, for any type $q$ concentrated on $b_1Y+\ldots + b_nY$ we have that $\mathrm{U}(q) = wt(q)$. As a result, the $\mathrm{U}$-rank of the set $b_1Y+\ldots + b_nY$ is bounded above by the $\dprk$-rank of $D$. 

By a suitable version Zilber's Indecomposable Theorem, see \cite[Fact 2.13]{ConPil}, the family of all $D^\times$-translates of $Y$ generates an infinite type-definable connected subgroup $H$ of $D^+$ which has finite ${\rm U}$-rank. In particular,  note that $H$ is $D^\times$-invariant and hence an ideal of $D$. Therefore, we have that $H=D$ and so $D$ has finite {\rm U}-rank, as desired. 
\end{proof}

\end{document}